\theoremstyle{plain}
\newtheorem{theo}{Theorem}[section]
\newtheorem{prop}[theo]{Proposition}
\newtheorem{lemm}[theo]{Lemma}
\newtheorem{coro}[theo]{Corollary}
\newtheorem{defi}[theo]{Definition}
\theoremstyle{definition}
\newtheorem{rema}[theo]{Remark}
\newtheorem{nota}[theo]{Notation}
\DeclareMathOperator{\cnx}{div}
\DeclareMathOperator{\Op}{Op}
\DeclareMathOperator{\supp}{supp}
\DeclareSymbolFont{pletters}{OT1}{cmr}{m}{sl}
\DeclareMathSymbol{s}{\mathalpha}{pletters}{`s}
\def\lDx#1{\langle D_x\rangle^{#1}\,}
\def\B{B }
\def\Cr{\mathcal{C}}
\def\defn{\mathrel{:=}}
\def\eps{\varepsilon}
\def\la{\left\vert}
\def\lA{\left\Vert}
\def\le{\leq}
\def\les{\lesssim}
\def\leo{}
\def\mez{\frac{1}{2}}
\def\ra{\right\vert}
\def\rA{\right\Vert}
\def\tdm{\frac{3}{2}}
\def\xC{\mathbf{C}}
\def\xN{\mathbf{N}}
\def\xR{\mathbf{R}}
\def\xZ{\mathbf{Z}}
\def\cF{ \mathcal{F}}
\def\lb{\left[}
\def\lB{\left\{}
\def\lp{\left(}
\def\rb{\right]}
\def\rB{\right\}}
\def\rp{\right)}
\def\tph{{\widetilde\varphi}}
\def\ld{\lambda}
\def\gs{\gtrsim}
\def\th{\widetilde{h}}
\newcommand{\bq}{\begin{equation}}
\newcommand{\eq}{\end{equation}}
\newcommand{\bqa}{\begin{eqnarray*}}
\newcommand{\eqa}{\end{eqnarray*}}
\newcommand{\hk}{\hspace*{.15in}}
\numberwithin{equation}{section}
\title{Sharp Strichartz estimates for water waves systems}
\author{
Huy Quang Nguyen
\address{
Huy Quang Nguyen. Program in Applied $\&$ Computational Mathematics, Princeton University, Princeton, NJ 08544.
}
\email{qn@math.princeton.edu}
}
\begin{document}
\begin{abstract}
Water waves are well-known to be dispersive at the linearization level. Considering the fully nonlinear systems,  we prove for reasonably smooth solutions the optimal  Strichartz estimates for pure gravity waves and the semi-classical Strichartz estimates for gravity-capillary waves; for both 2D and 3D waves. Here, by optimal we mean the gains of regularity (over the Sobolev embedding from Sobolev spaces to H\"older spaces) obtained for the linearized systems. Our proofs combine the paradifferential  reductions of Alazard-Burq-Zuily \cite{ABZ1, ABZ3} with a dispersive estimate using a localized wave package type parametrix of Koch-Tataru \cite{KT}.
\end{abstract}
\thanks{The author was supported in part by Agence Nationale de la Recherche
  project  ANA\'E ANR-13-BS01-0010-03.}
\maketitle
\section{Introduction}
\hk Water waves systems govern the dynamic of an  interface between a fluid domain and the vacuum. It is well-known that these systems are dispersive, {\it i.e.}, waves at different frequency propagate at different speed. For approximate models of water waves in certain regimes  such as Kadomtsev-Petviashvili  equations, Korteweg-de Vries  equations, Schrodinger equations, wave equations, {\it etc}, dispersive properties have been extensively studied. For the fully nonlinear system of water waves, dispersive properties are however less understood. \\
\hk On the one hand, in  global dynamic, dispersive properties have been considered in establishing the existence of global (or almost global) solutions for small, localized, smooth  data by the works of Wu \cite{Wu09, Wu11}, Germain-Masmoudi-Shatah \cite{GMS, GMS1}, Ionescu-Pusateri \cite{IoPu, IoPu1}, Alazard-Delort \cite{AD}, Ifrim-Tataru \cite{IfTa, IfTa2}. On the other hand, in local dynamic, dispersive properties and more precisely Strichartz estimates have been exploited in proving the existence of local-in-time solutions with rough, generic data,  initiated by the work of Alazard-Burq-Zuily \cite{ABZ4} and then followed by de Poyferr\'e-Nguyen \cite{NgPo1, NgPo2}, Nguyen \cite{Ng}. Prior to these, a Strichartz estimate was proved for 2D gravity-capillary waves by  Christianson-Hur-Staffilani \cite{CHS}. Unlike the case of semilinear Schrodinger (wave) equations, water waves systems are quasilinear in nature and thus how much regularity one can gain in Strichartz estimates depends also on the smoothness of solutions under consideration. In other words, in term of dispersive analysis (for generic solutions), the nonlinear systems are not obviously dictated by their linearizations. In fact, the Strichartz estimates proved in \cite{ABZ4, NgPo2, Ng} are non optimal compared to the linearized systems. We address in this paper the following problem:\\
{\it At which level of regularity, solutions to the fully nonlinear systems of water waves obey the same Strichartz estimates as their linearizations?}\\
Remark first that due to the systematic use of symbolic calculus in the framework of semi-classical analysis in \cite{ABZ4, NgPo2, Ng} we were not able to reach sharp Strichartz estimates by simply adapting the method there to the case of sufficiently smooth solutions.\\
In this paper, we choose to work on the Zakharov-Craig-Sulem formulation of water waves, which is recalled now.
\subsection{The Zakharov-Craig-Sulem formulation of water waves}
We consider an incompressible inviscid fluid with unit density moving in a time-dependent domain  
$$
\Omega = \{(t,x,y) \in[0,T] \times \xR^d \times \xR:(x, y)\in \Omega_t\} 
$$
where each $\Omega_t$ is a domain located underneath a free surface 
$$
\Sigma_t = \{(x,y)  \in \xR^d \times \xR: y=\eta(t, x)\} 
$$
and above a fixed bottom $\Gamma=\partial\Omega_t\setminus \Sigma_t$. We make the following assumption on the domain: {\it Assumption $(H)$\\
Each $\Omega_t$ is the intersection of the half space 
\[
\Omega_{1,t}= \{(x,y)  \in\xR^d \times \xR: y<\eta(t, x)\} 
\]
and an open connected set $\Omega_2$ containing a fixed strip around $\Sigma_t$, {\it i.e.}, there exists $h>0$ such that  for all $t\in [0, T]$
\[
 \{(x,y)  \in \xR^d \times \xR: \eta(x)-h\le y\le\eta(t, x)\} \subset \Omega_2.
\]
}
 Assume that the velocity field $v$ admits a potential $\phi:\Omega \to \xR$, {\it i.e.}, $v=\nabla \phi$. Using the Zakharov formulation, we introduce the trace of $\phi$ on the free surface
$$\psi(t,x)= \phi(t,x,\eta(t,x)).$$ 
 Then $\phi(t, x, y)$ is the unique variational solution of 
\bq\label{phi}
\Delta\phi =0\text{~in}~\Omega_t,\quad \phi(t, x, \eta(t, x))=\psi(t, x).
\eq
The Dirichlet-Neumann operator is then defined by
\begin{align*}
G(\eta) \psi &= \sqrt{1 + \vert \nabla_x \eta \vert ^2}
\Big( \frac{\partial \phi}{\partial n} \Big \arrowvert_{\Sigma}\Big)\\
&= (\partial_y \phi)(t,x,\eta(t,x)) - \nabla_x \eta(t,x) \cdot(\nabla_x \phi)(t,x,\eta(t,x)).
\end{align*}
The {\it gravity-capillary waves} (see \cite{LannesLivre}) problem  consists in solving the following system of  $(\eta,\psi)$:
\begin{equation}\label{ww:0}
\left\{
\begin{aligned}
&\partial_t \eta = G(\eta) \psi,\\
&\partial_t \psi + g\eta+\sigma H(\eta)+\mez \vert \nabla_x \psi \vert^2 - \mez \frac{(\nabla_x \eta \cdot \nabla_x \psi + G(\eta)\psi)^2}{1+ \vert \nabla_x \eta \vert^2}=0,
\end{aligned}
\right.
\end{equation}
where $\sigma$ is the surface tension coefficient and $H(\eta)$ is the mean curvature of the free surface:
\[
H(\eta)=-\cnx\left( \frac{\nabla\eta}{\sqrt{1+|\nabla\eta|^2}}\right).
\]
In the regime of large wavelengths, one can discard the effect of surface tension by taking $\sigma=0$ in the system \eqref{ww} to obtain the system of {\it pure gravity water waves}
\begin{equation}\label{ww}
\left\{
\begin{aligned}
&\partial_t \eta = G(\eta) \psi,\\
&\partial_t \psi + g\eta+\mez \vert \nabla_x \psi \vert^2 - \mez \frac{(\nabla_x \eta \cdot \nabla_x \psi + G(\eta)\psi)^2}{1+ \vert \nabla_x \eta \vert^2}=0,
\end{aligned}
\right.
\end{equation}
The physical dimensions are $d=1, 2$. For terminologies, when $d=1$ (respectively $d=2$) we call \eqref{ww:0}, \eqref{ww} the 2D (respectively 3D)  waves systems. It is important to introduce the vertical and horizontal components of the trace of the velocity on $\Sigma$, which can be expressed in terms of $\eta$ and $\psi$:
\begin{equation}\label{BV}
B = (v_y)\arrowvert_\Sigma = \frac{ \nabla_x \eta \cdot \nabla_x \psi + G(\eta)\psi} {1+ \vert \nabla_x \eta \vert^2},\quad V= (v_x)\arrowvert_\Sigma  =\nabla_x \psi - B \nabla_x \eta.
 \end{equation}
 We recall also that the Taylor coefficient defined by $a = -\frac{\partial P}{\partial y}\big\arrowvert_\Sigma$ can be defined in terms of $\eta,\psi,B,V$ only (see \S 4.2 in \cite{ABZ3}).
\subsection{Known results and main theorems}
\subsubsection{Pure gravity water waves}
For the system \eqref{ww} of pure gravity water waves, the only existent Strichartz estimate, to our knowledge, is \cite{ABZ4} where the authors proved Strichartz estimates for rough solutions with a gain of 
\bq\label{gain:ABZ4}
\mu=\frac{1}{24}-~\text{when}~d=1,\quad \mu=\frac{1}{12}-~\text{when}~d\ge 2
\eq
The starting point of this result is the symmetrization of \eqref{ww:0} into a quasilinear paradifferential equation of the following form (see Appendix \ref{appendix} for the paradifferential calculus theory and Theorem \ref{theo:sym} below for a precise reduction statement)
\bq\label{eq:sym:0}
\lp\partial_t +T_V\cdot\nabla +iT_\gamma \rp u=f\in L^\infty_t H^s_x,~s>1+\frac{d}{2},
\eq
where $\gamma$ is a symbol of order $\mez$. \\
Let us now look at the linearization of \eqref{ww} (take $g=1$ and  infinite depth) around the rest state $(0, 0)$:
\[
\begin{cases}
\partial_t\eta-|D_x|\psi=0,\\
\partial_t\psi+\eta=0,
\end{cases}
\]
which is equivalent to, after imposing $u:=\eta+i|D_x|^\mez\psi $,
\bq\label{eq:lin}
\partial_t u+i|D_x|^\mez u=0.
\eq
For this Schrodinger-type dispersive equation we can prove classically the Strichartz estimates
\bq\label{Str:lin}
\lA u\rA_{L^pW^{s-\frac{d}{2}+\mu_{opt}, \infty}}\le C(s,d) \lA u\arrowvert_{t=0}\rA_{H^s},\quad \begin{cases}
\mu_{opt}=\frac{1}{8},~p=4\quad\text{if}~d=1, \\
\mu_{opt}=\frac{1}{4}-,~p=2\quad\text{if}~d\ge 2,
\end{cases}
\eq
from which the estimates for the original unknowns $\eta, \psi$ can be recovered. Our first result states that the fully nonlinear system \eqref{ww} enjoys  Strichartz estimates with the same gain as in \eqref{Str:lin}, for solutions slightly smoother than the energy threshold in \cite{ABZ3}.
\begin{nota}
Denote \begin{align*}
&\mathcal{H}^s=H^{s+\mez}(\xR^d)\times H^{s+\mez}(\xR^d)\times H^s(\xR^d)\times H^s(\xR^d),\\
&\mathcal{W}^s=W^{r+\mez,\infty}(\xR^d)\times W^{r+\mez,\infty}(\xR^d)\times W^{r,\infty}(\xR^d)\times W^{r,\infty}(\xR^d).
\end{align*}
\end{nota}
\begin{theo}\label{main:theo}
Let $d=1,~ 2$ and consider a solution $(\eta, \psi)$ of \eqref{ww} on the time interval $I=[0, T],~T<+\infty$ such that $\Omega_t$ satisfies $H_t$ for every $t\in [0, T]$ and 
\[
(\eta, \psi, B, V)\in C([0, T]; \mathcal{H}^s).
\]
(see Theorem $1.2$, \cite{ABZ3}). Define 
\[
\begin{cases}
s(d)=\frac{5}{3}+\frac{d}{2},~\mu_{opt}(d)=\frac{1}{8},~p(d)=4\quad\text{if}~d=1, \\
s(d)=2+\frac{d}{2},~\mu_{opt}(d)=\frac{1}{4}-,~p(d)=2\quad\text{if}~d\ge 2.
\end{cases}
\]
Then for any $s>s(d)$ we have
\[
(\eta, \psi, B, V)\in L^{p(d)}(I;\mathcal{W}^{s-\frac{d}{2}+\mu_{opt}(d),\infty}).
\]
\end{theo}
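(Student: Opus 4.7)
My plan is to follow the strategy outlined in the introduction: a paradifferential symmetrisation of Alazard--Burq--Zuily type, followed by a sharp dispersive parametrix of Koch--Tataru wave-packet type \cite{KT}. The symmetrisation (Theorem \ref{theo:sym}) conjugates the $(\eta,\psi,B,V)$-system to a single scalar paradifferential equation
\[
(\partial_t + T_V\cdot\nabla + iT_\gamma)u = f\in L^\infty(I;H^s),
\]
with $\gamma$ a symbol of order $\mez$ whose principal part is $\sqrt{a|\xi|}$. Strichartz bounds on $u$ then translate back to $(\eta,\psi,B,V)$ through paraproduct estimates and elliptic regularity in the spirit of \cite{ABZ3}, so the whole game is to prove a sharp Strichartz inequality for the model equation above at regularity $s>s(d)$.

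\textbf{Step 1 (dyadic reduction).} Apply a Littlewood--Paley block $\Delta_j$ to the equation and set $h=2^{-j}$. Commutators with $T_V\cdot\nabla$ and $T_\gamma$ cost only a bounded number of derivatives thanks to the $W^{1,\infty}$-control on $V$ and $a$ coming from Sobolev embedding at regularity $s>1+d/2$; they are absorbed into the source using the $L^\infty_t H^s_x$ energy estimate. It then suffices to prove a semi-classical Strichartz bound on each dyadic piece $u_h=\Delta_j u$ and to sum in $j$ via a Littlewood--Paley square function at $p=p(d)$.

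\textbf{Step 2 (parametrix and dispersion).} For each dyadic piece I slice $I$ into subintervals of length $h^{\delta}$ with $\delta>0$ to be fixed, and on each of them build a parametrix of the form
\[
U_h(t)f(x) = \int a_h(t,x,y,\xi)\,e^{i\varphi_h(t,x,y,\xi)}\,f(y)\,dy\,d\xi,
\]
made of coherent states of scale $h^{\mez}$ in position and $h^{-\mez}$ in frequency, transported along the Hamiltonian flow of a mollified version of the principal symbol $V\cdot\xi+\sqrt{a|\xi|}$. A stationary-phase estimate on the $TT^{*}$ kernel produces the dispersive decay $|t-s|^{-d/2}h^{-d/2}$ on these short intervals, and the abstract Keel--Tao argument then delivers the Strichartz estimate with the admissible pair $(p(d),\infty)$ matching \eqref{Str:lin}.

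\textbf{Main obstacle.} The delicate point is the mismatch between the limited smoothness of the coefficients $(V,a)$ --- only controlled in a H\"older space $C^{s-d/2}_*$ by Sobolev embedding --- and the smoothness required to control the Hamiltonian flow and close the parametrix over intervals of length $h^{\delta}$. I would mollify $V$ and $\gamma$ at spatial scale $h^{\beta}$, compare the true and mollified flows by Gr\"onwall, and optimise the triple $(\delta,\beta,s)$ so that the parametrix error is of strictly lower order than the dispersive gain while the coherent state remains concentrated throughout the life-span. This optimisation is exactly what fixes the sharp thresholds $s(d)=\tfrac{5}{3}+\tfrac{d}{2}$ in 1D and $s(d)=2+\tfrac{d}{2}$ in higher dimensions; it is precisely here that the present argument improves on \cite{ABZ4}, where the systematic use of symbolic calculus alone forced the suboptimal losses \eqref{gain:ABZ4}.
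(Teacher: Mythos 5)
Your Step~1 matches the paper's dyadic reduction, and the high-level framework (paradifferential symmetrisation $\to$ dispersive estimate on dyadic blocks $\to$ $TT^*$) is correct. But Step~2 contains a genuine gap, and it is precisely the gap the paper is designed to close.

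First, you propose to build a coherent-state/WKB parametrix for the full mollified Hamiltonian $V\cdot\xi+\sqrt{a|\xi|}$, slice time at scale $h^\delta$, and then optimise $(\delta,\beta,s)$. This is the strategy of \cite{ABZ4}, and the paper explicitly records that it cannot reach the sharp gain: the systematic use of semi-classical symbolic calculus in that approach forces the suboptimal losses \eqref{gain:ABZ4} even for smooth solutions. The claim that ``optimisation fixes the sharp thresholds'' is therefore not substantiated --- the optimisation was already carried out in \cite{ABZ4} and it saturates strictly below $\mu_{opt}$. The paper instead invokes the Koch--Tataru dispersive estimate (Proposition~\ref{theo:KT}) for the Weyl quantisation of a symbol in $\lambda S^2_\lambda$, a result based on an FBI phase-space transform rather than a WKB parametrix; what that mechanism requires is only $C^2$ spatial regularity of the symbol together with nondegenerate $\partial_\xi^2$-Hessian on the characteristic set, and it is this weaker hypothesis that lets the thresholds $s(d)$ come down to $\tfrac53+\tfrac d2$ and $2+\tfrac d2$.

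Second, and independently of which dispersive mechanism you use, your plan skips the step that makes the pure-gravity problem tractable at all. In \eqref{ww:reduce} the dispersive symbol $\gamma=\sqrt{a\lambda}$ has order $\tfrac12$, strictly below the order-$1$ transport term $T_V\cdot\nabla$; you cannot run a stationary-phase argument on the Hamiltonian $V\cdot\xi+\sqrt{a|\xi|}$ because transport dominates and the dispersion is subleading. The paper first straightens the vector field $\partial_t+S_{(j-3)\delta}(V)\cdot\nabla$ via the flow $X(t,\cdot)$ (Section~\ref{Straighten}), removing the transport entirely, and only then makes the rescaling $y=h^{1/2}z$ to promote the remaining order-$\tfrac12$ dispersive operator to an order-$1$ operator in the new variables. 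After that there is no time-slicing at scale $h^\delta$ for Theorem~\ref{main:theo}: the dispersive estimate of Koch--Tataru is applied directly on $O(1)$ time intervals, and the entire work is in controlling the new symbol $p_h^0$ (Proposition~\ref{est:p0}), where the parameters $\delta,r_0,r_1$ are not free to be ``optimised'' but are pinned down by the requirement that $p_h^0\in\lambda S^2_\lambda$. Without the straightening and the $h^{1/2}$ rescaling, and without replacing the WKB parametrix by the Koch--Tataru estimate, your Step~2 does not produce the stated thresholds.
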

\subsubsection{Gravity-capillary waves} Let us now look at the linearization of \eqref{ww:0} (with infinite depth) around the rest state $(0, 0)$,
\[
\begin{cases}
\partial_t\eta-|D_x|\psi=0,\\
\partial_t\psi-\Delta \eta=0
\end{cases}
\]
or equivalently, with  $\Phi=|D_x|^\mez\eta+i\psi$
\bq\label{ww:lin}
\partial_t\Phi +i|D_x|^\tdm \Phi=0,
\eq
for which one can easily prove the following Strichartz estimates
\bq\label{Str:lin}
\lA \Phi\rA_{L^pW^{s-\frac{d}{2}+\mu_{opt}, \infty}}\le C(s, d)\lA \Phi\arrowvert_{t=0}\rA_{H^s},\quad \begin{cases}
\mu_{opt}=\frac{3}{8},~p=4\quad\text{if}~d=1, \\
\mu_{opt}=\frac{3}{4}-,~p=2\quad\text{if}~d\ge 2.
\end{cases}
\eq
Turning to the nonlinear case, in high dimensions ($d\ge 2$)  the geometry can be non trivial and hence trapping can occur. As a consequence, natural dispersive estimates expected are the one constructed at small time scales which are tailored to the frequencies. The propagator $e^{-it|D_x|^{\tdm}}$ has the speed of propagation of order $|\xi|^{\mez}$. Hence, for time $|t|<|\xi|^{-\mez}$, we do not expect to encounter any problem due to the global geometry. This leads to the so called {\it semi-classical Strichartz estimate}. This  terminology appeared in \cite{BGT} for a study of the Schr\"odinger equations on compact manifolds. To realize this heuristic argument, one multiplies both sides of \eqref{ww:lin} by $h^\tdm$ with $h=2^{-j},~j\in \xN$ and make a change of temporal variables $t=h^\mez \sigma,~u(\sigma, x)=\Phi(h^\mez\sigma, x)$ to derive the semi-classical equation
\bq\label{ww:lins}
h\partial_\sigma u+|hD_x|^\tdm u=0.
\eq
Then the optimal dispersive estimates for \eqref{ww:lins} implies the semi-classical Strichartz estimates for \eqref{ww:lin} with a lost of $\frac{1}{8}$ derivatives when $d=1$ and $\frac{1}{4}$ derivatives when $d\ge 2$.\\
In \cite{ABZ1} it was proved that if
\bq\label{reg}
(\eta, \psi)\in C([0, T]; H^{s+\mez}\times H^s), \quad s>2+\frac{d}{2},
\eq
then system \eqref{ww:0} can be symmetrized into a single equation analogous to its linearization \eqref{ww:lin}:
\bq\label{ww:sym}
\lp\partial_t +T_V\cdot\nabla +iT_\gamma\rp u=f\in L^\infty H^s,\quad \gamma\in \Gamma^\tdm,
\eq
from which the local-wellposedness was obtain at this regularity level-\eqref{reg}.
Using this reduction, Alazard-Burq-Zuily  \cite{ABZ2} established, for 2D waves, the semi-classical Strichartz estimate at the threshold \eqref{reg} and  the classical (optimal) Strichartz estimate when $s>5+\mez$. We remark that in \cite{ABZ1}, the semi-classical gain is achieved due to the fact that after a {\it para change of variables} (see Proposition $3.4$, \cite{ABZ2}), the highest order term $T_\gamma u$ in \eqref{ww:sym} is converted into the simple Fourier multiplier $|D_x|^\tdm$. Unfortunately, such a reduction can not work for the 3D case and hence, the semi-classical Strichartz estimate in this case is much more difficult to establish, especially at the regularity level \eqref{reg}. In the present paper, we aim to investigate the semi-classical Strichartz estimate for \eqref{ww:0} when $d \ge 2$, assuming that the solution is slightly smoother than \eqref{reg} ($1/2$ derivatives). Our second result reads as follows. 
\begin{theo}\label{main:theo:0}
Let $d\ge 2$ and $0<T<\infty$.  Consider a solution $(\eta, \psi)$ of \eqref{ww:0} on the time interval $I=[0, T]$ such that $\Omega_t$ satisfies $H(t)$ for every $t\in [0, T]$ and
\[
(\eta, \psi)\in C([0, T]; H^{s+\mez}(\xR^d)\times H^s(\xR^d)).
\]
If $s>\frac{5}{2}+\frac{d}{2}$ then for every $\eps>0$, there holds
\[
(\eta, \psi)\in L^{2}([0, T]; W^{s+1-\eps-\frac{d}{2}}(\xR^d)\times W^{s+\mez-\eps-\frac{d}{2},\infty}(\xR^d)).
\]
\end{theo}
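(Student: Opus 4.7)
The plan is to follow the paradifferential strategy of Alazard-Burq-Zuily \cite{ABZ1,ABZ2}, extending their 2D semi-classical Strichartz analysis to $d\ge 2$, where the \emph{para change of variables} that flattens the principal part to $|D_x|^\tdm$ is no longer available. The first step is to invoke the symmetrization \eqref{ww:sym}, which reduces the problem to Strichartz estimates for solutions of
\[
\lp\partial_t+T_V\cdot\partialx +iT_\gamma\rp u=f,\qquad \gamma\in\Gamma^\tdm,\quad f\in L^\infty_t H^s_x,
\]
where the principal symbol of $\gamma$ is real, elliptic, and homogeneous of order $3/2$, and $V$ is Lipschitz. A Strichartz bound for $u$ of the form $H^s\to L^2_t W^{s-\frac{d}{2}+\mez-\eps,\infty}_x$ yields the stated H\"older regularity for $(\eta,\psi)$, since the paradifferential operators relating $u$ to $(\eta,\psi)$ are continuous between the corresponding H\"older spaces. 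The transport term $T_V\cdot\partialx$ is eliminated by the usual flow straightening.

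Next, after a Littlewood-Paley decomposition $u=\sum_{j\ge 0} u_j$ with $u_j$ localized at $|\xi|\sim 2^j=:h^{-1}$, partition $I=[0,T]$ into $N_h\sim Th^{-\mez}$ intervals $I_k$ of semi-classical length $h^\mez$, and smooth the coefficients $V,\gamma$ in $x$ at that same scale. Since $s>\frac{5}{2}+\frac{d}{2}$, the coefficients lie strictly above the H\"older threshold required by the parametrix construction, so that the smoothing errors can be absorbed into an enlarged right hand side of the same type $L^\infty_t H^s_x$. The central step is then the construction, on each $I_k$, of a wave package parametrix of Koch-Tataru type \cite{KT}: elementary solutions $\va_{(x_0,\xi_0)}(t,x)=a(t,x)\,e^{i\phi(t,x;\xi_0)}$ concentrated on the bicharacteristic tube of $\gamma^{(3/2)}$ issuing from $(x_0,\xi_0)$, superposed with weights obtained from an FBI-type decomposition of $u_j|_{t=t_k}$.

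The dispersive estimate then follows from stationary phase applied to the resulting oscillatory integral: the Hessian in $\xi$ of $\gamma^{(3/2)}$ is non-degenerate with eigenvalues of size $|\xi|^{-\mez}$, so van der Corput yields the sharp semi-classical $L^1\to L^\infty$ decay on $I_k$; a $TT^*$ argument then produces the semi-classical Strichartz inequality on $I_k$ with $p=2$ and H\"older gain $\mez-\eps$. Square-summing the $L^2(I_k)$ bounds over the $N_h$ intervals recovers the estimate on the whole of $I$, and the dyadic sum over $j$ converges thanks to the $\eps$-slack in the target H\"older index.

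The hard part will be the parametrix construction. Because the coefficients of $\gamma$ only have limited H\"older regularity in $x$, the bicharacteristic flow of the principal symbol is of limited smoothness, so one can iterate only finitely many WKB corrections before the remainders stop improving. One must carefully track the errors from coefficient-smoothing, phase construction, and the truncated WKB expansion, and show that the total residual is of sufficiently low order to be treated as an $L^\infty_t H^s_x$ source; this is precisely where the extra $\mez$ derivative of regularity above the well-posedness threshold of \cite{ABZ1} is spent.
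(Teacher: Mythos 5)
Your high-level strategy (paradifferential reduction via \cite{ABZ1}, Littlewood-Paley localization, semi-classical time rescaling by $h^{1/2}$, a Koch--Tataru dispersive estimate, $TT^*$, gluing over semi-classical intervals, and summing over dyadic blocks with an $\eps$-slack) is the one the paper follows, but there are two deviations worth flagging. First, you propose to eliminate the transport term $T_V\cdot\nabla$ by straightening the flow. The paper does \emph{not} do this for Theorem~\ref{main:theo:0}: because the dispersive part $T_\gamma$ is of order $\tdm$ while $T_V\cdot\nabla$ is of order $1$, after the $t=h^{1/2}\sigma$ rescaling the dispersive term becomes order $1$ and the transport term drops to order $3/4$; it is then simply absorbed into the symbol $p_h=\Gamma_h+V_h$ fed into the Koch--Tataru machinery (and $V_h$ contributes nothing to the Hessian since it is linear in $\xi$). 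Straightening is reserved for Theorem~\ref{main:theo}, where the dispersive term is order $1/2$, \emph{lower} than the transport, so it cannot be treated as a perturbation; in that case an additional spatial rescaling is also needed, and checking the curvature condition for the conjugated symbol becomes the main technical burden. If you straighten here you incur that extra work unnecessarily, and you must then verify the curvature hypothesis for a flow-conjugated symbol rather than for $S_{j\delta}(\gamma)$ itself. Second, you set out to build the wave-packet parametrix (FBI decomposition, WKB iterates, error tracking) from scratch. The paper instead invokes Koch--Tataru's Proposition~4.7 as a black box, so the substance of the proof is just verifying the hypotheses: $p_h\in\ld S^2_\ld$ (via Lemma~\ref{Bernstein} and the $h^{1/2}$-smoothing of the coefficients), the curvature condition ({\bf A}) (via the nondegeneracy of $\det\partial_\xi^2\gamma$, stable under smoothing), the passage from $\Op$ to $\Op^w$ with controlled remainders (Lemmas~\ref{toW:improve}--\ref{adjoint:improve}), and the $H^\mu$ energy bound for the propagator (Lemma~\ref{energy:0}). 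Your parametrix construction would essentially reprove Koch--Tataru's result, which is valid but not the economical route the paper takes; moreover, your sketch leaves unaddressed the Weyl-versus-standard quantization remainders and the careful treatment of the expansion tails $r_h^N$, which the paper handles via Sobolev embedding and the uniform $H^\mu$ bound on the flow map.
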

\begin{rema}
Our proof of Theorem \ref{main:theo:0} works equally for the 2D waves ($d=1$), when $(\eta, \psi)\in C([0, T]; H^{s+\mez}(\xR)\times H^s(\xR))$ with $s>\frac{5}{2}+\mez$. On the other hand, using the paracomposition reduction of Proposition $3.4$ in \cite{ABZ2} we can indeed improve the preceding regularity to $s>2+\mez$, which is the same as Theorem $1.1$ in \cite{ABZ2}. 
\end{rema}
\subsubsection{On the proof of the main results}
 In \cite{ABZ4, NgPo2} the authors worked completely in the semi-classical formalism and proved dispersive estimates using the approximation WKB method.  This allowed the authors to prove Strichartz estimates with nontrivial gains even for very rough backgrounds. However, we emphasize that with this method, we were not able to reach the classical or semi-classical level as in Theorems \ref{main:theo} and \ref{main:theo:0}. The dispersive estimates for principally normal pseudo-differential operators in \cite{KT} require more regularity ($C^2$) of the symbols to control the Hamiltonian flow and apply the FBI transform technique. This  allows us to obtain sharp dispersive estimates when the characteristic set of the symbol has maximal nonvanishing principal curvatures. \\
For the proof of our main results, we shall combine the paradifferential reduction in the works of Alazard-Burq-Zuily  with the phase transform method in the work \cite{KT} of Koch-Tataru. Notice that the later works effectively for operators of order $1$, after renormalizing. For gravity-capillary waves (see \eqref{ww:sym}) the dispersive term has order $\frac{3}{2}$ and thus the semi-classical time-scale brings it to the one of order $1$ and hence leads to the semi-classical Strichartz estimate in Theorem \ref{main:theo:0}. For the pure gravity waves \eqref{eq:sym:0}, one observes that the dispersive term $iT_\gamma$ has order $\mez$ which is lower than that of the transport term $T_V\cdot\nabla$. Here, we follow \cite{ABZ4}, suppressing this transport term by straightening the vector field $\partial_t+T_V\cdot\nabla$ and then make another change of spatial variables to convert it to an operator of order $1$. However, the new symbol then is not in the standard form $p(x, \xi)$ to apply phase transforms and other technical issues appear. Thus, the proof of Theorem \ref{main:theo} requires much more care.
\section{Preliminaries}
\subsection{Symmetrization of system \eqref{ww}}
We first recall the symmetrization of system \eqref{ww} to a single quasilinear equation, performed in \cite{ABZ1}. This reduction requires the following symbols:
\begin{itemize}
\item Symbols of the Dirichlet-Neumann operator
\begin{gather*}
\lambda^{(1)}:=\sqrt{(1+|\nabla \eta|^2)|\xi|^2-(\nabla \eta\cdot\xi)^2},\\
\lambda^{(0)}:=\frac{1+|\nabla \eta|^2}{2\lambda^{(1)}}\left\{ \cnx( \alpha^{(1)}\nabla\eta)+i\nabla_\xi\lambda^{(1)}\cdot\nabla_x\alpha^{(1)}\right\},~~\alpha^{(1)}:=\frac{\lambda^{(1)}+i\nabla\eta\cdot\xi}{1+|\nabla\eta|^2}.
\end{gather*}
\item Symbols of the mean-curvature operator:
\[
{\ell}^{(2)}:=(1+|\nabla \eta|^2)^{-\frac{1}{2}}\left(|\xi|^2-\frac{(\nabla \eta\cdot\xi)^2}{1+|\nabla\eta|^2}\right), \quad {\ell}^{(1)}:=-\frac{i}{2}(\partial_x\cdot\partial_{\xi})\ell^{(2)};
\]
\item Symbols using for symmetrization
\[
q:=\left(1+(\nabla_x\eta)^2 \right)^{-\frac{1}{2}},~p=\left(1+(\nabla_x\eta)^2 \right)^{-\frac{5}{4}}|\xi|^\mez+p^{(-\mez)} ,
\] 
where $p^{(-\mez)}:=F(\nabla_x\eta, \xi)\partial^\alpha_x\eta$,  with $|\alpha|=2$ and 
$F\in C^\infty(\xR^d\times \xR^d\setminus\{0\}; \xC)$ homogeneous of order $-\mez$ in $\xi$.
\item Symbols in the symmetrized equation:
\begin{align*}
&\gamma:=\sqrt{l^{(2)}\lambda^{(1)}}=\left(\frac{\la\xi\ra^2(1+\la\nabla\eta\ra^2)-(\nabla\eta\cdot\xi)^2}{1+\la\nabla\eta\ra^2}\right)^\frac{3}{4}, \\
& \omega:=-
      \frac{i}{2}(\partial_\xi\cdot\partial_x)\sqrt{l^{(2)}\lambda^{(1)}},\quad \omega_1:=\sqrt{\frac{l^{(2)}}{\lambda^{(1)}}}\frac{\Re\lambda^{(0)}}{2}.
\end{align*}
\end{itemize}
Define  the {\it good-unknown} $U:=\psi-T_B\eta$. It was proved in \cite{ABZ1} the following result.
\begin{theo}[\protect{\cite[Corollary 4.9]{ABZ1}}]\label{theo:sym:0} Let  $s>2+\frac{d}{2}$  and $(\eta, \psi)\in C^0([0, T]; H^{s+\mez}\times H^s)$ be a solution to  \eqref{ww:0} and satisfies condition $(H_t)$ for every $t\in [0, T]$. The complex-valued function $u:=T_p\eta+iT_qU$ then solves the following paradifferential equation
	\begin{equation}\label{ww:reduce:0}
		\partial_tu+ T_V\cdot\nabla  u+iT_{\gamma+\omega+\omega_1} u=f,
	\end{equation}
	where there exists a nondecreasing function $\cF:\xR^+\times \xR^+\to \xR^+$ in dependent of $(\eta, \psi)$ such that
	\begin{equation}\label{est:RHS}
		\lA f\rA_{L^\infty([0, T]; H^s)}\leq\cF\left(\lA(\eta, \psi)\rA_{L^\infty([0, T]; H^{s+\mez}\times H^s)}\right).
	\end{equation}
\end{theo}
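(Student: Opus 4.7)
The plan is to follow the standard two-step paradifferential program: first rewrite the system \eqref{ww:0} in terms of the good unknown $(\eta, U)$ with $U = \psi - T_B \eta$, then symmetrize by applying pseudodifferential factors $T_p$ and $T_q$ chosen so that the resulting scalar equation has a self-adjoint principal part with symbol $\gamma = \sqrt{\ell^{(2)} \lambda^{(1)}}$.

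I would start by paralinearizing the Dirichlet-Neumann operator. Using the classical elliptic paradifferential calculus for the boundary value problem \eqref{phi} (after a standard flattening of the domain), one obtains
\[
G(\eta)\psi = T_{\lambda^{(1)} + \lambda^{(0)}} U - \cnx( T_V \eta) + R_1,
\]
where $R_1 \in L^\infty([0,T]; H^s)$; the appearance of the good unknown $U$ is precisely what removes the apparent loss of derivatives on $\eta$ in the linearization. Similarly, the mean curvature paralinearizes as $H(\eta) = -T_{\ell^{(2)} + \ell^{(1)}} \eta + R_2$ with $R_2 \in L^\infty([0,T]; H^{2s-1-d/2})$. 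Plugging these into \eqref{ww:0} and using the identities relating $\partial_t \psi$ to $\partial_t U$ (which produces the Taylor coefficient $a$), I would arrive at the symmetrizable system
\[
\begin{cases}
\partial_t \eta + T_V\cdot \nabla \eta - T_{\lambda^{(1)}+\lambda^{(0)}} U = F_1, \\
\partial_t U + T_V \cdot \nabla U + T_a \eta + \sigma \, T_{\ell^{(2)}+\ell^{(1)}} \eta = F_2,
\end{cases}
\]
with $(F_1, F_2) \in L^\infty([0,T]; H^s) \times L^\infty([0,T]; H^s)$ bounded by $\mathcal{F}(\lA(\eta,\psi)\rA)$.

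The next step is the symmetrization itself. The goal is to find scalar symbols $p, q$ such that $u = T_p \eta + i T_q U$ satisfies a single equation with principal symbol $\gamma$. Applying $T_p$ to the first line and $i T_q$ to the second line, commuting with $\partial_t + T_V \cdot \nabla$ modulo $L^\infty H^s$ errors by the paradifferential commutator calculus, and matching the off-diagonal terms forces, at the principal level, $p \cdot \lambda^{(1)} = q \cdot \ell^{(2)} \cdot q^{-1} \cdot p$, i.e.\ the ratio between the coefficients of $\eta$ and $U$ must be $\gamma = \sqrt{\ell^{(2)} \lambda^{(1)}}$; the explicit choice $q = (1+|\nabla \eta|^2)^{-1/2}$, $p$ of order $\mez$ with principal part $(1+|\nabla \eta|^2)^{-5/4}|\xi|^{\mez}$, realizes this and simultaneously absorbs the Taylor coefficient $T_a \eta$ into the principal symbol (using $a = $ leading contribution of the problem). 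The subprincipal corrections $\omega$ and $\omega_1$ arise respectively from the asymmetry of the $(\partial_\xi \cdot \partial_x)$ term in the composition $T_p T_{\ell^{(2)}} \circ T_q^{-1}$ (yielding $-\frac{i}{2}(\partial_\xi \cdot \partial_x)\gamma$) and from the real part of the subprincipal symbol $\lambda^{(0)}$ of the Dirichlet-Neumann operator.

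The main technical obstacle, beyond bookkeeping, is ensuring that every paradifferential composition and commutator produces a remainder that actually lies in $L^\infty H^s$, not merely $L^\infty H^{s-\text{something}}$. Since $\gamma$ has order $3/2$, composing $T_p$ with $T_\gamma$ via the symbolic calculus of Appendix \ref{appendix} gives, in addition to $T_{p\gamma}$, a lower-order term of order $p+\gamma-1$ acting on objects of Sobolev regularity $s$; one then needs the regularity threshold $s > 2 + d/2$ precisely so that $\eta \in H^{s+\mez}$ lies in $C^{1+\eps}_*$, which is what is required to apply the symbolic calculus for symbols with limited spatial regularity (the coefficients of $\lambda, \ell$ depend on $\nabla \eta$). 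Tracking this carefully, together with the paralinearization remainder estimates for $G(\eta)$ and $H(\eta)$ at this regularity, and applying a few Bony-type paraproduct estimates to convert products like $V \cdot \nabla u$ into $T_V \cdot \nabla u$ modulo $L^\infty H^s$, yields \eqref{ww:reduce:0} with \eqref{est:RHS}.
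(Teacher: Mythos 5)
The statement you are asked about is not actually proved in this paper: it is quoted verbatim, with citation, as Corollary~4.9 of Alazard--Burq--Zuily \cite{ABZ1}. There is therefore no ``paper's own proof'' to compare against; the authors import the symmetrization result wholesale from \cite{ABZ1}, exactly as they later import Theorem~\ref{theo:sym} from \cite{ABZ3}. That said, let me comment on the accuracy of your sketch of what the proof in \cite{ABZ1} looks like.

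Your outline of the two-step program (paralinearize the Dirichlet--Neumann and mean-curvature operators with the good unknown $U=\psi-T_B\eta$, then symmetrize with $T_p$, $T_q$ so that the scalar equation has symbol $\gamma=\sqrt{\ell^{(2)}\lambda^{(1)}}$, with subprincipal corrections $\omega$ from the composition calculus and $\omega_1$ from $\RE\lambda^{(0)}$) is the right strategy and does track \cite{ABZ1}. Two points are off, however. First, the off-diagonal matching condition you write, ``$p\cdot\lambda^{(1)}=q\cdot\ell^{(2)}\cdot q^{-1}\cdot p$,'' collapses to $\lambda^{(1)}=\ell^{(2)}$, which is not the requirement. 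The correct first-order constraints from matching the coefficients of $U$ and of $\eta$ in $\partial_t u + T_V\cdot\nabla u + iT_\gamma u$ are, at principal level,
\[
p\,\lambda^{(1)}=\gamma\,q
\qquad\text{and}\qquad
q\,\ell^{(2)}=\gamma\,p,
\]
whose compatibility gives $\gamma^2=\lambda^{(1)}\ell^{(2)}$ (and the ratio $p/q=\gamma/\lambda^{(1)}=\ell^{(2)}/\gamma$ fixes $p$ up to the explicit normalization $q=(1+|\nabla\eta|^2)^{-1/2}$, $p=(1+|\nabla\eta|^2)^{-5/4}|\xi|^{1/2}+p^{(-1/2)}$). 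Second, your claim that the Taylor coefficient $T_a\eta$ is ``absorbed into the principal symbol'' is incorrect in the \emph{capillary} setting of Theorem~\ref{theo:sym:0}. Here $\gamma$ has order $3/2$, while $T_a\eta$ acts at order $0$ on $\eta\in H^{s+1/2}$, so after applying $iT_q$ this term lands directly in $H^{s+1/2}\subset H^s$ and is simply part of the source $f$; the Taylor coefficient only enters the \emph{principal} symbol $\gamma=\sqrt{a\lambda}$ in the pure-gravity reduction of Theorem~\ref{theo:sym}, where there is no higher-order capillary term to dominate it. Apart from these two corrections, your sketch is a reasonable summary of the cited argument.
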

\subsection{Symmetrization of system \eqref{ww:0}}
Define first the principle symbol of the Dirichlet-Neumann operator
$$
\ld=\Bigl(\big(1+ \vert \nabla \eta\vert^2\big)\vert \xi \vert^2 
- \big(\xi \cdot \nabla \eta\big)^2\Bigr)^\mez.
$$ 
Next, set $\zeta=\nabla\eta$ and introduce
$$
U_s \defn \lDx{s} V+T_\zeta \lDx{s}\B,\quad \zeta_s\defn \lDx{s}\zeta.
$$
\begin{theo}[\protect{\cite[Proposition 4.10]{ABZ3}}]\label{theo:sym} Let  $s>1+\frac{d}{2}$  and $(\eta, \psi)\in C^0([0, T]; H^{s+\mez}\times H^s)$ be a solution to  \eqref{ww} such that  condition $(H_t)$ is fulfilled for every $t\in [0, T]$ and the velocity trace
\[
(B, V)\in C^0([0, T]; H^{s+\mez}\times H^s),
\]
and there exists $c_0>0$ such that $a(t, x)\ge c_0$ for all $(t,x)\in [0, T]\times \xR^d$.  Then the complex-valued function $$u:=\lDx{-s}(U_s-iT_{\sqrt{a/\lambda}}\zeta_s)$$
solves the following paradifferential equation
	\begin{equation}\label{ww:reduce}
		\partial_tu+ T_V\cdot\nabla  u+iT_\gamma u=f,
	\end{equation}
	where $\gamma=\sqrt{a\ld}$ and 
\[
\lA a-g\rA_{L^\infty([0, T]; H^{s-\mez})}+\lA f\rA_{L^\infty([0, T]; H^s)}\le \cF\left( \lA (\eta,\psi)\rA_{H^{s+\mez}},\lA (V,B)\rA_{H^s}\right).
\]
\end{theo}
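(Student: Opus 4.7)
The strategy is the Alazard--Burq--Zuily diagonalization, performed at the level of the natural state variables $(V, \zeta)$ with $\zeta = \nabla \eta$. I would proceed in three stages: derive paradifferential evolution equations for $U_s$ and $\zeta_s$ separately, diagonalize the resulting $2\times 2$ paradifferential system using the symbol $\sqrt{a/\lambda}$, and control all commutator and remainder terms in $H^s$.

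For the first stage, I would start from the water waves system \eqref{ww} and the definitions of $B$ and $V$ in \eqref{BV}. Time-differentiating these identities and using the Euler equation in the interior (which at the free surface gives $(\partial_t + V\cdot\nabla)B = a - g$) yields evolution equations for $V$, $B$ and $\zeta$. The central input is the Alazard--M\'etivier paralinearization of the Dirichlet--Neumann operator, which in the Alinhac good-unknown form reads
\[
G(\eta)\psi = T_\lambda (\psi - T_B\eta) - \cnx(T_V \eta) + R(\eta,\psi),
\]
with $R$ gaining $\mez$ derivative relative to the principal term. Differentiating this identity and using $V = \nabla_x\psi - B\nabla_x\eta$ turns $\nabla(\psi - T_B\eta)$ into $V$ plus a $T_\zeta B$ correction: this is precisely the combination encoded in $U_s = \lDx{s}V + T_\zeta \lDx{s}B$. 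After applying $\lDx{s}$ and commuting with the paradifferential operators, one obtains modulo $L^\infty_t H^s_x$ remainders the system
\[
\partial_t \begin{pmatrix} U_s \\ \zeta_s \end{pmatrix} + T_V\cdot \nabla \begin{pmatrix} U_s \\ \zeta_s \end{pmatrix} + \begin{pmatrix} 0 & T_a \\ -T_\lambda & 0 \end{pmatrix} \begin{pmatrix} U_s \\ \zeta_s \end{pmatrix} = \begin{pmatrix} f_1 \\ f_2 \end{pmatrix}.
\]

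For the second stage, the symbol matrix $\bigl(\begin{smallmatrix} 0 & a \\ -\lambda & 0\end{smallmatrix}\bigr)$ has eigenvalues $\pm i\gamma$ with $\gamma = \sqrt{a\lambda}$ and a left eigenvector $(1, -i\sqrt{a/\lambda})$ for $+i\gamma$. The linear combination $U_s - iT_{\sqrt{a/\lambda}}\zeta_s$ therefore diagonalizes the principal coupling; applying $\lDx{-s}$ then yields $(\partial_t + T_V\cdot\nabla + iT_\gamma)u = f$ with $u$ as in the statement. The remainder $f$ collects the paralinearization remainders from the first stage, the symbolic-calculus errors $[\lDx{\pm s}, T_V\cdot\nabla]$ and $[T_V\cdot\nabla, T_{\sqrt{a/\lambda}}]$, and the subprincipal contributions from compositions such as $T_a T_{\sqrt{a/\lambda}} - T_\gamma T_{\sqrt{a/\lambda}}$. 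The assumption $a \ge c_0 > 0$ together with $a - g \in L^\infty_t H^{s-\mez}_x$ guarantees that $\sqrt{a/\lambda}$ is a well-defined $\Gamma^{-\mez}$ symbol with Sobolev seminorms controlled by $\cF\bigl(\lA(\eta,\psi)\rA_{H^{s+\mez}}, \lA(B,V)\rA_{H^s}\bigr)$, and each remainder contribution closes in $H^s$ by the standard paradifferential estimates.

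The main obstacle is the joint handling of the good-unknown correction and the paralinearization of $G(\eta)\psi$: one must verify that the transport part keeps its clean form $T_V\cdot\nabla$ and that the off-diagonal principal part produces exactly $T_a$ and $-T_\lambda$, with no uncontrolled order-$1$ pollution. This rests on the elliptic regularity for $\Delta\phi = 0$ in the fluid domain flattened via assumption $(H)$, on the explicit identification of the subprincipal symbol $\lambda^{(0)}$ of the Dirichlet--Neumann operator, and on the symmetric cancellations produced by the choice of $U_s$. At the regularity $s > 1 + d/2$, the subprincipal contributions are exactly at the right level to be absorbed into the $L^\infty_t H^s_x$ remainder $f$, but the margin is tight and each commutator must be tracked sharply.
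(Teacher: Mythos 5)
The paper does not prove this statement itself; it is quoted verbatim as \cite[Proposition 4.10]{ABZ3} and used as a black box. So there is no ``paper's own proof'' to compare against, only the cited Alazard--Burq--Zuily argument. Your reconstruction does follow the same route as that reference: differentiate the system to work with $(\zeta_s, U_s)$ rather than $(\eta,\psi)$, obtain the $2\times2$ paradifferential transport system with off-diagonal principal part $(T_a, -T_\lambda)$, and diagonalize by taking the combination determined by the left eigenvector $(1, -i\sqrt{a/\lambda})$ of $\bigl(\begin{smallmatrix} 0 & a \\ -\lambda & 0\end{smallmatrix}\bigr)$, which is precisely the good unknown $U_s - iT_{\sqrt{a/\lambda}}\zeta_s$ of the statement. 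The eigenvalue computation, the identification of $\gamma=\sqrt{a\lambda}$, and the catalogue of remainder terms are all as in ABZ3.

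One technical caveat, however: the paralinearization identity you write for $G(\eta)\psi$ with a remainder gaining $\tfrac12$ derivative is the ABZ1-style statement valid for $s>2+\tfrac{d}{2}$; at the threshold $s>1+\tfrac{d}{2}$ of the present theorem, ABZ3 does \emph{not} simply invoke that identity. Instead they paralinearize the Dirichlet--Neumann operator directly at the level of $(B,V,\zeta)$, using the extra half-derivative on $\eta$ ($\eta\in H^{s+\mez}$ while $V,B\in H^s$) and a more refined elliptic analysis to get remainders landing in $H^s$. Your sketch of the first stage therefore glosses over the genuinely hard step of the cited proof; the second and third stages (diagonalization and commutator bookkeeping) are accurate as described.
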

\begin{rema}\label{inverse}
The change of variables $(\eta, \psi)\mapsto u$ in Theorem \ref{theo:sym:0} and $(\eta, \psi, B, V)\mapsto u$ in Theorem \ref{theo:sym} are essentially ``invertible" in the sense that one can recover Sobolev estimates and H\"older estimates for $(\eta, \psi, B, V)$  from those for $u$ by virtue of the symbolic calculus for paradifferential operators contained in Theorem \ref{theo:sc}.
\end{rema}
\subsection{Para and pseudo differential operators}
Since the paradifferential setting is not suitable for proving dispersive estimates, we shall change it into the pseudo-differential setting, whose standard definitions are recalled here.
\begin{defi}
1. For any $m\in \xR,~0\le \delta_1, \delta_2, \rho\le 1$ we denote by $S^m_{\rho,\delta_1, \delta_2}$ the class of all symbols $a(x, y, \xi):(\xR^d)^3\to \xC$ satisfying
\[
\la \partial_x^\alpha\partial_y^\beta\partial_\xi^\gamma a(x, y, \xi)\ra \le C_{\alpha, \beta, \gamma}(1+|\xi|)^{m+\delta_1|\alpha|+\delta_2|\beta|-\rho|\gamma|}.
\]
The corresponding pseudo-differential operator is defined by 
\[
\Op(a)u(x)=\int_{\xR^d}e^{i(x-y)\xi}a(x, y, \xi)u(y)dy d\xi.
\]
When $a:(\xR^d)^2\to \xC$ we consider it as a symbol in $S^m_{\rho, \delta_1, 0}$ that does not depend on $y$ and rename $S^m_{\rho, \delta_1, 0}\equiv S^m_{\rho,\delta_1}$.\\
2. For any symbol $a(x, \xi)\in S^m_{\rho, \delta}$ the Weyl quantization $\Op^w(a)\equiv a^w(x, D_x)$ is defined by $\Op^w(a)u(x)=\Op(b)u(x)$
with $b(x, y, \xi):=a(\frac{x+y}{2}, \xi)\in S^m_{\rho, \delta, \delta}$.
\end{defi}
We shall later need to transform the operators $\Op(a)$ to $\Op^w(a)$. This is done by means of the following result, which can be easily deduce from  \cite{Taylor}, Proposition 0.3.A.
\begin{prop}\label{expand}
For any symbol $a\in S^m_{\rho, \delta}$ with $m\in \xR,~0\le \delta<\rho\le 1$ there exists a symbol $b\in S^m_{\rho, \delta}$ such that  $\Op^w(a)=\Op(b)$. Moreover, we have the following asymptotic expansion in the sense of symbolic calculus:
\[
b(x, \xi)\sim \sum_{|\alpha|\ge 0}\frac{(-i)^{|\alpha|}}{\alpha!2^{|\alpha|}}\partial_x^\alpha\partial_\xi^\alpha a (x, \xi).
\]
Remark that for all $\alpha \in \xN^d$, $\partial_x^\alpha\partial_\xi^\alpha a (x, \xi)\in S_{\rho, \delta}^{m-(\rho-\delta)|\alpha|}$.
\end{prop}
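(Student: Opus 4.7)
The plan is to reduce the statement to the standard amplitude-to-left-symbol reduction of the $S^m_{\rho,\delta}$ calculus. By definition
\[
\Op^w(a)u(x)=\iint e^{i(x-y)\cdot\xi}\,c(x,y,\xi)\,u(y)\,dy\,d\xi,\qquad c(x,y,\xi):=a\!\left(\tfrac{x+y}{2},\xi\right).
\]
By the chain rule each $\partial_x$ or $\partial_y$ acting on $c$ produces $\tfrac12\,\partial_x a$, which costs a factor $\langle\xi\rangle^{\delta}$; thus $c\in S^{m}_{\rho,\delta,\delta}$. Since $\delta<\rho$, the standard double-variable amplitude reduction (Taylor, Proposition 0.3.A) produces a symbol $b\in S^m_{\rho,\delta}$ such that $\Op(b)=\Op^w(a)$, together with the asymptotic expansion
\[
b(x,\xi)\sim\sum_{|\alpha|\ge 0}\frac{(-i)^{|\alpha|}}{\alpha!}\,\partial_\xi^\alpha\partial_y^\alpha c(x,y,\xi)\Big|_{y=x}.
\]

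To identify this with the claimed formula, I compute $\partial_y^\alpha c(x,y,\xi)\big|_{y=x}=2^{-|\alpha|}\partial_x^\alpha a(x,\xi)$, whence
\[
b(x,\xi)\sim\sum_{|\alpha|\ge 0}\frac{(-i)^{|\alpha|}}{\alpha!\,2^{|\alpha|}}\,\partial_x^\alpha\partial_\xi^\alpha a(x,\xi),
\]
as asserted. The concluding line of the proposition, namely $\partial_x^\alpha\partial_\xi^\alpha a\in S^{m-(\rho-\delta)|\alpha|}_{\rho,\delta}$, follows immediately from the defining estimates of $S^m_{\rho,\delta}$.

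For completeness, one may prove the result by hand rather than citing the double-symbol reduction as a black box. The steps are: Taylor-expand $a(\tfrac{x+y}{2},\xi)$ in its first slot around $x$ to order $N$, use the identity $(y-x)^\alpha e^{i(x-y)\cdot\xi}=i^{|\alpha|}\partial_\xi^\alpha e^{i(x-y)\cdot\xi}$ followed by integration by parts in $\xi$ to convert each monomial into $\frac{(-i)^{|\alpha|}}{\alpha!\,2^{|\alpha|}}\partial_x^\alpha\partial_\xi^\alpha a(x,\xi)$, control the integral remainder, and finally invoke Borel summation to assemble $b$. The main obstacle in this direct route is the remainder estimate: one must verify that, after the same $(y-x)^\alpha\leftrightarrow\partial_\xi^\alpha$ swap, the Taylor remainder yields an amplitude of order $m-N(\rho-\delta)$, with the $\partial_y$ derivatives hitting $\partial_x^\alpha a(x+t(y-x)/2,\xi)$ contributing only $\langle\xi\rangle^{\delta}$ each. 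This is precisely where the hypothesis $\rho>\delta$ is essential---without it the series would not be asymptotic and the remainder could not be made arbitrarily smoothing by taking $N$ large.
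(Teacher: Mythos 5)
Your reduction to Taylor's Proposition 0.3.A via the amplitude $c(x,y,\xi)=a(\tfrac{x+y}{2},\xi)\in S^m_{\rho,\delta,\delta}$, followed by the evaluation $\partial_y^\alpha c\big|_{y=x}=2^{-|\alpha|}\partial_x^\alpha a$, is exactly how the paper obtains the result (it simply asserts the proposition "can be easily deduced from \cite{Taylor}, Proposition 0.3.A"). Your computation of the constants $\tfrac{(-i)^{|\alpha|}}{\alpha!\,2^{|\alpha|}}$ and your observation that $\rho>\delta$ is what makes the expansion asymptotic are both correct, so the proposal matches the paper's intent.
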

 Now, let $a\in \Gamma^m_r,~r>0$ be a paradifferential symbol (see Definition \ref{defi:para}) and define
\bq\label{smoothout}
\forall j\in \xZ,~\forall \delta>0, \quad S_{j\delta}(a)(x,\xi)=\psi(2^{-j\delta}D_x)a(x,\xi)
\eq
 the spatial regularization of the symbol~$a$, where~$\psi$ is the Littlewood-Paley function defined in~\eqref{defi:psi}. We first prove a Bernstein's type inequality for $S_{j\delta}(a)$. 
\begin{lemm}\label{Bernstein}
If $a\in \Gamma^m_\rho$  then for all  $\alpha, \beta\in \xN^d,~|\alpha|\ge \rho$, there exists a constant $C_{\alpha, \beta}$ such that for all $(x, \xi)\in \xR^{2d}$
\[
\vert \partial_x^\alpha\partial_\xi^\beta S_{j\delta}(a)(x, \xi)\vert\le C_{\alpha, \beta}2^{j\delta(|\alpha|-\rho)}\Vert \partial_\xi^\beta a(\cdot, \xi)\Vert_{W^{\rho, \infty}(\xR^d)}.
\]
\end{lemm}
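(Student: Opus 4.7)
Since $\psi(2^{-j\delta}D_x)$ is a Fourier multiplier acting only in the $x$ variable, it commutes with every $\partial_\xi^\beta$. The plan is therefore to freeze $\xi$, set $b(x)\defn \partial_\xi^\beta a(x,\xi)$, which belongs to $W^{\rho,\infty}(\xR^d)$ with norm $\lA\partial_\xi^\beta a(\cdot,\xi)\rA_{W^{\rho,\infty}}$ by the very definition of $\Gamma^m_\rho$, and reduce the statement to the scalar estimate
\[
\lA \partial_x^\alpha \psi(2^{-j\delta}D_x) b\rA_{L^\infty(\xR^d)} \lesssim 2^{j\delta(|\alpha|-\rho)} \lA b\rA_{W^{\rho,\infty}(\xR^d)}, \qquad |\alpha|\ge \rho.
\]

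When $|\alpha|=\rho$ (which forces $\rho$ to be an integer), I would simply commute $\partial_x^\alpha$ past the multiplier and apply Young's inequality with the convolution kernel $K_{j\delta}(x)=2^{j\delta d}\check\psi(2^{j\delta}x)$, whose $L^1$ norm is independent of $j$; together with $\lA\partial_x^\alpha b\rA_{L^\infty}\le\lA b\rA_{W^{\rho,\infty}}$ this gives the result, and the factor $2^{j\delta(|\alpha|-\rho)}=1$ is consistent. In the remaining case $|\alpha|>\rho$ the plan is to run a Littlewood-Paley decomposition $b=\sum_{k\ge -1}\Delta_k b$ and keep only those blocks with $2^k\lesssim 2^{j\delta}$, since the others lie outside the support of $\psi(2^{-j\delta}\cdot)$. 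For each retained block, Bernstein inside the block yields $\lA\partial_x^\alpha \Delta_k b\rA_{L^\infty}\lesssim 2^{k|\alpha|}\lA\Delta_k b\rA_{L^\infty}$, while the Hölder-Zygmund characterization gives $\lA\Delta_k b\rA_{L^\infty}\lesssim 2^{-k\rho}\lA b\rA_{W^{\rho,\infty}}$; combining these and summing the geometric series in $k$ (ratio $2^{|\alpha|-\rho}>1$, dominated by its top term at $k\sim j\delta$) produces the claimed bound.

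\textbf{Main obstacle.} The argument is essentially routine Littlewood-Paley bookkeeping; the one point that requires attention is that when $\rho$ is not an integer the space $W^{\rho,\infty}$ must be interpreted as the Hölder-Zygmund space $\mathcal{C}^\rho_*$, so that the characterization $\lA\Delta_k b\rA_{L^\infty}\lesssim 2^{-k\rho}\lA b\rA_{W^{\rho,\infty}}$ is legitimate and the geometric series in the Case 2 summation remains finite with the right bound. This is the convention already built into the definition of $\Gamma^m_\rho$ recalled in the appendix, so no extra technical input is required beyond this identification.
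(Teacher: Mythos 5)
Your proposal is correct and follows essentially the same route as the paper's proof: treat $|\alpha|=\rho$ by a direct Young/convolution bound, and for $|\alpha|>\rho$ run a Littlewood–Paley decomposition, truncate at $2^k\lesssim 2^{j\delta}$ using the spectral support of $\psi(2^{-j\delta}\cdot)$, apply Bernstein and the Hölder–Zygmund dyadic characterization on each block, and sum the geometric series. The paper packages the Bernstein step by inserting a cutoff $\varphi_1(2^{-k}D_x)$ to extract the factor $2^{k|\alpha|}$, but this is the same estimate as your block-wise Bernstein inequality.
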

\begin{proof}
If $|\alpha|=\rho$ the estimate is obvious by writing $\partial_x^\alpha\partial_\xi^\beta S_{j\delta}(a)$ as a convolution of $\partial_x^\alpha\partial_\xi^\beta a$ with a kernel. Considering now $|\alpha|>\rho$. Recall the dyadic partition of unity \eqref{partition}: $1=\sum_{k=0}^\infty \Delta_k$ where each $\Delta_k$ is spectrally supported in the annulus $\{2^{k-1}\le |\xi|\le 2^{k+1}\}$. Using this partition, we can write
$$
\partial_x^\alpha \partial^\beta_\xi  S_{j\delta}(a)(x,\xi)
= \sum_{k =0}^{+\infty}\Delta_k \partial_x^\alpha \psi(2^{-j\delta} D_x) \partial^\beta_\xi a(x,\xi):= \sum_{k=0}^{+\infty}u_k
$$
If $\mez 2^{k} \geq  2^{j\delta}$ then $\Delta_k \psi(2^{-j\delta} D_x) =0$. 
Therefore
$$
\partial_x^\alpha \partial^\beta_\xi  S_{j\delta}(a)(x,\xi)= \sum_{k=0}^{2+ [j\delta]} u_k.
$$
Now, introducing $\varphi_1(\xi)\in C^\infty_c(\xR^d)$, supported in $\{\frac{1}{3} \leq \xi \vert \leq 3\}$ one has   
$$
u_k= 2^{k \vert \alpha \vert}  \varphi_1(2^{-k}D_x) \psi(2^{-j\delta} D_x) \Delta_k \partial^\beta_\xi a(x,\xi).
 $$
Consequently,
$$
\Vert u_k\Vert_{L^\infty(\xR^d)} 
\leq 2^{k\vert \alpha \vert} \Vert  {\Delta}_k  D^\beta_\xi  a(\cdot,\xi)\Vert_{L^\infty(\xR^d)} 
\leq C2^{k\vert \alpha \vert} 2^{- k\rho} 
\Vert \partial^\beta_\xi a(\cdot, \xi)\Vert_{W^{\rho, \infty}(\xR^d)}.
$$
It follows that
$$
\Vert \partial_x^\alpha \partial^\beta_\xi  S_{j\delta}(a)(x,\xi)\Vert _{L^\infty(\xR^d)} 
\leq C  \sum_{k =0}^{2+ [j\delta]} 2^{k(\vert \alpha \vert - \rho)} \Vert D^\beta_\xi  a (\cdot, \xi) \Vert_{W^{\rho, \infty}(\xR^d)}.
$$
Finally, since $\vert \alpha\vert - \rho > 0$ we deduce that
$$\Vert \partial_x^\alpha \partial^\beta_\xi  S_{j\delta}(a)(x,\xi)\Vert _{L^\infty(\xR^d)} \le  C  2^{j\delta(\vert \alpha \vert - \rho)}\Vert \partial^\beta_\xi a (\cdot, \xi) \Vert_{W^{\rho, \infty}(\xR^d)},
$$
which concludes the proof.
\end{proof}
We show in the next Proposition that after localizing a distribution $u$ in frequency, one can go from paradifferential operators to pseudo-differential operators when acting on $u$. 
\begin{prop}\label{para:pseudo}
 For every $j\in \xN^*$, define 
$$ R_ju:=T_a\Delta_ju-S_{j-3}(a)(x, D_x)\Delta_ju.$$
Then $R_ju$ is spectrally supported (see Definition \ref{spectral}) in  an annulus $\{c_1^{-1}2^{j-1}\le |\xi|\le c_12^{j+1}\}$ and for every $\mu\in \xR$ we have
$$\lA R_ju\rA_{H^{\mu-m+r}(\xR^d)}\le CM^m_r(a)
			      \lA u\rA_{H^\mu(\xR^d)}
$$
where the constants $c_1,~C>0$ are independent of $a, u, j$.
\end{prop}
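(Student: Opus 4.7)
The idea is to write $R_j u$ as a pseudo-differential operator applied to $\Delta_j u$, whose symbol encodes the discrepancy between the paradifferential cutoff defining $T_a$ and the low-pass smoothing $S_{j-3}$. By Definition \ref{defi:para}, $T_a v=\Op(\chi(D_x,\xi)a(x,\xi))v$ where $\chi(\theta,\xi)$ is an admissible cutoff supported in $\{|\theta|\le\varepsilon|\xi|\}$. Since $\widehat{\Delta_j u}$ is supported in $\{|\xi|\sim 2^j\}$, we choose $\chi$ compatible with the dyadic partition defining $\psi$ so that, restricted to this annulus, $\chi(\theta,\xi)$ and $\psi(2^{-(j-3)}\theta)$ are both smooth low-pass filters at scale $2^{j-3}$, equal to $1$ for $|\theta|\ll 2^{j-3}$ and vanishing for $|\theta|\gg 2^{j-3}$. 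We obtain
\[
R_j u = \Op(\rho_j)\Delta_j u, \qquad \rho_j(x,\xi):=\bigl[\chi(D_x,\xi)-\psi(2^{-(j-3)}D_x)\bigr]a(x,\xi).
\]
The difference of cutoffs acts as a bandpass filter at scale $2^{j-3}$, so $\widehat{\rho_j(\cdot,\xi)}$ is supported in $\{|\theta|\lesssim 2^{j-2}\}$ uniformly for $|\xi|\sim 2^j$. The \emph{spectral localization} of $R_j u$ follows by convolving this $x$-support with the $\xi$-support $\{|\xi|\sim 2^j\}$ of $\widehat{\Delta_j u}$, giving a Fourier support in $\{c_1^{-1}2^{j-1}\le|\xi|\le c_12^{j+1}\}$ for a $j$-independent $c_1$.

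For the \emph{Sobolev estimate}, Bernstein's inequality for a bandpass at scale $2^{j-3}$ applied to $a(\cdot,\xi)\in W^{r,\infty}$ gives, uniformly for $|\xi|\sim 2^j$,
\[
\|\rho_j(\cdot,\xi)\|_{L^\infty_x} \lesssim 2^{-(j-3)r}\|a(\cdot,\xi)\|_{W^{r,\infty}} \lesssim M^m_r(a)\, 2^{j(m-r)},
\]
where the second inequality uses the defining bound $\|a(\cdot,\xi)\|_{W^{r,\infty}}\le M^m_r(a)\langle\xi\rangle^m$ of the symbol class $\Gamma^m_r$. Analogous estimates hold for $\partial_\xi^\beta\rho_j$ thanks to the smoothness of $\chi$ and $\psi$. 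Because $\rho_j$ is $x$-Fourier localized at scale $\lesssim 2^{j-2}$ and $\xi$-localized in $\{|\xi|\sim 2^j\}$, integration by parts in $\xi$ in the Schwartz kernel
\[
K_j(x,y) = \frac{1}{(2\pi)^d}\int e^{i(x-y)\cdot\xi}\rho_j(x,\xi)\,d\xi
\]
produces the decay $|K_j(x,y)|\lesssim M^m_r(a)\, 2^{j(m-r+d)}(1+2^j|x-y|)^{-N}$ for every $N\in\xN$. The Schur test then yields $\|\Op(\rho_j)\|_{L^2\to L^2}\lesssim M^m_r(a)\, 2^{j(m-r)}$. Combining this with $\|\Delta_j u\|_{L^2}\le 2^{-j\mu}\|u\|_{H^\mu}$ and with the spectral localization $\|R_ju\|_{H^{\mu-m+r}}\sim 2^{j(\mu-m+r)}\|R_j u\|_{L^2}$ gives the desired estimate.

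\textbf{Main obstacle.} The main technical point is the calibration of the paradifferential cutoff $\chi$ of Definition \ref{defi:para} with the dyadic cutoff $\psi(2^{-(j-3)}D_x)$, so that their difference genuinely acts as a bandpass filter at the uniform scale $2^{j-3}$ for every $\xi$ in the annulus $\{|\xi|\sim 2^j\}$ and simultaneously for all $j\in\xN^*$. This calibration is possible thanks to the flexibility in the definition of $T_a$ but requires careful bookkeeping of the support conditions to obtain $j$-independent constants. A secondary technicality is the handling of non-integer $r$ in Bernstein's inequality, which uses the same type of argument as in Lemma \ref{Bernstein}.
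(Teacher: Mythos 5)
Your proposal is correct in spirit and takes a somewhat different route from the paper. The paper also writes $R_ju=\Op(a_j)u$ with the symbol $a_j=\bigl(\chi(D_x,\xi)-\psi(2^{-(j-3)}D_x)\bigr)a(x,\xi)\varrho(\xi)\varphi_j(\xi)$, but it then splits $a_j=a_j^1+a_j^2$ by inserting $a(x,\xi)$ as an intermediary: $a_j^1=(\sigma_a-a)\varrho\varphi_j$ is handled by invoking M\'etivier's Proposition~5.8(ii) (the symbol $\sigma_a-a$ has order $m-r$), while $a_j^2=(a-S_{j-3}(a))\varrho\varphi_j$ is estimated directly by Littlewood--Paley. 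The paper then puts $a_j$ in $S^{m-r}_{1,1}$ with a spectral support condition and applies M\'etivier's Theorem~4.3.5 to get the Sobolev bound. You instead treat $\chi(D_x,\xi)-\psi(2^{-(j-3)}D_x)$ as a single bandpass filter, avoiding the intermediary step and the black-box $\Gamma^{m-r}_0$ result, and you prove the Sobolev estimate by a hands-on kernel estimate plus a Schur test plus the frequency localization of $\Delta_ju$ and $R_ju$. Both routes are valid; your approach is more self-contained, while the paper's is more modular.

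Two places in your write-up are glossed over and deserve care. First, for the spectral localization you assert that $\widehat{\rho_j(\cdot,\xi)}$ is supported in $\{|\theta|\lesssim 2^{j-2}\}$; what is actually needed to land $R_ju$ in an \emph{annulus} (as opposed to a ball) is a strict bound $|\theta|<\varepsilon|\xi|$ with $\varepsilon<1$, which holds here because $\chi(\theta,\xi)$ is supported in $\{|\theta|\le\varepsilon_2(1+|\xi|)\}$ with $\varepsilon_2<1$ (cf.\ \eqref{chi:prop}) and $\psi(2^{-(j-3)}\theta)$ vanishes for $|\theta|\geq 2^{j-2}\leq|\xi|/2$; this is exactly the computation the paper does in step~2. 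Second, the claim that "analogous estimates hold for $\partial_\xi^\beta\rho_j$ thanks to the smoothness of $\chi$ and $\psi$" is not automatic: by Leibniz one produces terms $(\partial_\xi^{\beta_1}\chi)(D_x,\xi)\partial_\xi^{\beta_2}a$ with $\beta_1\neq 0$ that no longer carry the bandpass difference, and hence do not \emph{a priori} gain the factor $2^{-jr}$. The gain survives because $\partial_\xi^{\beta_1}\chi(\theta,\xi)=-\sum_k(1-\psi(2^{-(k-3)}\theta))\partial_\xi^{\beta_1}\varphi_k(\xi)$ (using $\sum_k\partial_\xi^{\beta_1}\varphi_k=0$), so each such term vanishes for $|\theta|$ small compared to $2^j$; this observation should be made explicit. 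Finally, your "main obstacle" paragraph suggests the cutoffs can be calibrated by exploiting freedom in the definition of $T_a$; in fact the $\chi$ of Definition~\ref{defi:para} is already compatible with $\psi(2^{-(j-3)}D_x)$ by construction (the factor $\kappa_{k-3}$ is a dilate of $\psi$ at the same scale), so no additional freedom is needed.
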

\begin{proof} Recall first the definition \eqref{eq.para} of $T_au$, where we have $\varrho=1$ on the support of $\varphi_j$ for any $j\ge 1$, so
$$ R_ju=T_a\Delta_ju-S_{j-3}(a)(x, D_x)\varrho(D_x)\Delta_ju.$$
In the following proof, we shall use the presentation of M\'etivier \cite{MePise} on pseudo-differential and paradifferential operators. To be compatible with \cite{MePise} we also abuse  notations: by $\Gamma^m_r$ we denote the class of symbols $a$ satisfying the growth condition \eqref{para:symbol} for any $\xi\in \xR^d$ and by $M_0^m$ the semi-norm \eqref{defi:semi-norm} where the suppremum is taken over $\xi\in\xR^d$.\\
 \hk 1. By definition \eqref{eq.para} it holds that $T_a v=\Op(\sigma_a\varrho)v$,
where $\Op(\sigma_a\varrho)$ denotes the classical pseudo-differential operator with symbol
\[
\sigma_a(x, \xi)\varrho(\xi)=\chi(D_x, \xi)a(x,\xi)\varrho(\xi).
\]
Hence $R_ju=\Op(a_j)u$ with
\[
a_j(x, \xi)=\sigma_a(x, \xi)\varrho(\xi)\varphi_j(\xi)-S_{j-3}(a)(x, \xi)\varrho(\xi)\varphi_j(\xi).
\]
 Now, we write 
\[
a_j=\big(\sigma_a\varrho\varphi_j-a\varrho\varphi_j\big)+\big (a\varrho\varphi_j- S_{j-3}(a)\varrho\varphi_j\big)=a_j^1+a_j^2.
\]
Applying Proposition $5.8 (ii)$ in \cite{MePise} gives $a_j^1\in \Gamma^{m-r}_0$ and (remark that $(\varphi_j)_j$ is bounded in $\Gamma^0_r$)
\[
M_0^{m-r}(a_j^1)\le CM_r^m(a\varrho\varphi_j)\le CM^m_r(a\rho).
\]
On the other hand, if we denote  $b=a\varrho\varphi$ then $a_j^2(x, \xi)=b(x,\xi)-\psi_{j-3}(D_x, \xi)b(x, \xi)$. Taking into account the fact that $\supp \varphi_j\subset B(0, C2^j)$  we may estimate
\begin{align*}
|a_j^2(x, \xi)|&\le \sum_{k\ge j-2}|\Delta_jb(x, \xi)|\le \sum_{k\ge j-2}
2^{-kr}\lA b(\cdot, \xi)\rA_{W^{r, \infty}}\\
&\le C2^{-jr}\lA b(\cdot, \xi)\rA_{W^{r, \infty}}=C2^{-jr}|\varphi_j(\xi)|\lA a(\cdot, \xi)\varrho(\xi)\rA_{W^{r, \infty}}\\
&\le C(1+|\xi|)^{m-r}M^m_r(a\varrho),\quad \forall \xi\in \xR^d.
\end{align*}
Estimates for $|\partial^\alpha_\xi a_j^2|$ can be derived along the same lines. Thus, $a_j^2\in\Gamma^{m-r}_0$ and hence $a_j\in \Gamma^{m-r}_0$; moreover 
\[
 M^{m-r}_0(a_j)\le CM^{m}_r(a\varrho).
\]
\hk 2.  
Property \eqref{chi:prop} implies in particular that
\[
\mathfrak{F}_x(\sigma_a)(\eta, \xi)=0~ \text{for}~|\eta|\ge \eps_2(1+|\xi|).
\]
Here, we denote $\mathfrak{F}_x$ the Fourier transform with respect the the patial variable $x$.\\
On the other hand, by definition of the smoothing operator 
\[
\mathfrak{F}_x S_{j-3}(a)(x, \xi)\varrho(\xi)\varphi_j(\xi)=\psi(2^{-(j-3)}\eta)\mathfrak{F}_xa(\eta, \xi)\varrho(\xi)\varphi(2^{-j}\xi)
\]
which is vanishing if $|\eta|\ge \mez(1+|\xi|)$. Indeed, if either $|\xi|> 2^{j+1}$ or $|\xi|\le 2^{j-1}$ then $\varphi(2^{-j}\xi)=0$. Considering $2^{j-1}<|\xi|\le 2^{j+1}$ then $|\eta|\ge \mez(1+|\xi|)>2^{j-2}$ and thus $\psi(2^{-(j-3)}\eta)=0$. We have proved the existence of $0<\eps<1$ such that 
\bq\label{spectral:a_j}
\mathfrak{F}_x a_j(\eta, \xi)=0 ~\text{for}~|\eta|\ge \eps(1+|\xi|).
\eq
\hk 3. By the spectral property \eqref{spectral:a_j} one can use the Bernstein's inequalities (see Corollary $4.1.7$, \cite{MePise}) to prove
  that $a_j$ is a pseudo-differential symbol in the class $S^{m-r}_{1,1}$. Then, applying Theorem $4.3.5$ in \cite{MePise} we conclude that 
\[
\lA R_ju\rA_{H^{\mu-m+r}(\xR^d)}=\lA \Op(a_j)u\rA_{H^{\mu-m+r}(\xR^d)}\le CM_0^{m-r}(a_j)\lA u\rA_{H^\mu(\xR^d)}.
\]
Finally, the Fourier transform of $R_ju$  reads
\[
\mathfrak{F}(R_ju)(\xi)=\int_{\xR^d}\mathfrak{F}_x(a_j)(\xi-\eta, \eta)\hat{u}(\eta)d \eta.
\]
Using the spectral localization property \eqref{spectral:a_j} and the fact that $\mathfrak{F}_x(a_j)(\xi-\eta, \eta)$ contains the factor $\varphi_j(\eta)$ we conclude that the spectrum of $R_ju$ is contained in an annulus of size $~2^j$ as claimed.
\end{proof}
\subsection{A result  of Koch-Tataru}
In this paragraph, we recall the dispersive estimates proved by Koch-Tataru \cite{KT} based on the technique of FBI transform on phase space. These estimates were established for the following class of operators.
\begin{defi}\label{defi:symbolclass}
For $\ld >1$, $m\in \xR$ and $k=0, 1, ...$ consider classes of symbols $p:T^*\xR^d\to \xC$, denoted by $\ld ^mS^k_\ld$, which satisfy
\bq\label{symbolKT}
\begin{gathered}
\la \partial_x^\alpha\partial_\xi^\beta p( x, \xi)\ra\le c_{\alpha,\beta}\ld^{m-|\beta|}\quad |\alpha|\le k,\\
\la \partial_x^\alpha\partial_\xi^\beta p( x, \xi)\ra\le c_{\alpha,\beta}\ld^{m+\frac{|\alpha|-k}{2}-|\beta|} \quad |\alpha|\ge k.
\end{gathered}
\eq
\end{defi}
The mentioned result reads
\begin{prop}[\protect{\cite[Proposition 4.7]{KT}}]\label{theo:KT}
 Let $p(\sigma, x, \xi)\in \ld S^2_\ld$ be a real symbol in $(x, \xi)$, uniformly in $\sigma\in [0, 1]$. Assume that $p$ satisfies the following curvature condition\\
({\bf A}) for each $(\sigma, x, \xi)\in [0, 1]\times\xR^d\times \Cr_\ld$,  $|\det \partial_\xi^2 p|\gtrsim \ld^{-d}$, where $\Cr_\ld=\{c^{-1}\ld\le |\xi|\le c\ld\}$.\\
Denote by $S(\sigma, \sigma_0)$ the flow maps of $D_\sigma +\Op^w(p)$. Then for any $\chi\in S^0_\ld$ such that  for all $x\in \xR^d$, $\chi(x, \cdot)$ compactly supported in $\Cr'_\ld=\{c'^{-1}\ld\le |\xi|\le c'\ld\}$with $1<c'<c$, we have
\[
\lA S(\sigma, \sigma_0)(\chi(x, D_x)v_0)\rA_{L^\infty}\les \ld^{\frac{d}{2}}|\sigma-\sigma_0|^{\frac{-d}{2}}\lA v_0\rA_{L^1}\quad \forall \sigma,~\sigma_0\in [0, 1].
\]
\end{prop}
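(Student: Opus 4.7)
The plan is to use the FBI (wave packet) transform at the frequency scale $\lambda$ to conjugate the propagator of $D_\sigma+\Op^w(p)$ into a transport along the bicharacteristic flow on phase space, and then to extract the $L^1\to L^\infty$ decay from a stationary phase argument driven by the fiber-non-degeneracy hypothesis (\textbf{A}). The class $\lambda S^2_\lambda$ is designed precisely so that this strategy is viable: derivatives of order $\le 2$ respect the natural wave packet scale ($\lambda^{-1/2}$ in $x$, $\lambda$ in $\xi$), while higher derivatives are allowed to grow at the rate $\lambda^{1/2}$ per $x$-derivative, which is the threshold for a clean symbol calculus.

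First I would introduce the FBI transform
\[
Tf(x,\xi) = c_d\,\lambda^{3d/4}\int_{\xR^d} e^{i\xi\cdot(x-y)-\frac{\lambda}{2}|x-y|^2} f(y)\,dy,
\]
together with its adjoint $T^*$, which satisfy $T^* T = \operatorname{Id}$ on $L^2(\xR^d)$ and, after bounding Gaussian tails via Cauchy-Schwarz and Young, $\lA T^* g\rA_{L^\infty}\les \lambda^{d/2}\lA g\rA_{L^2}$. The symbolic calculus of Koch-Tataru adapted to $T$ then yields
\[
T\circ\bigl(D_\sigma+\Op^w(p)\bigr) \;=\; \bigl(D_\sigma + p + L(p)\bigr)\circ T \;+\; E,
\]
where $L(p)$ is a first-order operator on phase space whose transport field is essentially the Hamiltonian vector field of $p$, and the remainder $E$ is bounded uniformly in $\lambda$ and $\sigma\in[0,1]$ thanks to the $k=2$ regularity. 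Solving the transported equation along the bicharacteristics of $p$ produces a closed-form parametrix for $T\circ S(\sigma,\sigma_0)$.

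Applying $T^*$ to this parametrix and inserting the frequency localization supplied by $\chi\in S^0_\lambda$, one arrives at an oscillatory integral representation
\[
S(\sigma,\sigma_0)(\chi(x,D_x)v_0)(x) = \int_{\xR^{2d}} e^{i\Phi(\sigma,\sigma_0;x,y,\xi)}\,a(\sigma,\sigma_0;x,y,\xi)\,v_0(y)\,dy\,d\xi,
\]
with amplitude $a$ supported in $\{|\xi|\sim\lambda\}$ and phase $\Phi$ generated by the flow, so that to leading order $\partial_\xi^2 \Phi = (\sigma-\sigma_0)\,\partial_\xi^2 p + O(|\sigma-\sigma_0|^2)$. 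Hypothesis (\textbf{A}) now delivers $|\det\partial_\xi^2\Phi|\gs|\sigma-\sigma_0|^d\lambda^{-d}$, and non-degenerate stationary phase in $\xi$ (over a set of volume $\sim\lambda^d$) produces the pointwise bound $\bigl|\int e^{i\Phi}a\,d\xi\bigr|\les\lambda^{d/2}|\sigma-\sigma_0|^{-d/2}$, uniform in $x,y$. Integrating against $v_0$ yields the claimed dispersive estimate.

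The main obstacle is the error analysis in the symbolic calculus: one must control $E$ with enough precision that Duhamel's formula preserves the sharp constant $\lambda^{d/2}|\sigma-\sigma_0|^{-d/2}$. This requires tracking the quadratic Taylor remainder in the composition $T\circ\Op^w(p)$ (precisely what the assumption $k=2$ in Definition \ref{defi:symbolclass} permits) and bounding the off-diagonal Gaussian tails of $T$ to produce $O(\lambda^{-N})$ losses that can be absorbed. A secondary point is to verify that the bicharacteristic flow of $p$ remains inside a slightly enlarged annulus $\Cr_\lambda$ for $|\sigma-\sigma_0|\le 1$, which follows from the $L^\infty$ bound on $\partial_x p$ built into $\lambda S^2_\lambda$, and ensures that $\chi$ transported by the flow stays in $S^0_\lambda$ so that the amplitude $a$ is a legitimate symbol.
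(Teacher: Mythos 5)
The paper does not prove Proposition \ref{theo:KT}: it is imported verbatim from \cite[Proposition~4.7]{KT}, and the only original content here is the remark following the statement, which explains why the version proved in \cite{KT} — with condition (\textbf{A}) stated on a ball $B_\lambda=\{|x|\le 1,\ |\xi|\le\lambda\}$, $\chi$ supported in a unit ball, and the Weyl quantization $\chi^w$ — transfers to the global-in-$x$ form with the standard quantization $\chi(x,D_x)$ used in this paper. There is therefore no paper proof to compare against; the comparison must be with the argument of \cite{KT} itself.

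With that understood, your sketch does correctly capture the Koch--Tataru strategy: conjugate $D_\sigma+\Op^w(p)$ by a $\lambda$-scaled FBI transform to obtain, up to a uniformly bounded error, a first-order transport operator on phase space whose vector field is the Hamiltonian flow of $p$; solve this transport equation explicitly; push back with $T^*$ to obtain an oscillatory-integral parametrix; and read off the kernel bound $\lambda^{d/2}|\sigma-\sigma_0|^{-d/2}$ from non-degenerate stationary phase in $\xi$, with the Hessian of the phase controlled from below by (\textbf{A}). The role you assign to the class $\lambda S^2_\lambda$ — two wave-packet-scale derivatives to close the symbol calculus, higher $x$-derivatives paying $\lambda^{1/2}$ each — is the correct heuristic. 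Two imprecisions, neither fatal: your FBI normalization $\lambda^{3d/4}$ goes with the phase $i\lambda\xi\cdot(x-y)$ (so that $\xi$ lives in a unit annulus), not the phase $i\xi\cdot(x-y)$ you wrote (which would require $\lambda^{d/4}$); and the intermediate claim $\lA T^*g\rA_{L^\infty}\lesssim\lambda^{d/2}\lA g\rA_{L^2}$ is not correct as stated without restricting the $\xi$-integration to the compact region carrying the transported amplitude, though in any case the final $L^1\to L^\infty$ bound comes directly from the kernel estimate, not from a Cauchy--Schwarz step. Finally, you do not address the one point the paper actually argues, namely why replacing the compactly supported localization and Weyl cutoff of \cite{KT} by a global condition (\textbf{A}) and a standard quantization is harmless; since that is the paper's entire contribution to this proposition, a complete answer should at least note that the FBI-phase-space argument is local in $x$ and insensitive to the choice of quantization of the cutoff modulo lower-order errors.
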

\begin{rema}
In the statement of Proposition $4.7$, \cite{KT}, condition $(\bf A)$ is stated for   $(x, \xi)\in B_\ld:=\{|x|\le 1,~|\xi|\le \ld\}$ and correspondingly, $\chi$ is supported in $B$; in addition, the usual quantization $\chi(x, D_x)$ is replaced by the Weyl quantization $\chi^w$. However, one can  easily inspect its proof to see that if $(\bf A)$ is fulfilled globally in $x$ then we have the above variant.
\end{rema}
\subsection{Remarks on the symbolic calculus for $\lambda^mS^k_\lambda$}
Let $a\in \lambda^mS^k_m,~k\ge 0$ and suppose that on the support of $a(x,\xi)$,~$\lambda^{-1}|\xi|\sim 1$. It follows by definition of $\lambda^mS^k_\lambda$ that $a\in S^m_{1,\mez}$. Observe however that when $k\ge 1$, $a$ behaves better than a general symbol in  $S^m_{1,\mez}$. In this paragraph we present some enhanced properties of $\lambda^mS^k_\lambda$ with $k\ge 1$.\\
First, we are concerned with the relation between the the Weyl quantization and the usual quantization. According to Proposition \ref{expand}, for $a\in S^m_{1, \mez}$ there holds
\[
\Op^w(a)-\Op(a)=\Op(r),\quad r\in S^{m-\mez}_{1, \mez}.
\]
In fact, we have
\[
\Op^w(a)=\Op(\widetilde a),\quad \widetilde a(x, y, \xi)=a(\frac{x+y}{2}, \xi)
\]
and
\[
 \widetilde a(x, y, \xi)=a(x+\frac{y-x}{2}, \xi)=a(x, \xi)+\mez\int_0^1(\partial_xa)(x+s\frac{y-x}{2}, \xi)ds\, (y-x).
\]
It follows that
\bq\label{toW}
\Op^w(a)-\Op(a)=\Op(r),
\eq
with
\bq\label{toW:remainder}
r(x, y, \xi)=-\frac{i}{2}\int_0^1(\partial_\xi\partial_x a)(x+s\frac{y-x}{2}, \xi)ds.
\eq
We now show that in fact $r$ is of order $m-1$ as in the case $a\in S^m_{1,0}$.
\begin{lemm}\label{toW:improve}
Let $a\in \lambda^mS^k_\ld,~k\ge 1$ satisfying $\lambda^{-1}|\xi|\sim 1$ on the support of $a(x,\xi)$. Then we have the relation \eqref{toW}-\eqref{toW:remainder} with $r\in S^{m-1}_{1,\mez, \mez}$.
\end{lemm}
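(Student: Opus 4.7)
The plan is to take $r$ as given by \eqref{toW:remainder} and verify the symbol class membership directly, by computing how derivatives act on the integrand and applying the two-regime estimate in \eqref{symbolKT}. The support condition $\lambda^{-1}|\xi|\sim 1$ on $a(x,\xi)$ passes to all derivatives of $a$ (since derivatives vanish where $a$ does), and under that condition the growth bound $\lambda^{m-|\beta|}$ and $\lambda^{m+(|\alpha|-k)/2-|\beta|}$ can be rewritten as $(1+|\xi|)^{\ldots}$, which is the form required for $S^{m-1}_{1,\mez,\mez}$.

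The first step is a chain-rule computation. Since $r(x,y,\xi)$ depends on $(x,y)$ only through the affine combination $x+s(y-x)/2$, a derivative $\partial_{x_i}$ or $\partial_{y_i}$ applied to the integrand produces an additional $\partial_{x_i}$ on $a$ together with the bounded factor $(1-s/2)$ or $s/2$. Hence for every $\alpha,\beta,\gamma\in\xN^d$, $\partial_x^\alpha\partial_y^\beta\partial_\xi^\gamma r(x,y,\xi)$ is a finite linear combination (with $s$-dependent bounded coefficients) of
\[
\int_0^1(\partial_x^{\alpha'+\beta'+1}\partial_\xi^{\gamma+1}a)\bigl(x+s(y-x)/2,\xi\bigr)\,ds
\]
with $\alpha'\le\alpha$, $\beta'\le\beta$; the worst case for our purposes is $\alpha'=\alpha$, $\beta'=\beta$, so I may assume the integrand is $(\partial_x^{|\alpha|+|\beta|+1}\partial_\xi^{|\gamma|+1}a)$.

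The second step is the two-case estimate. Set $N=|\alpha|+|\beta|+1$ and $M=|\gamma|+1$. If $N\le k$, \eqref{symbolKT} yields
\[
|\partial_x^N\partial_\xi^M a|\lesssim \lambda^{m-M}\lesssim (1+|\xi|)^{m-1-|\gamma|},
\]
which is dominated by the target bound $(1+|\xi|)^{m-1+\mez|\alpha|+\mez|\beta|-|\gamma|}$. If $N>k$, the second line of \eqref{symbolKT} gives
\[
|\partial_x^N\partial_\xi^M a|\lesssim \lambda^{m+(N-k)/2-M}\lesssim (1+|\xi|)^{m-1+\mez(|\alpha|+|\beta|)-|\gamma|+\mez(1-k)},
\]
and since $k\ge 1$ the factor $\mez(1-k)\le 0$, so again we stay inside the target class. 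Combining the two regimes and noting that $\alpha,\beta,\gamma$ were arbitrary proves $r\in S^{m-1}_{1,\mez,\mez}$.

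The only genuine subtlety, and the place I expect to have to be careful, is the interplay between the two regimes of \eqref{symbolKT}: one must take $k\ge 1$ rather than $k\ge 0$ precisely so that the extra half-derivative loss in $x$ coming from the $S^k_\lambda$ scaling is compensated by the single $\partial_x$ already sitting inside $r$; this is where the lemma improves upon the generic gain of $\mez$ available from $S^m_{1,\mez}$ and upgrades it to a full gain of $1$ in $\xi$. Everything else is bookkeeping on the Leibniz expansion and the fact that the cut-off in $\xi$ inherited from the support hypothesis on $a$ lets us freely trade $\lambda$ for $1+|\xi|$.
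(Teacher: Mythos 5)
Your proposal is correct and follows essentially the same argument as the paper: apply derivatives to the remainder formula \eqref{toW:remainder}, use the two regimes of \eqref{symbolKT} for the resulting $\partial_x^{|\alpha|+|\beta|+1}\partial_\xi^{|\gamma|+1}a$, and observe that $k\ge 1$ makes the exponent $\mez(|\alpha|+|\beta|+1-k)\le\mez(|\alpha|+|\beta|)$, exactly the inequality the paper records. The only cosmetic remark is that because the $(x,y)$-dependence in $r$ is through the single affine combination $x+s(y-x)/2$, the chain rule produces a single term (with bounded $s$-dependent coefficient) rather than a genuine Leibniz sum, but this does not affect the estimate.
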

\begin{proof}
For all $\alpha, \beta, \nu\in \xN^d$ we have
\[
|\partial_x^\alpha\partial_y^\beta \partial_\xi^\nu r(x, y, \xi)|\le 
\begin{cases}
&C_{\alpha, \beta, \nu}\lambda^{m-|\nu|-1}\mathbbm{1}_{\lambda^{-1}|\xi|\sim 1}(\xi)\quad\text{if}~|\alpha|+|\beta|+1\le k,\\
&C_{\alpha, \beta, \nu}\lambda^{m+\frac{|\alpha|+1+|\beta|-k}{2}-|\nu|-1}\mathbbm{1}_{\ld^{-1}|\xi|\sim 1}(\xi)\quad\text{if}~|\alpha|+|\beta|+1\ge k.
\end{cases}
\]
Since $k\ge 1$,
\[
\frac{|\alpha|+1+|\beta|-k}{2}\le \frac{|\alpha|+|\beta|}{2}
\]
Consequently, it holds that
\[
|\partial_x^\alpha\partial_y^\beta \partial_\xi^\nu r(x, y, \xi)|\le C_{\alpha, \beta, \nu}(1+|\xi|)^{(m-1)+\frac{|\alpha|+|\beta|}{2}-|\nu|}\quad \forall \alpha, \beta, \nu\in \xN^d.
\]
In other words,  $r\in S^{m-1}_{1,\mez, \mez}$. 
\end{proof}
For the composition rule we prove the following lemma.
\begin{lemm}\label{compose:improve}
Let $p\in S^n_{1, 0}$ and $a\in \lambda^mS^k_\ld,~k\ge 1$ satisfying $\lambda^{-1}|\xi|\sim 1$ on the support of $a(x,\xi)$. Then we have
\[
\Op(p)\circ \Op(a)-\Op(pa)=\Op(r)
\]
with $r\in S^{n+m-1}_{1, \mez}$.
\end{lemm}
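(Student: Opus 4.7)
My approach uses the standard composition formula for pseudo-differential operators in the H\"ormander class $S^m_{1,\mez}$, combined with the observation---analogous to Lemma~\ref{toW:improve}---that the subprincipal term actually gains a full power of $\lambda$ rather than just a half power, by virtue of the hypothesis $k\ge 1$.

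Since $\lambda^{-1}|\xi|\sim 1$ on $\supp a$, the defining inequalities \eqref{symbolKT} yield immediately $a\in S^m_{1,\mez}$, and together with $p\in S^n_{1,0}\subset S^n_{1,\mez}$, Proposition~\ref{expand} (or Theorem~$4.3.5$ of \cite{MePise}) gives $\Op(p)\circ\Op(a)=\Op(c)$ where
\begin{equation*}
c(x,\xi)\sim \sum_{\alpha}\frac{(-i)^{|\alpha|}}{\alpha!}\,\partial_\xi^\alpha p(x,\xi)\,\partial_x^\alpha a(x,\xi)
\end{equation*}
in the sense of asymptotic expansions in $S^\bullet_{1,\mez}$. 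Truncating this expansion at order $N=2$ leaves a remainder in $S^{n+m-1}_{1,\mez}$ by the standard $(1,\mez)$-calculus, and the $\alpha=0$ term is $pa$; it therefore suffices to prove $\partial_\xi^\alpha p\cdot\partial_x^\alpha a\in S^{n+m-1}_{1,\mez}$ for each multi-index $\alpha$ with $|\alpha|=1$.

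For such $\alpha$ I would apply Leibniz to $\partial_x^\gamma\partial_\xi^\delta(\partial_\xi^\alpha p\cdot\partial_x^\alpha a)$, obtaining a finite sum of products of the form $\partial_x^{\gamma_1}\partial_\xi^{\delta_1+\alpha}p\cdot\partial_x^{\gamma_2+\alpha}\partial_\xi^{\delta_2}a$ with $\gamma_1+\gamma_2=\gamma$ and $\delta_1+\delta_2=\delta$. The first factor is bounded by $C\lambda^{n-1-|\delta_1|}$ because $p\in S^n_{1,0}$, while by Definition~\ref{defi:symbolclass} the second is bounded by $C\lambda^{m-|\delta_2|}$ when $|\gamma_2|+1\le k$ and by $C\lambda^{m+(|\gamma_2|+1-k)/2-|\delta_2|}$ otherwise. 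The first case gives at once $C\lambda^{n+m-1-|\delta|}$; the second gives $C\lambda^{n+m-(1+k-|\gamma_2|)/2-|\delta|}$, which is also bounded by $C\lambda^{n+m-1+\mez|\gamma|-|\delta|}$ thanks to the inequality $|\alpha|+k=1+k\ge 2$. Since the product is supported where $|\xi|\sim\lambda$ one may replace $\lambda$ by $(1+|\xi|)$ on the support and extend trivially by zero elsewhere, obtaining the symbolic estimate required by $S^{n+m-1}_{1,\mez}$.

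The main and essentially only difficulty is the arithmetic of the second case: the loss incurred by over-differentiating $a$ in $x$ has to be absorbed by the gain coming from $\partial_\xi p$ together with the half-derivative budget $\mez|\gamma|$ permitted by the target class, and this is precisely where the hypothesis $k\ge 1$ enters. The reduction to $(1,\mez)$-calculus, the truncation at $N=2$, and the first-case bound are routine.
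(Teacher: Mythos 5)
Your proposal is correct and follows essentially the same route as the paper: the asymptotic composition expansion from the $(1,\mez)$-calculus, truncation after the $|\alpha|=1$ terms, and the reduction to showing $\partial_\xi^\alpha p\cdot\partial_x^\alpha a\in S^{n+m-1}_{1,\mez}$ for $|\alpha|=1$. Your explicit Leibniz computation (where $k\ge 1$ absorbs the over-differentiation loss) is precisely the verification that the paper delegates to ``the same lines as in the proof of Lemma~\ref{toW:improve}.''
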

\begin{proof}
According to Proposition 0.3.C, \cite{Taylor}, one has $\Op(p)\circ \Op(a)=\Op(b)$ with 
\[
b\sim \sum_{|\alpha|\ge 0}\frac{(-i)^{|\alpha|}}{\alpha!}\partial_\xi^\alpha p(x, \xi)\cdot\partial_x^\alpha a (x, \xi)
\]
in the sense of symbol asymptotic. The general term in the above expansion belongs to $S^{n+m-\frac{|\alpha|}{2}}_{1, \mez}$, hence 
\[
(b-pa)-\sum_{|\alpha|= 1}\frac{(-i)^{|\alpha|}}{\alpha!}\partial_\xi^\alpha p(x, \xi) \cdot\partial_x^\alpha a (x, \xi)\in S^{n+m-1}_{1, \mez}.
\]
It then suffices to show that $c_\alpha:=\partial_\xi^\alpha p(x, \xi)\cdot \partial_x^\alpha a (x, \xi)\in S^{n+m-1}_{1, \mez}$ for $|\alpha|=1$ or again, $\partial_x^\alpha a (x, \xi)\in S^m_{1, \mez}$ for $|\alpha|=1$. The later follows along the same lines as in the proof of Lemma \ref{toW:improve}.
\end{proof} 
In the same spirit we have the following result on adjoint operators, taking into account  Proposition 0.3.B, \cite{Taylor}.
\begin{lemm}\label{adjoint:improve}
Let $a\in \lambda^mS^k_\ld,~k\ge 1$ satisfying $\lambda^{-1}|\xi|\sim 1$ on the support of $a(x,\xi)$. Then we have
\[
\Op^*(a)-\Op(\bar a)=\Op(r)
\]
with $r\in S^{m-1}_{1, \mez}$ and $\bar a$ is the complex conjugate of $a$.
\end{lemm}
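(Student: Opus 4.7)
My plan is to mirror exactly the two-step pattern used in the proofs of Lemmas \ref{toW:improve} and \ref{compose:improve}: apply the standard asymptotic expansion of the adjoint symbol on $S^m_{1,\mez}$, and then upgrade the single first-order correction from the generic order $m-\mez$ to the sharper $m-1$ by exploiting the extra $x$-regularity intrinsic to $\lambda^m S^k_\ld$ when $k\ge 1$.

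Concretely, since $a\in \lambda^m S^k_\ld\subset S^m_{1,\mez}$ and $\mez<1$, Proposition~0.3.B of \cite{Taylor} applies and yields $\Op^*(a)=\Op(a^*)$ with
\[
a^*(x,\xi)\sim \sum_{|\alpha|\ge 0}\frac{(-i)^{|\alpha|}}{\alpha!}\,\partial_x^\alpha \partial_\xi^\alpha \bar a (x,\xi),
\]
the general term in this expansion belonging to $S^{m-|\alpha|/2}_{1,\mez}$. The tail with $|\alpha|\ge 2$ is thus already in $S^{m-1}_{1,\mez}$, and the question reduces to showing that for $|\alpha|=1$ the term $\partial_x^\alpha \partial_\xi^\alpha \bar a$ lies in $S^{m-1}_{1,\mez}$ rather than merely in $S^{m-\mez}_{1,\mez}$; this is exactly the same gap to be bridged as in Lemma \ref{compose:improve}.

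The key is the observation, already used in Lemma \ref{toW:improve}, that under the hypothesis $k\ge 1$ together with the support condition $\lambda^{-1}|\xi|\sim 1$, one has $\partial_{x_j}\bar a\in S^m_{1,\mez}$ for every $j$. Indeed, the two-tier bound \eqref{symbolKT} gives $|\partial_x^{\alpha'+e_j}\partial_\xi^\beta \bar a|\lesssim \lambda^{m-|\beta|}$ when $|\alpha'|+1\le k$ and $\lesssim \lambda^{m+(|\alpha'|+1-k)/2-|\beta|}$ otherwise, and the inequality $(|\alpha'|+1-k)/2\le |\alpha'|/2$, valid precisely because $k\ge 1$, together with $\lambda\sim 1+|\xi|$ on the support, collapses both bounds into $(1+|\xi|)^{m+|\alpha'|/2-|\beta|}$. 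Taking one further $\xi$-derivative then drops the order to $m-1$, giving $\partial_\xi^\alpha \partial_x^\alpha\bar a \in S^{m-1}_{1,\mez}$ for $|\alpha|=1$ and closing the argument. I do not foresee any substantial obstacle: the proof is a direct transcription of the trick already implemented in Lemmas \ref{toW:improve} and \ref{compose:improve}, and the only delicate point is a careful bookkeeping of the exponents in \eqref{symbolKT} so as to pinpoint where the hypothesis $k\ge 1$ is used.
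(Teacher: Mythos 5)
Your proof is correct and is essentially exactly what the paper intends: the paper does not spell out an argument but states that Lemma~\ref{adjoint:improve} holds ``in the same spirit'' as Lemma~\ref{compose:improve}, citing Taylor's Proposition~0.3.B, and your two steps (invoke Taylor's adjoint expansion, then upgrade the $|\alpha|=1$ term from order $m-\mez$ to $m-1$ via the same $\partial_x a\in S^m_{1,\mez}$ observation used in Lemmas~\ref{toW:improve} and~\ref{compose:improve}) make that implicit argument explicit. The bookkeeping of the two-tier bound \eqref{symbolKT} and the role of $k\ge 1$ are handled correctly.
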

\begin{nota}
 Throughout this article, we write $A\les B$ if there exists a constant $C>0$ such that $A\le CB$, where $C$ may depend on the coefficients of the equations under consideration. If the constant $C$ involved has some explicit dependency, say, on some quantity $\mu$, we emphasize this by denoting $A\les_\mu B$.
\end{nota}
\section{Proof of Theorem \ref{main:theo:0}}
Throughout this section, the dimension $d$ is greater than or equal $2$ and $s>\frac{5}{2}+\frac{d}{2}$ is a Sobolev index.
\subsection{Littlewood-Paley reduction}
We shall prove Strichartz estimates for solution $u$ to \eqref{ww:reduce:0}, which is a quasilinear paradifferential equation with time-dependent coefficients. Remark that since
\[
(\eta, \psi)\in C^0([0, T]; H^{s+\mez}\times H^s),
\]
we have
\[
V\in C^0([0, T]; H^{s-1}),\quad \gamma(\cdot, \xi)\in C^0([0, T]; H^{s-\mez}).
\]
The first step in our proof consists in localizing \eqref{ww:reduce:0} at frequency $2^j$ using the Littlewood-Paley  decomposition (cf. Definition \ref{functionalspaces} 1.).  For every $j\geq 0$, the dyadic piece~$\Delta_ju$ solves 
\begin{equation}\label{eq:dyadic:0}
	\lp\partial_t+T_V\cdot\nabla +iT_{\gamma+\omega} \rp\Delta_ju=F_j,
\end{equation}
with
\begin{equation}\label{Fj1:0}
	F^1_j:=\Delta_jf -i\Delta_j (T_{\omega_1} u)+i\lb T_\gamma,\Delta_j\rb+i\lb T_\omega,\Delta_j\rb u+\lb T_V,\Delta_j\rb\cdot\nabla u.
\end{equation}
Remark that for each $j\ge 1$, $\Delta_j u$ is spectrally supported in $\{2^{j-1}\le |\xi|\le 2^{j+1}\}$. In views of  Proposition \ref{para:pseudo} and the facts that  $\gamma\in \Gamma^{\tdm}_{\tdm},~\omega\in \Gamma^{\mez}_{\mez}$ and $V\cdot \xi\in \Gamma^1_1$, equation \eqref{eq:dyadic:0} can be rewritten as
\begin{equation} \label{eq:pseudo:0}
	\lp\partial_t+S_{j-3}(V)\cdot\nabla+iS_{j-3}(\gamma)(x, D_x) \rp\Delta_ju=F^2_j,
\end{equation}
with
\bq\label{Fj2:0}
F^2_j:=F^1_j +R_j,
\eq
$R_ju$ is spectrally supported in an annulus $\{ c_1^{-1}2^{j-1}\le |\xi|\le c_12^{j+1}\}$ and satisfies 
\bq\label{bound:R:0j}
\lA R_j\rA_{H^s}\les \lA u\rA_{H^s}.
\eq
Next, we use \eqref{smoothout} to smooth out the symbols by $\delta=\mez$ derivative.\\
Now, let $\mez<c_1<c_2<c_3$, $\Cr^k:=\{(2c_k)^{-1}\le |\xi|\le 2c_k\},~k=1,2,3$ and 
\[
\tph\in C^\infty,~\supp\tph\subset \Cr^3,~\tph\equiv 1~\text{on}~\Cr^2.
\]
Then, equation~(\ref{eq:pseudo:0}) is equivalent to
\begin{multline} \label{eq:reg:0}
	\mathcal{L}_j\Delta_ju:=\lp\partial_t+ S_{{(j-3)}\mez}(V)\cdot\nabla\tph(2^{-j}D_x)\right. \\
\left.+iS_{{(j-3)}\mez}(\gamma)(x, D_x)\tph(2^{-j}D_x)+iS_{{(j-3)}\mez}(\omega)(x, D_x)\tph(2^{-j}D_x)\rp\Delta_ju=F_j,
\end{multline}
 with
\begin{multline}	\label{Fj:0}
	F_j=F_j^2+F_j^3:=F_j^2+i\lp S_{{(j-3)}\mez}\gamma(x,D_x)-S_{j-3}\gamma(x, D_x)\rp\Delta_ju\\+i\lp S_{{(j-3)}\mez}\omega(x,D_x)-S_{j-3}\omega(x, D_x)\rp\Delta_ju 
	      +\lp S_{{(j-3)}\mez}(V)-S_{j-3}(V)\rp\cdot\nabla \Delta_ju.
\end{multline}
\subsection{Semi-classical time scale}
Observe that the highest order operator on the left-hand side of \eqref{eq:pseudo:0} has order $\tdm$, which does not match the result in \cite{KT} that we want to apply. Therefore, we reduce it to  an operator of order $1$ by multiplying both side by $h^{\mez}$, where 
\[
h:=2^{-j},
\] 
then making a change of temporal variables $t=h^\mez \sigma$. For this purpose, let us reset the symbols in this new time scale:
\begin{gather}
\Gamma_h(\sigma, x, \xi)=h^\mez S_{{(j-3)}\mez}(\gamma)(h^\mez \sigma, x, \xi)\tph(h\xi),\\
\Omega_h(\sigma, x, \xi)=h^\mez S_{{(j-3)}\mez}(\omega)(h^\mez \sigma, x, \xi)\tph(h\xi),\\
V_h(\sigma, x, \xi)=h^\mez S_{{(j-3)}\mez}(V)(h^\mez \sigma, x)\cdot\xi\tph(h\xi).
\end{gather}
Next, set 
\[
w_h(\sigma, x)=\Delta_j u(h^\mez \sigma, x),\quad G_h(\sigma, x)=-ih^\mez F_j(h^\mez\sigma, x). 
\]
Equation \eqref{eq:reg:0} is then equivalent to 
\bq\label{eq:wh:0}
\lp D_t+\Gamma_h(\sigma, x, D_x)+\Omega_h(\sigma, x, D_x)+V_h(\sigma, x, D_x)\rp w_h(\sigma, x)= G_h(\sigma, x).
\eq
In what follows, we shall prove the classical Strichartz estimate for \eqref{eq:wh:0}, from which the  semi-classical Strichartz estimate for \eqref{eq:reg:0} follows.\\
We now replace the pseudo-differential operators in \eqref{eq:wh:0} by the corresponding  Weyl operators using Proposition \ref{expand}. Noticing that $\omega=-\frac{i}{2}\partial_\xi\cdot\partial_x\gamma$, we set
\bq\label{R1R2}
\begin{aligned}
&R_h^1:=\big(\Op(\Gamma_h)+\Op(\Omega_h)\big)-\Op^w(\Gamma_h),\\
&R_h^2:=\Op(V_h)-\Op^w(V_h).
\end{aligned}
\eq
With these notations \eqref{eq:wh:0} becomes
\bq\label{eq:whw:0}
\begin{aligned}
L_hw_h(\sigma, x)&:=\lp D_t+\Gamma_h^w(\sigma, x, D_x)+V_h^w(\sigma, x, D_x)\rp w_h(\sigma, x)\\
&= G_h(\sigma, x)+R^1_h(\sigma, x)w_h(\sigma, x)+R^2_h(\sigma, x)w_h(\sigma, x).
\end{aligned}
\eq
\subsection{Classical Strichartz estimate for \eqref{eq:wh:0} ($\Leftrightarrow$ \eqref{eq:whw:0})}
In this step, we shall show that Theorem \ref{theo:KT} can be applied to the real symbol
\[
p_h:=\Gamma_h+V_h.
\]
Set $\ld=h^{-1}=2^j.$ First, the characteristic set of $\gamma$ has maximal ($d$) nonvanishing principal curvatures:
\begin{prop}
Let $\Cr$ be a fixed annulus in $\xR^d$.\\
1. There exists an absolute constant $C_d>0$ such that  with~$c_0=C_d(1+ \lA\nabla\eta \rA_{L^\infty(I\times \xR^d)})$  we have 
\bq\label{Hessgamma}
		      	\sup_{(t, x, \xi)\in I\times \xR^d\times \Cr}\la\det\partial_\xi^2\gamma(t,x,\xi)\ra\geq c_0.	
\eq
2. For any $0<\delta\le 1$ there exists $j_0\in \xN$ large enough such that 
\bq\label{HessGamma}
		      	\sup_{(t, x, \xi)\in I\times \xR^d\times \Cr}\la\det\partial_\xi^2S_{j\delta}(\gamma)(t,x,\xi)\ra\geq c_0.	
\eq
\end{prop}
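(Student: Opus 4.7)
The plan is to reduce part (1) to an explicit algebraic computation of $\partial_\xi^2 \gamma$ and then derive part (2) from part (1) by a regularization argument, exploiting the fact that the smoothing operator $S_{j\delta}$ acts in $x$ only and hence commutes with $\partial_\xi$. For the bulk of the work (part 1), I would write $\gamma = q^{3/4}$ where
\[
q(t,x,\xi) = A(\nabla\eta)\xi\cdot\xi, \qquad A(\zeta)\defn I - \frac{\zeta\zeta^{T}}{1+|\zeta|^{2}}.
\]
The matrix $A(\zeta)$ is symmetric positive definite with eigenvalues $1$ (multiplicity $d-1$) and $(1+|\zeta|^{2})^{-1}$ in the direction of $\zeta$, so $\det A(\zeta) = (1+|\zeta|^{2})^{-1}$.

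A direct differentiation gives the rank-one decomposition
\[
\partial_{\xi_{i}}\partial_{\xi_{j}}\gamma = \frac{3}{2}\, q^{-1/4} A_{ij} - \frac{3}{4}\, q^{-5/4} (A\xi)_{i}(A\xi)_{j}.
\]
Applying the matrix-determinant lemma $\det(M+uv^{T}) = \det(M)(1+v^{T}M^{-1}u)$ with $M = \tfrac{3}{2}q^{-1/4}A$, and using the key identity $A^{-1}(A\xi) = \xi$ together with $(A\xi)\cdot\xi = q$, the rank-one correction contributes exactly the scalar factor $1-\tfrac{1}{2}=\tfrac{1}{2}$, yielding the clean formula
\[
\det\partial_\xi^{2}\gamma(t,x,\xi) = \frac{1}{2}\left(\frac{3}{2}\right)^{d} \frac{q(t,x,\xi)^{-d/4}}{1+|\nabla\eta(t,x)|^{2}}.
\]
On the annulus $\Cr$ the frequency $|\xi|$ is bounded above, hence $q(t,x,\xi)\le|\xi|^{2}\lesssim 1$ and $q^{-d/4}\gtrsim 1$; combined with $(1+|\nabla\eta|^{2})^{-1}\ge (1+\|\nabla\eta\|_{L^\infty})^{-2}$, this furnishes the quantitative lower bound recorded in \eqref{Hessgamma}.

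For part (2), the observation is simply that $S_{j\delta}$ is a Fourier multiplier in $x$, so $\partial_{\xi}^{2}S_{j\delta}(\gamma) = S_{j\delta}(\partial_{\xi}^{2}\gamma)$. Since $\gamma(t,\cdot,\xi)$ belongs to $H^{s-1/2}(\xR^{d})$ uniformly in $(t,\xi)$ on any fixed compact set of $\xi$ (and $s-1/2 > d/2 + 2$ by hypothesis), the function $\partial_\xi^{2}\gamma$ is bounded and uniformly continuous in $x$, whence $S_{j\delta}(\partial_\xi^{2}\gamma) \to \partial_\xi^{2}\gamma$ uniformly on $I\times\xR^{d}\times\Cr$ as $j\to\infty$. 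Continuity of the determinant then preserves the lower bound for all $j\ge j_{0}$ large enough, possibly after absorbing a harmless factor into the constant $C_{d}$.

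No serious obstacle appears in either step: the only delicate point is the algebraic cancellation produced by the matrix-determinant lemma, which depends on the specific structure $\gamma = q^{3/4}$ with $q$ quadratic in $\xi$; the rest is routine bookkeeping of the dependence on $\|\nabla\eta\|_{L^\infty}$ and a standard mollification argument.
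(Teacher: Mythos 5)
Your computation of $\det\partial_\xi^2\gamma$ is correct. Writing $\gamma=q^{3/4}$ with $q=A(\nabla\eta)\xi\cdot\xi$ as you do, the Hessian is the rank-one perturbation $\tfrac{3}{2}q^{-1/4}A-\tfrac{3}{4}q^{-5/4}(A\xi)(A\xi)^{T}$, and the matrix-determinant lemma together with the cancellation $(A\xi)^{T}A^{-1}(A\xi)=q$ gives the closed formula
\[
\det\partial_\xi^{2}\gamma=\frac{1}{2}\Bigl(\frac{3}{2}\Bigr)^{d}\frac{q^{-d/4}}{1+|\nabla\eta|^{2}},
\]
which yields a strictly positive lower bound on $I\times\xR^{d}\times\Cr$. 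Part (2), by commuting $\partial_\xi^{2}$ with the Fourier multiplier $S_{j\delta}$ and passing to the uniform limit, is also the correct argument. The paper itself does not carry out this computation: it cites Corollary 4.7 and Proposition 4.5 of \cite{ABZ4}, which handle the analogous order-$\tfrac12$ symbol $\sqrt{a\lambda}$ for pure gravity waves, where the same Hessian calculation applies once one writes that symbol as $c(x)q^{1/4}$. So your proof is essentially a self-contained reconstruction of the argument the paper delegates.

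Two remarks. First, your bound gives $|\det\partial_\xi^{2}\gamma|\geq C(d,\Cr)\,(1+\lA\nabla\eta\rA_{L^\infty})^{-2}$, which does not literally match the displayed constant $c_0=C_d(1+\lA\nabla\eta\rA_{L^\infty})$; a lower bound that \emph{grows} with $\lA\nabla\eta\rA_{L^\infty}$ cannot hold, so the exponent in the paper should be read as negative (a typo, consistent with the fact that the $\sup$ in the displayed inequality should be an $\inf$, since the estimate is used to verify the pointwise condition $(\mathbf{A})$ of Koch--Tataru in Lemma \ref{check:a}). Second, in part (2) the assertion that $\gamma(t,\cdot,\xi)\in H^{s-1/2}$ is imprecise: $\gamma$ tends to $|\xi|^{3/2}$ as $|x|\to\infty$ and therefore does not decay. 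What is actually used is that $\gamma(t,\cdot,\xi)\in W^{\rho,\infty}(\xR^{d})$ for some $\rho>0$ uniformly for $(t,\xi)\in I\times\Cr$, whence $\lA\partial_\xi^{2}S_{j\delta}(\gamma)-\partial_\xi^{2}\gamma\rA_{L^\infty}\les 2^{-j\delta\rho}\to 0$, which is exactly the perturbation estimate the paper points to. Both are minor; the substance of your proof is right.
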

\begin{proof}
1. For the proof of part 1, we refer to  Corollary $4.7$, \cite{ABZ4}. Part 2. is a consequence of part 1. because $S_{j\delta}(\gamma)$ is a small perturbation of $\gamma$ when $j$ is large enough (see for instance Proposition $4.5$, \cite{ABZ4}).
\end{proof}
\begin{lemm}\label{check:a}
1. We have $\Gamma_h\in \ld S^2_\ld,~V_h\in \ld^{\frac{3}{4}}S^2_\ld$ and hence $p_h\in \ld S^2_\ld$.\\
2.  There exists $h_0>0$ small enough such that for $0<h\le h_0$, the symbol $p_h$ satisfies condition ({\bf A}) in Theorem \ref{theo:KT} with  $\Cr_\ld=\ld\Cr^2$.
\end{lemm}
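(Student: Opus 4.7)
The plan is to verify parts 1 and 2 in turn, using three basic ingredients: the $3/2$-homogeneity of $\gamma$ in $\xi$, the Bernstein-type inequality from Lemma~\ref{Bernstein} applied to the spatial smoothing $S_{(j-3)/2}$, and the fact that on the target annulus $\mathcal{C}_\lambda=\lambda\mathcal{C}^2$ the cutoff $\tph(h\xi)$ is identically $1$ together with all its $\xi$-derivatives.

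For part 1, I would check the bounds \eqref{symbolKT} multi-index by multi-index. The $\xi$-behavior is elementary: each $\xi$-derivative of $\gamma$, $\omega$, $V\cdot\xi$ or of $\tph(h\xi)$ costs a factor of $\lambda^{-1}$ on the support of $\tph(h\xi)$. For $x$-derivatives of order $|\alpha|$ below the H\"older exponent of the underlying paradifferential symbol I use the direct $L^\infty$ bound; for larger $|\alpha|$ I appeal to Lemma~\ref{Bernstein} with $\delta=\mez$, so that each additional derivative past the H\"older threshold costs $2^{j/4}=\lambda^{1/4}$. Since $s>\tfrac{5}{2}+\tfrac{d}{2}$ implies that $\gamma$ has $x$-regularity of order $s-\mez-\tfrac{d}{2}>2$, a routine bookkeeping using the $h^\mez=\lambda^{-\mez}$ prefactor gives $\Gamma_h\in\lambda S^2_\lambda$, and $\Omega_h$ being one order lower causes no trouble. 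For $V_h$ the decisive case is $|\alpha|=2$: because $V\in H^{s-1}\hookrightarrow W^{s-1-d/2,\infty}\subset W^{\tdm+\eps,\infty}$ does not have two derivatives in $L^\infty$, Bernstein forces the factor $2^{j/4}=\lambda^{1/4}$, which combined with $h^\mez$ and the single power of $|\xi|\sim\lambda$ coming from $V\cdot\xi$ lands exactly on $\lambda^{3/4}$. Hence $V_h\in\lambda^{3/4}S^2_\lambda$, and because $\lambda^{3/4}\le\lambda$, summing gives $p_h\in\lambda S^2_\lambda$.

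For part 2, the key simplification is that on $\mathcal{C}_\lambda$ the cutoff $\tph(h\xi)$ is identically $1$ in an open neighbourhood of every point, so all of its $\xi$-derivatives vanish there. Since $V_h$ is affine in $\xi$ modulo $\tph(h\xi)$, one has $\partial_\xi^2 V_h\equiv 0$ on $\mathcal{C}_\lambda$, hence on that annulus
\bq
\partial_\xi^2 p_h \;=\; h^\mez\,\partial_\xi^2 S_{(j-3)/2}(\gamma).
\eq
Because $S_{(j-3)/2}$ acts only in $x$, the $3/2$-homogeneity of $\gamma$ in $\xi$ transfers to $S_{(j-3)/2}(\gamma)$, and $\det\partial_\xi^2 S_{(j-3)/2}(\gamma)$ is homogeneous of degree $-d/2$ in $\xi$. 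The lower bound \eqref{HessGamma} on the fixed annulus $\mathcal{C}^2$ (read in its inf form and valid for $j\ge j_0$) then rescales by homogeneity to $\la\det\partial_\xi^2 S_{(j-3)/2}(\gamma)\ra\gtrsim\lambda^{-d/2}$ on $\mathcal{C}_\lambda$. Multiplying by $(h^\mez)^d=h^{d/2}$ yields $\la\det\partial_\xi^2 p_h\ra\gtrsim h^{d/2}\lambda^{-d/2}=\lambda^{-d}$, which is condition $(\mathbf{A})$.

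The main obstacle will be the tight accounting in the $|\alpha|=2$ case for $V_h$, where every $\lambda^{1/4}$ coming from Bernstein must be tracked to land exactly on the index $3/4$; on the curvature side, the delicate point is that the spatial smoothing $S_{(j-3)/2}$ must not destroy the homogeneity-based lower bound, and this is precisely where the threshold $j\ge j_0$ (equivalently $h\le h_0$) in \eqref{HessGamma} enters.
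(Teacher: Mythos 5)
Your proof is correct in substance and follows the same approach as the paper's: Bernstein's inequality (Lemma~\ref{Bernstein}) to check the $\lambda^m S^2_\lambda$ bounds in part~1 using the $W^{2,\infty}$/$W^{3/2,\infty}$ regularity of $\gamma$ and $V$ together with $|\xi|\sim\lambda$ on $\supp\tph(h\xi)$, and the $\tfrac{3}{2}$-homogeneity of $\gamma$ in $\xi$ combined with \eqref{HessGamma} for the Hessian lower bound in part~2 (your homogeneity-degree bookkeeping and the paper's substitution $\xi\mapsto h\xi$ are the same computation). One slip in the general heuristic of part~1: with the smoothing parameter $\delta=\mez$, each additional \emph{full} $x$-derivative past the H\"older threshold $\rho$ costs $2^{j\delta}=\lambda^{1/2}$, not $\lambda^{1/4}$; the factor $\lambda^{1/4}$ you invoke comes specifically from the half-derivative excess $|\alpha|-\rho=2-\tdm=\mez$ in the $V_h$, $|\alpha|=2$ case, where your computation does land correctly on $\lambda^{3/4}$, so the conclusion $\Gamma_h\in\lambda S^2_\lambda$, $V_h\in\lambda^{3/4}S^2_\lambda$ is unaffected.
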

\begin{proof}
1. Since $\gamma\in W^{2,\infty}$ and $V\in W^{\tdm,\infty}$ in $x$, assertion 1. then follows easily from Lemma \ref{Bernstein} and the fact that on the support of $\tph(h\xi)$ we have $|\xi|\sim \ld$.\\
2. 
Let $\xi \in\ld\Cr^2$. We have $\tph(\th\xi)= 1$,  hence $\partial_\xi^2V_h$ vanishes and since $\gamma$ is homogeneous of order $\tdm$ in $\xi$,
\[
\partial_\xi^2\Gamma_h(\sigma, x, \xi)= \partial_\xi^2\lp  h^\mez S_{{(j-3)}\mez}(\gamma)(h^\mez \sigma, x, \xi)\rp=h \lp\partial_\xi^2S_{{(j-3)}\mez}(\gamma)\rp(h^\mez \sigma, x, h\xi).
\]
Therefore, condition $({\bf A})$ is verified by virtue of \eqref{HessGamma}.
\end{proof}
Calling $S_h(\sigma, \sigma_0)$ the flow map of the evolution operator $L_h=D_\sigma+\Op^w(p_h)$ (see \eqref{eq:whw:0}), we have:
\begin{lemm}\label{dis:w}
If $v_h^0$ is spectrally supported in  $\ld\mathcal{C}^1=\lB (2c_1)^{-1}h^{-1}\leq\la\xi\ra\leq 2c_1h^{-1}\rB$ then\\
$(i)$ \[
		\lA S_h(\sigma,\sigma_0)v^0_h\rA_{L^\infty(\xR^d)}\les  h^{-\frac{d}{2}}
		  \la\sigma-\sigma_0\ra^{-\frac{d}{2}}\lA v_h^0\rA_{L^1(\xR^d)}
\]
	for all~$\sigma,~\sigma_0\in [0, 1]$ and~$0<h\leq h_0$;\\
$(ii)$ with $q>2$ and~$\frac{2}{q}=\frac{d}{2}-\frac{d}{r}$, 
	\begin{equation*}
		\lA S_h(\cdot, 0)v^0_h\rA_{L^q([0, 1],L^r)}\les  h^\frac{-1}{q}\lA v_h^0\rA_{L^2}.
	\end{equation*}
\end{lemm}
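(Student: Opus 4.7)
The plan is to derive part $(i)$ as a direct consequence of the Koch-Tataru dispersive estimate (Proposition \ref{theo:KT}), and then obtain part $(ii)$ by a classical $TT^*$ argument combined with Hardy-Littlewood-Sobolev fractional integration in time.

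For $(i)$, choose a symbol $\chi(\xi)\in S^0_\ld$ that equals one on the annulus $\ld\Cr^1$ and is supported in a slightly enlarged annulus contained in $\ld\Cr^2$. Since $v_h^0$ is by hypothesis spectrally supported in $\ld\Cr^1$, we have $\chi(D_x)v_h^0=v_h^0$. By Lemma \ref{check:a}, $p_h=\Gamma_h+V_h$ is a real symbol in $\ld S^2_\ld$ satisfying the curvature condition $({\bf A})$ on $\ld\Cr^2$; consequently all hypotheses of Proposition \ref{theo:KT} are met (with $\Cr_\ld=\ld\Cr^2$ and $\Cr'_\ld$ the support of $\chi$), and its conclusion yields
\[
\lA S_h(\sigma,\sigma_0)v_h^0\rA_{L^\infty}=\lA S_h(\sigma,\sigma_0)\chi(D_x)v_h^0\rA_{L^\infty}\les \ld^{d/2}\la\sigma-\sigma_0\ra^{-d/2}\lA v_h^0\rA_{L^1},
\]
which is $(i)$ after substituting $\ld=h^{-1}$.

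For $(ii)$ I need in addition the uniform $L^2$ energy bound $\lA S_h(\sigma,\sigma_0)\rA_{L^2\to L^2}\les 1$. This follows from the fact that $p_h$ is real-valued combined with the Weyl calculus identity $(\Op^w(p_h))^*=\Op^w(\overline{p_h})=\Op^w(p_h)$: differentiating $\lA S_hv\rA_{L^2}^2$ in $\sigma$ produces $2\IM\ls\Op^w(p_h)S_hv,S_hv\rs$, which vanishes identically. Riesz-Thorin interpolation between this isometry and $(i)$ gives, for spectrally localized data and $r\in[2,\infty]$,
\[
\lA S_h(\sigma,\sigma_0)v_h^0\rA_{L^r}\les h^{-(d/2-d/r)}\la\sigma-\sigma_0\ra^{-(d/2-d/r)}\lA v_h^0\rA_{L^{r'}}.
\]
Setting $\alpha:=d/2-d/r=2/q$ by the admissibility relation, the operator $TT^* f(\sigma)=\int_0^1 S_h(\sigma,\sigma')f(\sigma')\,d\sigma'$ is pointwise in $\sigma$ controlled (after taking the $L^r$ norm in $x$) by $h^{-\alpha}\int_0^1\la\sigma-\sigma'\ra^{-\alpha}\lA f(\sigma')\rA_{L^{r'}}\,d\sigma'$. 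The Hardy-Littlewood-Sobolev inequality on $\xR$ maps $L^{q'}\to L^q$ precisely when $\alpha=2/q$, hence $\lA TT^*\rA_{L^{q'}L^{r'}\to L^qL^r}\les h^{-2/q}$, and the $TT^*$ lemma yields $\lA S_h(\cdot,0)\rA_{L^2\to L^qL^r}\les h^{-1/q}$, which is $(ii)$. The hypothesis $q>2$ is exactly what keeps HLS in its non-endpoint range.

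The only slightly delicate point, rather than a genuine obstacle, is justifying that $\Op^w(p_h)$ actually generates a one-parameter unitary group on $L^2$, i.e. that the formal self-adjointness yielded by Weyl calculus upgrades to essential self-adjointness. This is standard here because, after the $S_{(j-3)/2}$ regularization and the cutoff $\tph(h\xi)$, the symbol $p_h$ is smooth with uniform-in-$h$ bounds placing it in the H\"ormander class $S^1_{1,1/2}$ on the relevant frequency shell; Calder\'on-Vaillancourt then gives $L^2$-boundedness of $\Op^w(p_h)$ on $H^1$, so the flow $S_h$ is well-defined, unitary on $L^2$, and preserves (up to a harmless enlargement of the shell) the spectral localization that $(i)$ requires.
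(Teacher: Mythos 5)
Your proof follows the paper's approach exactly: part $(i)$ is a direct application of Proposition \ref{theo:KT} via Lemma \ref{check:a} and an appropriate frequency cutoff $\chi$ equal to one on $\ld\Cr^1$ and supported strictly inside $\ld\Cr^2$, and part $(ii)$ combines the $L^2$ unitarity of $S_h$ (from self-adjointness of $\Op^w(p_h)$ for the real symbol $p_h$) with a $TT^*$/Hardy--Littlewood--Sobolev argument. The only differences are cosmetic: you unroll the $TT^*$ argument that the paper delegates to \cite[Theorem 10.7]{Zworski}, and you append a remark on well-definedness of the flow that is correct but more circuitous than needed, since the cutoff $\tph(h\xi)$ already makes $\Op^w(p_h)$ a bounded (self-adjoint) operator on $L^2$ for each fixed $h$.
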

\begin{proof}
By Lemma \ref{check:a}, $(i)$ is a direct consequence of Proposition \ref{theo:KT} if one chooses 
\[
\chi(\xi)\in C^\infty,~ \supp\chi\subset \{(2c_{1,2})^{-1}\ld\le |\xi|\le 2c_{1,2}\ld\},~c_1<c_{1,2}<c_2,~\chi\equiv 1 ~\text{in}~\Cr^1.
\]
 For $(ii)$ we remark that since $\Op^w(\Gamma_h)$ and $\Op^w(V_h)$ are self-adjoint, $S_h(\sigma, \sigma_0)$ is isometric in $L^2$. This combining with the dispersive estimate $(i)$ and a standard $TT^*$~argument (see the abstract semi-classical Strichartz estimate in Theorem $10.7$, \cite{Zworski}) yields $(ii)$.
\end{proof}
\begin{lemm}\label{energy:0}
For any $\mu \in \xR$, the operators $S_h(\sigma, \tau)$ are bounded on $H^\mu(\xR^d)$ uniformly in $\tau, \sigma\in I$ .
\end{lemm}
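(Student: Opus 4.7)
My plan is to reduce the $H^\mu$ bound to the $L^2$-isometric property of $S_h$ via a Gronwall argument. Since $p_h = \Gamma_h + V_h$ is real, $\Op^w(p_h)$ is formally self-adjoint, so $S_h(\sigma,\tau)$ is unitary on $L^2$; this observation was already invoked in the proof of Lemma~\ref{dis:w}$(ii)$. For general $\mu \in \xR$, I would conjugate by $\Lambda^\mu := \langle D_x\rangle^\mu$: setting $w(\sigma) = S_h(\sigma,\tau)w_0$ and $\tilde w = \Lambda^\mu w$, a direct computation shows that $\tilde w$ satisfies
\[
(D_\sigma + \Op^w(p_h))\tilde w = R_\mu \tilde w, \qquad R_\mu := [\Op^w(p_h),\Lambda^\mu]\Lambda^{-\mu}.
\]

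Granting that $R_\mu$ is bounded on $L^2$ with some constant $C_h$, the Duhamel formula together with $\|S_h(\sigma,s)\|_{L^2\to L^2}=1$ will yield
\[
\|\tilde w(\sigma)\|_{L^2} \le \|\tilde w(\tau)\|_{L^2} + C_h\int_\tau^\sigma \|\tilde w(s)\|_{L^2}\,ds,
\]
and Gronwall's lemma then closes the estimate with a constant $e^{C_h|I|}$ uniform in $\sigma,\tau\in I$. Since $\|\tilde w(\sigma)\|_{L^2} = \|S_h(\sigma,\tau)w_0\|_{H^\mu}$, this is precisely the desired bound.

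To verify that $R_\mu$ is $L^2$-bounded I would use the Weyl symbolic calculus from Proposition~\ref{expand} together with Lemmas~\ref{toW:improve}--\ref{adjoint:improve}. The principal symbol of $R_\mu$ is, up to a factor of $i$, $\partial_\xi\langle\xi\rangle^\mu \cdot \partial_x p_h \cdot \langle\xi\rangle^{-\mu}$ (the second Poisson bracket term vanishes because $\langle\xi\rangle^\mu$ is independent of $x$). By Lemma~\ref{check:a}, $p_h \in \ld S^2_\ld$ is supported where $|\xi|\sim\ld=h^{-1}$; on this support the three factors have sizes $\ld^{\mu-1}$, $\ld$, and $\ld^{-\mu}$, giving a uniformly bounded product. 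Higher derivatives of the symbol produce additional powers of $\ld$, but these are finite for each fixed $h$, so Calder\'on--Vaillancourt delivers the $L^2$-boundedness with a constant depending on $h$.

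The main technical point will be that $\ld^m S^k_\ld$ is not a standard H\"ormander class: the $x$-regularity degrades beyond the first $k$ derivatives, and this is the reason the bound on $R_\mu$ is only $h$-dependent rather than uniform. For the present lemma this is harmless since only uniformity in $\sigma,\tau$ is required. The case $\mu<0$ can be treated either by the same argument, noting that $\Lambda^\mu$ remains invertible, or alternatively by duality using $S_h(\sigma,\tau)^* = S_h(\tau,\sigma)$ on $L^2$ together with the $\mu>0$ bound.
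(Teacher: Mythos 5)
Your overall strategy --- conjugate by $\Lambda^\mu$, write a Duhamel formula and close with Gronwall, using the $L^2$-unitarity of $S_h$ --- is in substance the same as the paper's, which differentiates $\|\Lambda^\mu f\|_{L^2}^2$ in $\sigma$ and closes with Gronwall. The whole difficulty is the symbol analysis for the error $R_\mu$, and there your argument has a genuine gap.

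You assert that the bound on $R_\mu$ is ``only $h$-dependent rather than uniform,'' and that this is ``harmless since only uniformity in $\sigma,\tau$ is required.'' Both halves are wrong. On harmlessness: the lemma is invoked inside the proof of Proposition~\ref{Str:w}, in estimate~\eqref{est:R1}, where the only $h$-dependence must come from the explicit factor $h^{-1+\frac{N}{2}-\frac{d}{2}}$; any hidden $h$-dependence in the $H^\mu$ operator norm of $S_h(\sigma,\tau)$ would destroy the Strichartz estimate. On the size of the loss: the naive calculus you appeal to is catastrophic, not merely non-uniform. Since $p_h\in\lambda S^2_\lambda$ and $\lambda^{-1}|\xi|\sim 1$ on its support, $p_h$ only lies in $S^1_{1,\mez}$ a priori, so the general $S^m_{\rho,\delta}$ calculus places $[\Lambda^\mu,\Op^w(p_h)]$ in $S^{\mu+\mez}_{1,\mez}$ and hence $R_\mu\in S^{\mez}_{1,\mez}$; this gives $\|R_\mu\|_{L^2\to L^2}\lesssim h^{-\mez}$ and a Gronwall factor $e^{Ch^{-\mez}|I|}$, which blows up exponentially as $h\to 0$.

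The point of the subsection ``Remarks on the symbolic calculus for $\lambda^m S^k_\lambda$'' is precisely that this loss does not occur. Because $p_h\in\lambda S^2_\lambda$ with $k=2\ge 1$ and $\lambda^{-1}|\xi|\sim 1$ on its support, Lemmas~\ref{toW:improve}, \ref{compose:improve} and \ref{adjoint:improve} give a full order gain rather than the naive half order: the Weyl-to-standard remainder lies in $S^0_{1,\mez,\mez}$, the commutator $[\Lambda^\mu,\Op(p_h)]$ lies in $S^\mu_{1,\mez}$, and $\Op(p_h)-\Op^*(p_h)$ lies in $S^0_{1,\mez}$ --- all with seminorms controlled by those of $p_h$ in $\lambda S^2_\lambda$, hence uniformly in $h$. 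These are genuine Calder\'on--Vaillancourt classes, so the resulting $L^2$ bounds are $h$-uniform and Gronwall closes with an $h$-independent constant. Your proof would become correct if you replaced the last paragraph by this refined symbol analysis (converting to the standard quantization first, as the paper does, since Lemmas~\ref{compose:improve} and \ref{adjoint:improve} are stated for $\Op$ rather than $\Op^w$); as written, it proves a quantitatively much weaker statement that the rest of the paper cannot use.
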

\begin{proof}
This result bases on a standard energy estimate. However, the proof requires more care  since we are not working on standard operators of classes $S^m_{1,0}$. Without loss of generality we assume $\tau=0$ and let $f(\sigma, x)$ be a solution of 
\[
\lp \partial_\sigma+i\Op^w(p_h)\rp f(\sigma, x)=0,\quad f(0, x)=f_0(x).
\]
We first apply Lemma  \ref{toW:improve} to obtain
\[
\Op^w(p_h)=\Op(p_h)+\Op(r_h),\quad r_h\in S^{0}_{1, \mez,\mez}.
\]
Then $f$ solves the problem
\[
\lp \partial_\sigma+i\Op(p_h)+i\Op(r_h)\rp f(\sigma, x) =0, \quad f(0, x)=f_0(x).
\]
Let $\mu \in \xR$ and set $f^\mu:=\langle D_x\rangle ^\mu f$. Then
\[
\frac{d}{d\sigma} \lA f^\mu\rA_{L^2}^2 =-i\langle \lp \Op(p_h)-\Op^*(p_h)\rp f^\mu, f^\mu\rangle +2\Re\langle F, f^\mu\rangle 
\]
where 
\[
F:=-i\lb  \langle D_x, \Op(p_h)\rangle^\mu \rb f-i\langle D_x\rangle^\mu \Op(r_h) f.
\]
According to Lemma \ref{compose:improve}, $\lb \langle D_x\rangle^\mu, \Op(p_h) \rb \in S^\mu_{1, \mez}$. This combining with the fact that $r_h\in S^{0}_{1, \mez,\mez}$ gives
\[
\lA F\rA_{L^2}\les \lA f\rA_{H^\mu}.
\]
 On the other hand,  since  $p_h$ is real, Lemma \ref{adjoint:improve} implies 
\[
\Op(p_h)-\Op^*(p_h)\in S^0_{1, \mez}.
\]
Consequently, we have
\[
\lA \lp\Op(p^0_h)-\Op^*(p^0_h)\rp f^\mu\rA_{L^2}\les \lA f^\mu\rA_{L^2}.
\]
Finally, we conclude by Gronwall's inequality that
\[
\lA f(\sigma)\rA_{H^\mu}\les \lA f_0\rA_{H^\mu}\quad \forall \sigma\in I.
\]
\end{proof}
\begin{prop}\label{Str:w}
If $w_h$ is a solution to \eqref{eq:wh:0} with $w_h(0)=w^0_h$ and
\[
 \supp\widehat {w_h},~\supp\widehat {w^0_h}\subset \ld\Cr^1
\]
 then we have for all $\eps>0$
\[
		\lA w_h\rA_{L^{2+\eps}([0, 1], W^{s-\frac{d}{2}+\mez-\eps,\infty})}\les_\eps\lA w^0_h\rA_{H^s}+\lA G_h\rA_{L^1([0, 1], H^s)}+h^\mez\lA w_h\rA_{L^1([0, 1], H^s)}.
\]
\end{prop}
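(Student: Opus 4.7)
The plan is to derive the estimate from the semi-classical Strichartz bound of Lemma \ref{dis:w}(ii), which is an $L^qL^r$ estimate, by invoking Duhamel's formula and then trading $L^r$ for $L^\infty$ via Bernstein's inequality, exploiting the fact that $w_h$ is spectrally localized at frequency scale $\lambda=h^{-1}$.

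First, I would write \eqref{eq:whw:0} in integral form,
\[
w_h(\sigma)=S_h(\sigma,0)w_h^0+i\int_0^\sigma S_h(\sigma,\sigma')\bigl[G_h+R_h^1 w_h+R_h^2 w_h\bigr](\sigma')\,d\sigma',
\]
and insert a semi-classical cut-off $\chi(hD_x)$ (with $\chi\equiv 1$ on $\Cr^1$, supported in a slightly enlarged annulus) in front of each source, to make everything essentially spectrally localized in $\lambda\Cr^1$; the tail contributions $(1-\chi(hD_x))(\cdot)$ can be absorbed using the $L^2$ energy estimate of Lemma \ref{energy:0}. Applying Lemma \ref{dis:w}(ii) together with the Christ--Kiselev lemma for the Duhamel integral then gives, for $q=2+\eps$ paired with $r$ via $\tfrac{2}{q}=\tfrac{d}{2}-\tfrac{d}{r}$,
\[
\|w_h\|_{L^q_\sigma L^r_x}\lesssim h^{-1/q}\Bigl[\|w_h^0\|_{L^2}+\|G_h\|_{L^1_\sigma L^2_x}+\sum_{i=1,2}\|R_h^i w_h\|_{L^1_\sigma L^2_x}\Bigr].
\]

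To convert this $L^qL^r$ estimate into the target $L^q W^{\alpha,\infty}$ bound, I would use Bernstein's inequality in the forms $\|v\|_{W^{\alpha,\infty}}\lesssim \lambda^{\alpha+d/r}\|v\|_{L^r}$ and $\|v\|_{L^2}\lesssim \lambda^{-s}\|v\|_{H^s}$, both valid for $v$ spectrally localized at scale $\lambda$. Setting $\alpha=s-d/r-1/q$, which simplifies to $\alpha=s-\tfrac{d}{2}+\tfrac{1}{2+\eps}$, makes the overall power of $\lambda$ collapse to $\lambda^s$, exactly turning each $L^2$ norm on the right into an $H^s$ norm. Choosing $\eps$ small enough makes $\alpha$ as close to $s-\tfrac{d}{2}+\mez$ as desired, and after a harmless relabelling this recovers the Hölder index $s-\tfrac{d}{2}+\mez-\eps$ of the statement.

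The main obstacle is the control of the quantization remainders $R_h^1,R_h^2$ from \eqref{R1R2}, where I need the bound $\|R_h^i w_h\|_{L^2}\lesssim h^{\mez}\|w_h\|_{L^2}$ so that, after the Bernstein exchange $\lambda^s\|w_h\|_{L^2}\sim\|w_h\|_{H^s}$, the sum on the right of the preceding display reproduces exactly the $h^{\mez}\|w_h\|_{L^1 H^s}$ term appearing in the statement. For $R_h^2=\Op(V_h)-\Op^w(V_h)$ this bound follows from Lemma \ref{toW:improve} applied to the Koch--Tataru symbol $V_h$: the refined quantization comparison there gains a full $\lambda$-order over the naive $S^m_{1,\mez}$ calculus, which under the semi-classical scaling $|\xi|\sim\lambda=h^{-1}$ translates into the factor $h^{\mez}$. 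For $R_h^1=\Op(\Gamma_h+\Omega_h)-\Op^w(\Gamma_h)$ one additionally has to exploit the built-in cancellation $\Omega_h+\tfrac{i}{2}(\partial_\xi\cdot\partial_x)\Gamma_h=O(h^{\mez})$, which is encoded in the very definition $\omega=-\tfrac{i}{2}(\partial_\xi\cdot\partial_x)\gamma$: the leading term of the Weyl expansion of $\Gamma_h$ cancels $\Omega_h$, leaving only a second-order remainder of size $h^{\mez}$. Combining these estimates with the homogeneous and $G_h$ contributions then produces the three terms on the right-hand side and completes the proof.
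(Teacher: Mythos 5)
Your overall strategy matches the paper's: Duhamel's formula, the dispersive/Strichartz estimate from Lemma \ref{dis:w}(ii), and a Bernstein-type exchange to turn $L^r$ into $W^{\cdot,\infty}$ and $L^2$ into $H^s$ using the frequency localization. (The invocation of Christ--Kiselev is unnecessary but harmless: since the source is placed in $L^1_\sigma L^2_x$, Minkowski's inequality already suffices.) The genuine gap is in your treatment of the quantization remainder $R^2_h=\Op(V_h)-\Op^w(V_h)$, which is where the factor $h^{\mez}$ of the statement actually comes from.

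You claim the bound $\|R_h^2 w_h\|_{L^2}\lesssim h^{\mez}\|w_h\|_{L^2}$ follows from Lemma \ref{toW:improve} applied to $V_h$. But Lemma \ref{check:a} only places $V_h$ in $\ld^{3/4}S^2_\ld$, so Lemma \ref{toW:improve} gives the remainder in $S^{m-1}_{1,\mez,\mez}$ with $m=\frac{3}{4}$, i.e.\ order $-\frac{1}{4}$, hence the $L^2$ operator norm bound $h^{\frac{1}{4}}$ — not $h^{\mez}$. You seem to conflate the relative improvement over the naive $S^{3/4}_{1,\mez}$ calculus (which is indeed a factor $\ld^{-\mez}=h^{\mez}$) with the absolute bound on the remainder. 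The paper's proof obtains the full $h^{\mez}$ by a different route: it uses the term-by-term asymptotic Weyl expansion from Proposition \ref{expand} and shows, via Lemma \ref{Bernstein}, that each coefficient $c^\alpha_h=\frac{(-i)^{|\alpha|}}{\alpha!2^{|\alpha|}}\partial_x^\alpha\partial_\xi^\alpha V_h$ lies in $S^{-|\alpha|/2}_{1,\mez}$ for $|\alpha|\geq 1$. The $h^{\mez}$ gain for $|\alpha|=1$ depends crucially on the specific structure of $V_h=h^{\mez}S_{(j-3)\mez}(V)\cdot\xi\,\tph(h\xi)$: one $\xi$-derivative kills the linear-in-$\xi$ factor, one $x$-derivative is harmless because $V\in W^{1,\infty}$, and the overall prefactor $h^{\mez}$ remains. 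This finer information is not visible in the Koch--Tataru class $\ld^{3/4}S^2_\ld$, because that class puts all $|\alpha|\leq 2$ on the same footing. A similar remark applies to your treatment of $R^1_h$: after the cancellation with $\Omega_h$, the paper's expansion starts at $|\alpha|=2$ giving $b^\alpha_h\in S^{-1}_{1,\mez}$, i.e.\ a gain $h^1$, not $h^{\mez}$. In addition, you do not address the tail $\Op(r_h^N)$ of the finite Weyl expansion, which the paper controls by taking $N$ large, using the Sobolev embedding $H^{d/2}\hookrightarrow L^r$ and the $H^\mu$-boundedness of $S_h(\sigma,\tau)$ from Lemma \ref{energy:0}. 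Without the expansion-plus-Bernstein argument, the bound claimed in the Proposition is not reached.
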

\begin{proof}
To simplify notations, we write $S_h(\sigma, \tau)=S(\sigma, \tau)$. If $w_h$ is a solution to \eqref{eq:wh:0}, it is also a solution to \eqref{eq:whw:0}. By Duhamel's formula, there holds
\[
w_h(\sigma, 0)=S_h(\sigma, 0)w_h^0+\int_0^\sigma S(\sigma, \tau)[G_h(\tau)]d\tau+\int_0^\sigma S(\sigma, \tau)[(R^1_hw_h+R^2_hw_h)(\tau)]d\tau.
\]
Let us call $(I)$ and $(II)$, respectively, the first and the second integral on the right-hand side. Choosing $c_1$ large enough such that $\supp\widehat G_h\subset \ld\Cr^1$, Lemma \ref{dis:w} $(ii)$ gives 
\[
\lA (I)\rA_{L^q([0, 1], L^r)}\les h^{-\frac{1}{q}}\lA G_h\rA_{L^1([0, 1]; L^2)}
\]
For $(II)$ we set 
\[
b^\alpha_h=\frac{(-i)^{|\alpha|}}{\alpha!2^{|\alpha|}}\partial_x^\alpha\partial_\xi^\alpha \Gamma_h (\sigma, x, \xi)~ |\alpha|\ge 2; \quad c^\alpha_h= \quad \frac{(-i)^{|\alpha|}}{\alpha!2^{|\alpha|}}\partial_x^\alpha\partial_\xi^\alpha V_h(\sigma, x, \xi)~|\alpha|\ge 1.
\]
For each $|\alpha|\ge 2$, since  $\gamma$ is $W^{2,\infty}$ in $x$ we have by applying Lemma \ref{Bernstein}
\[
\lA \partial^\mu_x\partial_\xi^\nu (\partial_x^\alpha\partial_\xi^\alpha \Gamma_h)\rA \les  (1+|\xi|)^{1+\frac{|\mu|+|\alpha|-2}{2}-(|\nu|+|\alpha|)}\les (1+|\xi|)^{-\frac{|\alpha|}{2}+\frac{|\mu|}{2}-|\nu|},
\] 
hence $b^\alpha_h\in S^{-\frac{|\alpha|}{2}}_{1,\mez}$. Similarly, as $V\in W^{1, \infty}$ it holds that $c^\alpha_h\in S^{-\frac{|\alpha|}{2}}_{1,\mez}$ for $|\alpha|\ge 1$ . 
Taking $q>2$ and $\frac{2}{q}=\frac{d}{2}-\frac{d}{r}$, we claim that uniformly in $\tau\in [0, 1]$
\bq\label{claim:R}
\lA S(\sigma, \tau)[(R^1_hw_h)(\tau)]\rA_{L^q_\sigma L^r_x}\les h^{-\frac{1}{q}+1}\lA w_h(\tau)\rA_{L^2_x}.
\eq
Indeed, by the asymptotic expansion in  Proposition \ref{expand},
\[
R_h^1=\sum_{|\alpha|=2}^{N-1}\Op(b_h^\alpha)+\Op(r^N_h),~r_h^N\in S^{1-\frac{N}{2}}_{1,\mez}\quad \forall N\ge 3.
\]
If we choose $c_1$ large enough then each $\Op(b^\alpha_hw_h)(\tau)$  is spectrally supported in $\ld\mathcal{C}^1$ (and so is $w_h$) so that Lemma \ref{dis:w} $(ii)$ can be applied  to get
\bq\label{est:R}
\| S(\sigma, \tau)[\Op(b^\alpha_h)w_h(\tau)]\|_{L^q_\sigma L^r_x}\le h^{-\frac{1}{q}}\lA \Op(b^\alpha_h)w_h\rA_{L^2_x}\les h^{-\frac{1}{q}+\frac{|\alpha|}{2}}\lA w_h(\tau)\rA_{L^2_x}.
\eq
For $\Op(r^N_h)$ one uses the Sobolev embedding $H^{\frac{d}{2}}\hookrightarrow L^r,~\forall r\in [2, +\infty)$ to estimate 
\[
\| S(\sigma, \tau)[\Op(r^N_h)w_h(\tau)]\|_{L^r_x}\les \| S(\sigma, \tau)[\Op(r^N_h)w_h(\tau)]\|_{H^{\frac{d}{2}}_x}.
\]
On the other hand, we know from Lemma \ref{energy:0}  that $S(\sigma, \tau)$ is bounded from $H^\mu$ to $H^\mu$ uniformly in $\sigma,~\tau\in [0, 1]$ for all $\mu \in \xR$. Hence
\bq\label{est:R1}
\| S(\sigma, \tau)[\Op(r^N_h)w_h(\tau)]\|_{L^r_x}\les h^{-1+\frac{N}{2}-\frac{d}{2}}\lA w_h(\tau)\rA_{L^2_x}.
\eq
Choosing $N=N(d)$ large enough, we conclude the claim \eqref{claim:R} from \eqref{est:R} and \eqref{est:R1}.\\
 In the same way, we obtain the following estimate for $R^2_h$ (which is also uniformly in $\tau\in [0, 1]$)
\[
\lA S(\sigma, \tau)[(R^2_hw_h)(\tau)]\rA_{L^q_\sigma L^r_x}\les h^{-\frac{1}{q}+\mez}\lA w_h(\tau)\rA_{L^2_x}.
\]
Putting together the above estimates  leads to
\[
\lA w_h\rA_{L^qL^r}\le h^{-\frac{1}{q}}\lp \lA w^0_h\rA_{L^2}+\lA G_h\rA_{L^1([0, 1]; L^2)}+h^\mez\lA w_h\rA_{L^1L^2}\rp.
\]
Taking~$q=2+\eps$ then ~$h^\frac{-1}{q}\leq h^\frac{-1}{2}$. We multiply both sides by~$h^{-s}$ and use the frequency localization of~$w_h,~w_h^0$ to get
	\begin{equation*}
		\lA w_h\rA_{L^{2+\eps}W^{s-\frac{1}{2},r}}\les_\eps \lA w^0_h\rA_{H^s}+\lA G_h\rA_{L^1 H^s}+h^\mez\lA w_h\rA_{L^1H^s}.
	\end{equation*}
	We write ~$s-\frac{1}{2}=(\frac{d}{2}-1+\eps)+(s-\frac{d}{2}+\mez-\eps)$ where $\frac{d}{2}-1+\eps>\frac{d}{2}-1+\frac{\eps}{2+\eps}=\frac{d}{r}$.  The Sobolev embedding  $W^{s-\mez, r}\hookrightarrow W^{s-\frac{d}{2}+\mez-\eps,\infty}$ then concludes the proof.
\end{proof}
\subsection{Semi-classical Strichartz estimate for \eqref{eq:reg:0}}
From the preceding Proposition, one deduces the corresponding Strichartz estimate for $u_j\equiv \Delta_ju$ as a solution of \eqref{eq:reg:0} via the change of temporal variables $w_h(\sigma, x)=u_j(h^\mez\sigma, x)$ as follows.
\begin{coro}\label{Str:short}
If $u_j$ is solution to \eqref{eq:reg:0}, {\it i.e.}, $\mathcal{L}_ju_j=F_j$ with data $u_j^0$ and $u_j,~u_j^0, F_j$ are spectrally supported in $2^j\Cr^1$ then $u_j$ satisfies with $I_j=[0, 2^{-\frac{j}{2}}]= [0, h^\mez]$ and $\eps>0$ 
\[
\lA u_j\rA_{L^{2+\eps}(I_j;W^{s-\frac{d}{2}+\frac{3}{4}-\eps,\infty})}\les_\eps \lA u_j^0\rA_{H^s}+\lA F_j\rA_{L^1(I_j; H^s)}+\lA u_j\rA_{L^1H^s}.
\]
\end{coro}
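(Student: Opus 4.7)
The plan is to deduce Corollary \ref{Str:short} from Proposition \ref{Str:w} by simply inverting the semi-classical time change of variables $w_h(\sigma,x) = u_j(h^\mez \sigma, x)$ (with $h=2^{-j}$) that was constructed in Section 3.2 precisely in order to turn the equation \eqref{eq:reg:0} for $u_j$ on $I_j=[0,h^\mez]$ into the rescaled equation \eqref{eq:wh:0} for $w_h$ on $[0,1]$. Thus if $u_j$ solves $\mathcal{L}_j u_j = F_j$ with data $u_j^0$, then $w_h$ solves \eqref{eq:wh:0} with data $w_h(0)=u_j^0$ and source $G_h(\sigma,x)=-ih^\mez F_j(h^\mez\sigma,x)$; the frequency supports of $w_h$, $w_h^0$, $G_h$ all lie in $\ld\Cr^1=2^j\Cr^1$, since spectral support is invariant under rescaling in time.

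Next, apply Proposition \ref{Str:w} to $w_h$ with parameter $\eps>0$, and then restore the original time variable $t=h^\mez\sigma$. A direct computation using the Jacobian $dt=h^\mez d\sigma$ shows that, for any Banach space $X$ of functions of $x$ and any $p\ge 1$,
\[
\lA u_j\rA_{L^{p}(I_j;X)} \;=\; h^{\frac{1}{2p}}\,\lA w_h\rA_{L^{p}([0,1];X)},
\]
and in particular the explicit $h^\mez$ prefactor in the definition of $G_h$ exactly cancels the $h^{-\mez}$ coming from $d\sigma=h^{-\mez}dt$, yielding $\lA G_h\rA_{L^1([0,1];H^s)}=\lA F_j\rA_{L^1(I_j;H^s)}$ and $h^\mez\lA w_h\rA_{L^1([0,1];H^s)}=\lA u_j\rA_{L^1(I_j;H^s)}$. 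Substituting, Proposition \ref{Str:w} becomes
\[
\lA u_j\rA_{L^{2+\eps}(I_j;\,W^{s-\frac{d}{2}+\mez-\eps,\infty})} \;\les_\eps\; h^{\frac{1}{2(2+\eps)}}\lp \lA u_j^0\rA_{H^s}+\lA F_j\rA_{L^1(I_j;H^s)}+\lA u_j\rA_{L^1(I_j;H^s)}\rp.
\]

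It remains to trade the time-rescaling factor $h^{1/(2(2+\eps))}=2^{-j/(2(2+\eps))}$ for additional spatial regularity. Since $u_j$ is spectrally supported in the annulus $\{|\xi|\sim 2^j\}$, Bernstein's inequality gives $\lA u_j\rA_{W^{\mu+\theta,\infty}}\sim 2^{j\theta}\lA u_j\rA_{W^{\mu,\infty}}$ for any $\theta\ge 0$; choosing $\theta=\tfrac{1}{2(2+\eps)}$ absorbs the prefactor and upgrades the spatial regularity index from $s-\frac{d}{2}+\mez-\eps$ to $s-\frac{d}{2}+\mez-\eps+\tfrac{1}{2(2+\eps)}$, which tends to $s-\frac{d}{2}+\tq$ as $\eps\to 0^+$ and is in any case $\ge s-\frac{d}{2}+\tq-\eps$ after a harmless shrinking of the small parameter. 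The whole argument is just bookkeeping; there is no genuine analytic difficulty, and the only point requiring some attention is the identification of the two occurrences of the small parameter $\eps$ (in the time-Lebesgue index $2+\eps$ and in the regularity loss $\eps$) at the very end, which is permissible because both can be taken arbitrarily small independently.
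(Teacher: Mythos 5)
Your proposal is correct and is exactly the argument the paper intends: the paper does not write out a proof of the corollary but merely says it is deduced from Proposition \ref{Str:w} via the temporal rescaling $w_h(\sigma,x)=u_j(h^{1/2}\sigma,x)$, which is precisely what you carry out. The only wrinkle, which you flag yourself, is that the $h^{1/(2(2+\eps))}$ prefactor upgrades the regularity by $\tfrac14-\tfrac{\eps}{4(2+\eps)}$ rather than by a clean $\tfrac14$, so the two instances of $\eps$ in the corollary's statement are not literally the same small parameter obtained from a single application of Proposition \ref{Str:w}; this is harmless bookkeeping (and, in the way Corollary \ref{Str:short} is subsequently used in Corollary \ref{Str:long}, only the $L^2$ time-exponent is needed, so the slight mismatch in the time-Lebesgue index goes the right direction after H\"older on the finite interval $I_j$).
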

The next step consists in gluing the estimates on small time scales above to derive an estimate on the whole interval of time $[0, 1]$ to the price of loosing $\frac{1}{4}$ derivatives.
\begin{coro}\label{Str:long}
If $u_j$ is solution to $\mathcal{L}_ju_j=F_j$  with data $u_j^0$ and $u_j,~u_j^0,~F_j$ are spectrally supported in $\Cr^1_h$ then $u_j$ satisfies with $I=[0, T]$ and $\eps>0$
\[
\lA u_j\rA_{L^{2}(I;W^{s-\frac{d}{2}+\mez-\eps,\infty})}\les_\eps \lA F_j\rA_{L^2(I; H^{s-\mez})}+\lA u_j\rA_{L^\infty(I,H^s)}.
\]
\end{coro}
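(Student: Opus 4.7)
The plan is to partition $I=[0,T]$ into subintervals of semi-classical length $h^\mez=2^{-j/2}$ (writing $h:=2^{-j}$), apply Corollary \ref{Str:short} on each piece, and reassemble the estimates by squaring and summing. The $\frac{1}{4}$-derivative loss matches precisely the $h^{-1/4}$ growth picked up from the number $N\sim Th^{-\mez}$ of such subintervals. For $j$ below the threshold $j_0$ of Lemma \ref{check:a} only finitely many frequencies are involved, and the claim is immediate from Bernstein's inequality; hence I would assume $j\ge j_0$ throughout.

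First, I would write $I=\bigcup_{k=0}^{N-1}I_k$ with $I_k=[t_k,t_{k+1}]$, $|I_k|\le h^\mez$. Since $u_j$ satisfies $\mathcal{L}_j u_j=F_j$ on $I_k$, with $u_j(t_k)$ spectrally supported in $2^j\Cr^1$ (by definition of $\Delta_j u$), Corollary \ref{Str:short} applies on $I_k$ with initial datum $u_j(t_k)$. Setting $Y:=W^{s-\frac{d}{2}+\frac{3}{4}-\eps,\infty}$, this gives
\[
\lA u_j\rA_{L^{2+\eps}(I_k;Y)}\les \lA u_j(t_k)\rA_{H^s}+\lA F_j\rA_{L^1(I_k;H^s)}+\lA u_j\rA_{L^1(I_k;H^s)}.
\]
Hölder in time yields $\lA u_j\rA_{L^2(I_k;Y)}\le |I_k|^{\alpha}\lA u_j\rA_{L^{2+\eps}(I_k;Y)}$ with $\alpha=\eps/(2(2+\eps))>0$. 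Squaring, summing over $k$, and using $\sum_k\lA u_j(t_k)\rA_{H^s}^2\le N\lA u_j\rA_{L^\infty(I;H^s)}^2$, $\sum_k\lA F_j\rA_{L^1(I_k;H^s)}^2\le |I_k|\lA F_j\rA_{L^2(I;H^s)}^2$ (Cauchy--Schwarz on each $I_k$), and $\sum_k\lA u_j\rA_{L^1(I_k;H^s)}^2\le |I_k|^2 N\lA u_j\rA_{L^\infty(I;H^s)}^2$, I would arrive at
\[
\lA u_j\rA_{L^2(I;Y)}^2\les h^{\alpha-\mez}\lA u_j\rA_{L^\infty(I;H^s)}^2+h^{\alpha+\mez}\lA F_j\rA_{L^2(I;H^s)}^2.
\]

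Next, the spectral localization $\supp\widehat{u_j},\,\supp\widehat{F_j}\subset 2^j\Cr^1$ is used twice via Bernstein: first to replace $\lA F_j\rA_{L^2 H^s}$ by $h^{-\mez}\lA F_j\rA_{L^2 H^{s-\mez}}$, which brings both right-hand terms to a common prefactor $h^{\alpha-\mez}$; then to relate the left-hand norm to the target one via $\lA u_j\rA_{L^2(I;Y)}^2\sim h^{-\mez}\lA u_j\rA_{L^2(I;W^{s-\frac{d}{2}+\mez-\eps,\infty})}^2$. The net power of $h$ on the right-hand side then becomes $h^\alpha\le 1$ for $h\le 1$, producing the stated inequality.

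The only real obstacle is the careful bookkeeping of the various $h$-powers: $h^{-\mez}$ from the number of subintervals, $h^\alpha$ from the Hölder gain in time, $h^\mez$ from Cauchy--Schwarz between $L^1(I_k)$ and $L^2(I_k)$, $h^{-\mez}$ from Bernstein conversion of $F_j$ from $H^s$ to $H^{s-\mez}$, and $h^{1/4}$ from lowering the Besov index by $\frac{1}{4}$ on the left. These cancel exactly, which is the structural reason why a loss of precisely $\frac{1}{4}$ derivatives—and no more—suffices to pass from the semi-classical time window to the full interval $[0,T]$.
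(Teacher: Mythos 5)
Your proof is correct, and it reaches the stated inequality by a mildly different route from the paper's. The paper introduces a smooth time cutoff $\chi_{j,k}$ and works with $u_{j,k}=\chi_{j,k}u_j$, which has vanishing Cauchy data at $t=kh^{\mez}$; the data term from Corollary \ref{Str:short} is thus absent, replaced by the commutator forcing $h^{-\mez}\chi'u_j$, and after Cauchy--Schwarz, squaring, and summing over the (boundedly overlapping) intervals $I_{j,k}$ this gives the $h^{-\frac14}\lA u_j\rA_{L^2(I;H^s)}$ term; two boundary half-intervals $J$ then require a separate short estimate. You instead use an exact partition $I=\cup_k I_k$, apply Corollary \ref{Str:short} directly with datum $u_j(t_k)$, and bound $\sum_k\lA u_j(t_k)\rA_{H^s}^2\le N_h\lA u_j\rA_{L^\infty(I;H^s)}^2\sim h^{-\mez}\lA u_j\rA_{L^\infty(I;H^s)}^2$; after the two Bernstein conversions on $F_j$ and on the Besov index of the left-hand side this produces the same $h$-count. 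Your version is a bit more streamlined: it avoids both the cutoff-commutator device and the separate boundary pieces. The paper's cutoff trick is the more robust mechanism when the initial-data contribution cannot be estimated uniformly in $k$, but here, since the data norm is $H^s$ and one simply pays with the $L^\infty_t$ norm, there is no such obstruction. Both arguments rest on precisely the same cancellation: the $h^{-\frac14}$ growth coming from the $\sim h^{-\mez}$ subintervals is compensated exactly by lowering the H\"older regularity index on the left by $\frac14$, which is the structural reason behind the claimed loss.
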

\begin{proof}
Let $\chi \in C_0^\infty(0,2)$ equal to one on $[ \mez, \frac{3}{2}].$ For $0 \leq k \leq [Th^{-\mez}]-2$ define
$$
I_{j,k} = [kh^\mez, (k+2)h^\mez), \quad \chi_{j,k}(t) = \chi\Big( \frac{t-kh^\mez}{h^\mez}\Big), \quad  u_{j,k} = \chi_{j,k}(t)u_j.
$$
Then each $u_{j, k}$ is a solution to
\[
\mathcal{L}_ju_{j, k}=\chi_{j,k}F_j+h^{-\mez}\chi'\Big( \frac{t-kh^\mez}{h^\mez}\Big)u_j,\quad u_{j,k}(t=kh^\mez)=0,
\]
 from which we deduce by virtue of Corollary \ref{Str:short} 
\[
\lA u_{j, k}\rA_{L^{2}(I_{j,k};W^{s-\frac{d}{2}+\frac{3}{4}-\eps,\infty})}\les_\eps \lA F_j\rA_{L^1(I_{j,k}; H^s)}+h^{-\mez}\lA u_j\rA_{L^1(I_{j,k}; H^s)}. 
\]
Notice that $\chi_{j,k}=1$ on $\lp(k+\mez)h^\mez, (k+\tdm)h^\mez\rp$ we get
\begin{align*}
\lA u_{j, k}\rA_{L^{2}((k+\mez)h^\mez, (k+\tdm)h^\mez);W^{s-\frac{d}{2}+\frac{3}{4}-\eps,\infty})}&\les_\eps \lA F_j\rA_{L^1(I_{j,k}; H^s)}+h^{-\mez}\lA u_j\rA_{L^1(I_{j,k}; H^s)}\\
&\les_\eps h^{\frac{1}{4}}\lA F_j\rA_{L^2(I_{j,k}; H^s)}+h^{-\frac{1}{4}}\lA u_j\rA_{L^2(I_{j,k}; H^s)}.
\end{align*}
Squaring both sides of the above inequality and then summing in $0 \leq k \leq [Th^{-\mez}]-2=:N_h$  yields with $J_j:=[\mez h^\mez, (N_h-\mez)h^\mez)$
\[
\lA u_j\rA_{L^{2}(J_j;W^{s-\frac{d}{2}+\frac{3}{4}-\eps,\infty})}\les_\eps h^{\frac{1}{4}}\lA F_j\rA_{L^2(I; H^s)}+h^{-\frac{1}{4}}\lA u_j\rA_{L^2(I; H^s)}
\]
or, equivalently after multiplying by $h^{\frac{1}{4}}$
\bq\label{Strl:1}
\lA u_j\rA_{L^{2}(J_j;W^{s-\frac{d}{2}+\mez-\eps,\infty})}\les_\eps \lA F_j\rA_{L^2(I; H^{s-\mez})}+\lA u_j\rA_{L^2(I; H^s)}.
\eq
On the other hand, on $J=[0, \mez h^\mez]$ one can apply Corollary \ref{Str:short} and H\"older's inequality to have
\bq\label{Strl:2}
\begin{aligned}
\lA u_j\rA_{L^{2}(J;W^{s-\frac{d}{2}+\mez-\eps,\infty})}&\les 
h^{\frac{1}{4}}\lA u_j\rA_{L^{2}(J;W^{s-\frac{d}{2}+\frac{3}{4}-\frac{\eps}{2},\infty})}\\
&\les_\eps h^{\frac{1}{4}}\lA u_j^0\rA_{H^s}+h^{\frac{1}{2}}\lA F_j\rA_{L^2(J; H^s)}+h^\mez\lA u_j\rA_{L^2(J,H^s)}\\
&\les_\eps \lA u_j\rA_{L^\infty(I, H^{s-\frac{1}{4}})}+\lA F_j\rA_{L^2(I; H^{s-\mez})}.
\end{aligned}
\eq
Likewise, we have \eqref{Strl:2} with $J=[(N_h-\mez)h^\mez, T]$ and  thus  \eqref{Strl:1} concludes that
\[
\lA u_j\rA_{L^{2}(I;W^{s-\frac{d}{2}+\mez-\eps,\infty})}\les_\eps \lA F_j\rA_{L^2(I; H^{s-\mez})}+\lA u_j\rA_{L^\infty(I,H^s)}.
\]
\end{proof}
In the final step, we shall glue the estimates for $\Delta_j u$ over different frequency regimes to obtain an estimate for $u$, from which the corresponding estimates for $(\eta, \psi)$ follow.
\subsection{Concluding the proof of Theorem \ref{main:theo:0}}
If $u$ is a solution to \eqref{ww:reduce:0} with $u(0)=u^0$ then by \eqref{eq:reg:0}, the dyadic piece $\Delta_j u$ is a solution to $L_j\Delta_j u=F_j$ with $F_j$ given by \eqref{Fj:0}. Applying Corollary \ref{Str:long} one gets
\bq\label{Str:dyadic:0}
\lA \Delta_ju\rA_{L^{2}(I;W^{s-\frac{d}{2}+\mez-\eps,\infty})}\les_\eps\lA F_j\rA_{L^2(I; H^{s-\mez})}+\lA \Delta_ju\rA_{L^\infty(I; H^s)}. 
\eq
Recall that $F_j=F_j^1+R_j+F_j^3$ where $\lA R_j\rA_{H^s}\le \lA u\rA_{H^s}$ (see \eqref{bound:R:0j}) and  $F_j^k$  given in \eqref{Fj1:0},~\eqref{Fj:0}. Using the symbolic calculus Theorem \ref{theo:sc} one obtains without any difficulty that
\[
\lA F_j^1\rA_{L^2H^{s-\mez}}\les \lA u\rA_{L^2(I; H^s)}+\lA f\rA_{L^2(I; H^{s-\mez})}.
\]
For  $F_j^3$ we consider for example
\begin{align*}
 A_j&:=S_{{(j-3)}\mez}\gamma(x,D_x)\Delta_ju-S_{j-3}\gamma(x, D_x)\Delta_ju\\
&=\lp S_{{(j-3)}\mez}\gamma(x,D_x)\Delta_j u-\gamma(x, D_x)\Delta_j u\rp+\lp \gamma(x, D_x)\Delta_j u-S_{j-3}\gamma(x, D_x)\Delta_ju\rp\\
&=A_j^1+A_j^2.
\end{align*}
More generally, let $a\in \Gamma_\rho^m$ be homogeneous of degree $m$ in $\xi$.  Using the spherical harmonic decomposition we can assume $a(x, \xi)=b(x)c(\xi)$ with $b\in W^{\rho, \infty}$ and $c$  homogeneous of order $m$. Then $S_{\delta j}(a)(x, \xi)=S_{\delta j}(b)(x)c(\xi)$ and
\bq\label{Sja:a}
\lA (S_{\delta j}(a)(x,D_x)-a(x,D_x)) v\rA_{L^2}\le \lA S_{\delta j}(b) -b\rA_{L^\infty}\lA c(D_x)v\rA_{L^2}\les 2^{-\delta j\rho}\lA v\rA_{H^m}.
\eq
Since $\gamma\in \Gamma^{\tdm}_{2}$ is homogeneous of degree $\tdm$ in $\xi$, the $H^{s-\mez}$-norm of $A_j^1$  can be bounded by 
\[
2^{j(s-\mez)-\mez j 2+\tdm j}\lA \Delta_ju\rA_{L^2}\les \lA u\rA_{H^s}
\]
while 
\[
\lA A_j^2\rA_{H^{s-\mez}}\les 2^{j(s-\mez)-2j+ j}\lA \Delta_ju\rA_{H^\tdm}\les \lA u\rA_{H^{s-1}}.
\]
Similarly, we have 
\[
\lA F_j^3\rA_{L^2H^{s-\mez}}\les \lA u\rA_{L^2(I; H^s)}+\lA f\rA_{L^2(I; H^s)}.
\]
The estimate \eqref{Str:dyadic:0} then implies
\[
\lA \Delta_ju\rA_{L^{2}(I;W^{s-\frac{d}{2}+\mez-\eps,\infty})}\les_\eps \lA u\rA_{L^\infty(I; H^s)}. 
\]
Gluing these estimates together leads to
\bq\label{Str:u}
\begin{aligned}
\lA u\rA_{L^{2}(I;W^{s-\frac{d}{2}+\mez-2\eps,\infty})}&\le \sum_{j} 2^{-j\eps}\lA \Delta_ju\rA_{L^{2}(I;W^{s-\frac{d}{2}+\mez-\eps,\infty})}\\
&\les_\eps \lA u\rA_{L^\infty(I; H^s)}+\lA f\rA_{L^2(I; H^{s-\mez})}. 
\end{aligned}
\eq
Recall from Theorem \ref{theo:sym:0} that $u=T_p\eta+iT_q(\psi-T_B\eta)$. From \eqref{Str:u} one can use the symbolic calculus for paradifferential operators in Theorem \ref{theo:sc} to recover the corresponding estimates for $(\eta, \psi)$ (cf. \cite{ABZ1}, \cite{ABZ2}):
\[
\lA \eta\rA_{L^{2}(I;W^{s-\frac{d}{2}+1-2\eps,\infty})}+\lA \psi\rA_{L^{2}(I;W^{s-\frac{d}{2}+\mez-2\eps,\infty})}\les \cF_\eps\left(\lA(\eta, \psi)\rA_{L^\infty([0, T]; H^{s+\mez}\times H^s)}\right),
\]
where $\cF:\xR^+\to \xR^+$. The proof of  Theorem \ref{main:theo:0} is complete.
\section{Proof of Theorem \ref{main:theo}}
We consider three parameters $\delta\in (0, 1),~r_0\in [0, 1],~r_1\in [0,\mez]$ which shall be determined later. Assume furthermore  that
\bq\label{defi:r0r1}
V\in L^\infty(I; W^{1+r_0,\infty}(\xR^d)),~\gamma(\cdot, \xi)\in L^\infty(I; W^{\mez+r_1,\infty}(\xR^d)).
\eq
\subsection{Littlewood-Paley reduction}
For every $j\geq 0$, the dyadic piece~$\Delta_ju$ solves 
\begin{equation}\label{eq:dyadic}
	\lp\partial_t+T_V\cdot\nabla +iT_\gamma\rp\Delta_ju=F_j,
\end{equation}
where
\begin{equation}\label{Fj1}
	F^1_j:=\Delta_jf +i\lb T_\gamma,\Delta_j\rb+\lb T_V,\Delta_j\rb\cdot\nabla u.
\end{equation}
In view of Proposition \ref{para:pseudo}, one has
\begin{equation} \label{eq:pseudo}
	\lp\partial_t+S_{j-3}(V)\cdot\nabla+iS_{j-3}(\gamma)(x, D_x) \rp\Delta_ju=F^2_j
\end{equation}
with
\bq\label{Fj2}
F^2_j:=F^1_j +R_j,
\eq
 $R_ju$ spectrally supported in an annulus $\{ c_1^{-1}2^{j-1}\le |\xi|\le c_12^{j+1}\}$ and
\bq\label{bound:Rj}
\lA R_j\rA_{H^s}\les \lA u\rA_{H^s}.
\eq
Now, let $\mez<c_1<...<c_5$, $\Cr^k:=\{(2c_k)^{-1}\le |\xi|\le 2c_k\},~k=\overline{1,5}$ and 
\[
\tph\in C^\infty,~\supp\tph\subset \Cr^5,~\tph\equiv 1~\text{on}~\Cr^4.
\]
Equation~(\ref{eq:pseudo}) is then equivalent to
\bq\label{eq:reg}
	\lp\partial_t+ S_{{(j-3)}\delta}(V)\cdot\nabla+iS_{{(j-3)}\delta}(\gamma)(x, D_x)\tph(2^{-j}D_x)\rp\Delta_ju=F_j,
\eq
 with
\begin{multline}	\label{Fj}
	F_j=F_j^2+F_j^3:=F_j^2+i\lp S_{{(j-3)}\delta}\gamma(x,D_x)-S_{j-3}\gamma(x, D_x)\rp\Delta_ju\\ 
	      +\lp S_{{(j-3)}\delta}(V)-S_{j-3}(V)\rp\cdot\nabla \Delta_ju.
\end{multline}
Let us define the operator corresponding to the homogeneous problem of \eqref{eq:reg}
\bq\label{Lu}
\mathcal{L}_j:=\partial_t+ S_{{(j-3)}\delta}(V)\cdot\nabla+iS_{{(j-3)}\delta}(\gamma)(x, D_x)\tph(2^{-j}D_x).
\eq
To prove Strichartz estimate for $\Delta_ju$ as a solution to \eqref{eq:reg}, we shall first establish a ``pseudo dispersive estimate'' for $\mathcal{L}_j$. Set 
\[
h:=2^{-j},~\widetilde h:=h^{\mez}.
\]
\subsection{Straightening the vector field}\label{Straighten}
Following \cite{ABZ4} we straighten the vector field $\partial_t+S_{(j-3)\delta}\cdot \nabla$ by considering the system 
\begin{equation}\label{eqdif}
\left \{
\begin{aligned}
&\dot{X}_k(s) =  S_{(j-3)\delta}(V_k)(s,X(s)), \quad 1 \leq k \leq d,\quad X=(X_1,\ldots,X_d) \\
&X_k(0) =x_k.
\end{aligned}
\right.
\end{equation}
Since $V\in L^\infty([0, T]; L^\infty_x)$, system \eqref{eqdif} has a unique solution on $I=[0, T]$, which shall be denoted for simplicity by $X(s, x; h)$, or even $X(s, x)$. Estimates for the flow $s\mapsto X(s, \cdot)$ is given in the next proposition.
\begin{prop}\label{est:X}
For fixed $(s,h)$ the map $x \mapsto X(s, x; h)$ belongs to $C^\infty (\xR^d, \xR^d).$ Moreover,  for all $(s,h) \in I\times (0, 1]$  we have
\begin{align}
& \Vert (\partial_x X) (s, \cdot; h) -\text{Id} \Vert_{L^\infty(\xR^d)} 
\le \cF\big(\lA V\rA_{L^\infty([0, T]; W^{1,\infty}}\big)|s|,\quad \label{X1}\\
& \Vert (\partial_x^\alpha X) (s, \cdot; h)  \Vert_{L^\infty(\xR^d)} 
\le \cF_\alpha\big(\lA V\rA_{L^\infty([0, T]; W^{1+r_0,\infty}}\big)h^{-\delta(|\alpha|-(1+r_0))}|s|, \quad    \vert \alpha \vert \ge 2  \label{X2},
\end{align}
where $\cF$, $\cF_\alpha:\xR^+\to \xR^+$.
\end{prop}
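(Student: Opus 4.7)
The plan is to prove \eqref{X1} by a single Gr\"onwall argument and \eqref{X2} by induction on $|\alpha|$, combining the Fa\`a di Bruno formula with the Bernstein-type estimate of Lemma \ref{Bernstein}. Smoothness of $x \mapsto X(s,x;h)$ is immediate because the smoothed drift $S_{(j-3)\delta}(V) = \psi(2^{-(j-3)\delta}D_x)V$ is $C^\infty$ in $x$ (being the convolution of $V$ with a Schwartz kernel), so standard $C^\infty$-dependence on initial data for ODEs applies.

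For \eqref{X1} I set $Z(s,x) := \partial_x X(s,x;h) - \mathrm{Id}$. Differentiating \eqref{eqdif} in $x$ gives the linear variational equation
$$\dot{Z}(s) = \nabla_x S_{(j-3)\delta}(V)(s, X(s))\bigl(\mathrm{Id} + Z(s)\bigr), \qquad Z(0) = 0.$$
Since the Littlewood-Paley cut-off satisfies $\|\nabla_x S_{(j-3)\delta}(V)(s,\cdot)\|_{L^\infty} \le C\|V(s,\cdot)\|_{W^{1,\infty}}$ uniformly in $j$, Gr\"onwall's inequality yields $\|Z(s,\cdot)\|_{L^\infty} \le \cF(\|V\|_{L^\infty W^{1,\infty}})|s|$, which is \eqref{X1}.

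For \eqref{X2} with $|\alpha|\ge 2$, I apply $\partial_x^\alpha$ to \eqref{eqdif} and invoke Fa\`a di Bruno to obtain
$$\partial_s\partial_x^\alpha X_k(s,x) = \nabla_x S_{(j-3)\delta}(V_k)(s,X(s,x))\cdot\partial_x^\alpha X(s,x) + R_\alpha(s,x),$$
where $R_\alpha$ collects all composite terms of the form $c_\beta\, \partial^\beta S_{(j-3)\delta}(V_k)(s,X(s))\prod_{i=1}^k \partial_x^{\gamma_i}X$ with $|\beta|=k\in\{2,\dots,|\alpha|\}$, $|\gamma_i|\ge 1$ and $\sum_i|\gamma_i|=|\alpha|$. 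For $|\beta|\ge 2$, Lemma \ref{Bernstein} applied with $\rho=1+r_0$ gives
$$\|\partial^\beta S_{(j-3)\delta}(V)\|_{L^\infty}\le C\,2^{(j-3)\delta(|\beta|-(1+r_0))}\|V\|_{W^{1+r_0,\infty}};$$
the factors $\partial_x^{\gamma_i}X$ with $|\gamma_i|\ge 2$ are controlled by the induction hypothesis and those with $|\gamma_i|=1$ by \eqref{X1}. Then Gr\"onwall applied to the variational ODE with zero initial data and driving force $R_\alpha$ yields \eqref{X2}.

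The main technical obstacle is the bookkeeping needed to check that every term in $R_\alpha$ contributes an $h^{-\delta}$ exponent no larger than $|\alpha|-(1+r_0)$. For a term with $|\beta|=k$ and $p$ blocks of size $\ge 2$ (so $q=k-p$ blocks of size $1$ and $\sum_{|\gamma_i|\ge 2}|\gamma_i| = |\alpha|-q$), the total exponent of $h^{-\delta}$ computes to
$$(k-1-r_0) + \sum_{|\gamma_i|\ge 2}(|\gamma_i|-1-r_0) = |\alpha|-1-r_0-p r_0 \;\le\; |\alpha|-1-r_0,$$
with equality exactly when $p=0$, i.e.\ $k=|\alpha|$ and all $|\gamma_i|=1$. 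In that boundary case the required $|s|$ factor is recovered by integrating the variational equation from zero initial data, while for $p\ge 1$ it is supplied by the induction hypothesis on the $p$ blocks of size $\ge 2$.
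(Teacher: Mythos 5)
Your proof is correct and follows essentially the same route as the paper: Gr\"onwall on the variational equation $\dot Z = \nabla_x S_{(j-3)\delta}(V)(s,X)(\mathrm{Id}+Z)$ for \eqref{X1}, and induction on $|\alpha|$ combining the Fa\`a di Bruno formula with the Bernstein-type bound of Lemma \ref{Bernstein} and another Gr\"onwall for \eqref{X2}. The only cosmetic difference is that you carry the extra gain $h^{\delta p r_0}$ coming from the $p$ blocks of size $\ge 2$, whereas the paper deliberately weakens the inductive bound to $h^{-\delta(|\nu|-1)}$ so that the exponent telescopes more simply; both choices give the same final exponent $|\alpha|-1-r_0$.
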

\begin{proof}
Here we follow the poof Proposition $4.10$, \cite{ABZ4}. The improvement  is due to the following estimate (by applying Lemma \ref{Bernstein})
\bq\label{est:flow:improve}
\lA \partial_x^\beta S_{j\delta}(V)(s)\rA_{L^\infty(\xR^d)}\le C_\beta h^{-\delta(|\beta|-1-r_0)}\lA V(s)\rA_{W^{1+r_0,\infty}}\quad\forall |\beta|\ge 2.
\eq
$(i)$ To prove \eqref{X1} we differentiate \label{eqdiff} with respect to $x_l$ to obtain
\begin{equation*}
\left\{
\begin{aligned}
&\dot{\frac{\partial X_k}{\partial x_l}}(s) = 
 \sum_{q = 1}^d S_{j\delta}\Big (\frac{\partial V_k}{\partial x_q}\Big)(s,X(s))\frac{\partial X_q}{\partial x_l}(s)\\
&\frac{\partial X_k}{\partial x_l}(0) = \delta_{kl}
\end{aligned}
\right. 
\end{equation*}
from which we deduce 
\begin{equation}\label{dX}
\frac{\partial X_k}{\partial x_l}(s) = \delta_{kl} 
+ \int_0^s  \sum_{q = 1}^d S_{j\delta}
\Big (\frac{\partial V_k}{\partial x_q}\Big)(\sigma,X(\sigma))\frac{\partial X_q}{\partial x_l}(\sigma) \, d \sigma.
\end{equation}
Setting 
$\vert \nabla X\vert = \sum_{k,l =1}^d \vert \frac{\partial X_k}{\partial x_l} \vert$ 
we obtain from \eqref{dX}
$$
\vert \nabla X(s) \vert \leq C_d + \int_0^s \vert \nabla V(\sigma, X(\sigma)) \vert  \, \vert \nabla X(\sigma) \vert \, d \sigma.
$$
The Gronwall inequality implies 
\begin{equation}\label{Gron}
 \vert \nabla X(s) \vert \leq \mathcal{F} (\Vert V \Vert_{W^{1,\infty}}) \quad \forall s \in I. 
 \end{equation}
Coming back to \eqref{dX} and using \eqref{Gron} lead to
$$
\la \frac{\partial X }{\partial x}(s) - Id \ra \leq  \mathcal{F} (\Vert V \Vert_{W^{1,\infty}}) \int_0^s \Vert \nabla V(\sigma, \cdot)  \Vert_{L^\infty(\xR^d)} 
\, d\sigma \leq \mathcal{F}_1 (\Vert V \Vert_{W^{1,\infty}})|s|.
$$
$(ii)$ We shall prove  \eqref{X2} for $|\alpha|=2$ first and then prove by induction on $\vert \alpha \vert$ that the estimates
$$
\Vert(\partial_x^\alpha X)(s;\cdot, h)\Vert_{L^\infty(\xR^d)} \leq \mathcal{F}_\alpha(\Vert V \Vert_{W^{1+r_0,\infty}}) h^{-\delta(\vert \alpha \vert -1-r_0)}
$$
for  $2 \leq \vert \alpha \vert \leq k$ implies \eqref{X2} for  $\vert \alpha \vert = k+1.$ \\
 Differentiating  $\vert \alpha \vert$ times ($|\alpha|\ge 2$) the system \eqref{eqdif} and using the  Faa-di-Bruno formula  we obtain
\begin{equation}\label{rec1}
 \frac{d}{ds}\big(\partial_x^\alpha X\big)(s) = S_{j\delta}(\nabla V)(s,X(s)) \partial_x^\alpha X +(1) 
 \end{equation}
where the term $(1)$ is a finite linear combination of terms of the form
$$
A_\beta (s,x) = \partial_x^\beta \big(S_{j\delta}(V)\big)(s,X(s)) \prod_{i=1}^q \big(\partial_x^{L_i} X(s)\big)^{K_i}
$$
where 
$$
2 \leq \vert \beta \vert \leq \vert \alpha \vert, \quad   \vert L_i \vert, \vert K_i\vert \ge 1, \quad \sum_{i=1}^q \vert K_i \vert L_i = \alpha, \quad \sum_{i=1}^q K_i = \beta.
$$
1. When $|\alpha|=2$, we have
\[
A_\beta (s,x) = \partial_x^\beta \big(S_{j\delta}(V)\big)(s,X(s)) \prod_{i=1}^q \big(\partial_x^{L_i} X(s)\big)^{K_i}
\]
with $|L_i|=1$ and $|\beta|=|\alpha|=2$. It then follows from $(i)$ that 
\[
\vert  \prod_{i=1}^q \big(\partial_x^{L_i} X(s)\big)^{K_i}\vert \le \mathcal{F} (\Vert V \Vert_{W^{1,\infty}}).
\]
On the other hand, we have by \eqref{est:flow:improve} 
\[
\lA \partial_x^\beta S_{j\delta}(V)(s)\rA_{L^\infty(\xR^d)}\le Ch^{-\delta(|\alpha|-1-r_0)}\lA V(s)\rA_{W^{1+r_0,\infty}}.
\]
Consequently, it holds that 
\[
\Vert (1)\Vert_{L^\infty(\xR^d)}\le h^{-\delta(|\alpha|-1-r_0)}\mathcal{F} (\Vert V \Vert_{W^{1+r_0,\infty}}),
\]
from which we obtain \eqref{X2} for $|\alpha|=2$ by using \eqref{rec1} and Gronwall's inequality.\\
2. Assuming now that \eqref{X2} holds with $2\le |\alpha|\ge  k$, we shall prove it for $|\alpha|=k+1$. Indeed,  from \eqref{X1} and the induction hypothesis it holds for any $1\le |\nu|\le k$ that 
\[
\Vert (\partial_x^\nu X) (s, \cdot; h)  \Vert_{L^\infty(\xR^d)} 
\le_\alpha \cF\big(\lA V\rA_{L^\infty([0, T]; W^{1+r_0,\infty}}\big)h^{-\delta(|\nu|-1)}|s|.
\]
Because $\vert \beta\vert \ge 2$ and $\vert L_i \vert \ge 1$, using \eqref{est:flow:improve} and the preceding estimate we obtain
\begin{equation*}
\begin{aligned}
\Vert A_\beta (s,\cdot) \Vert_{L^\infty(\xR^d)} 
&\leq \big\Vert \partial_x^\beta \big(S_{j\delta}(V)\big)(s,\cdot)\big\Vert_{L^\infty(\xR^d)} \prod_{i=1}^q 
\Big\Vert \partial_x^{L_i} X(s, \cdot)\Big\Vert _{L^\infty(\xR^d)}^{\vert K_i \vert}\\[1ex]
&\leq C h^{-\delta (\vert \beta \vert -1-r_0)}\Vert V(s,\cdot) \Vert_{W^{1+r_0, \infty}} 
h^{-\delta \sum_{i=1}^q \vert K_i\vert(\vert L_i \vert -1)}\mathcal{F} (\Vert V \Vert_{W^{1+r_0,\infty}}) \\[1ex]
&\leq h^{-\delta(\vert \alpha \vert -1-r_0)}\mathcal{F} (\Vert V \Vert_{W^{1+r_0,\infty}})\Vert V(s,\cdot) \Vert_{W^{1+r_0, \infty}}.
\end{aligned}
\end{equation*}
As before, we conclude by \eqref{rec1} and Gronwall's inequality.

\end{proof}
In view of \eqref{X1} the mapping $x\mapsto X(s, x; h)$ is a $C^\infty$-diffeomorphism when $0\le s\le T_0$ small enough. This restriction is harmless for one can iterate the final estimate over time intervals of length $T_0$ which depends only on $\lA V\rA_{L^\infty([0, T]; W^{1,\infty}})$.\\
Now, in \eqref{Lu} we first  make the change of spatial variables 
\bq
v_h(t, y)=u_j(t, X(t, y; h))
\eq
so that 
\bq\label{dt:vh}
\lp\partial_t+ S_{{(j-3)}\delta}(V)\cdot\nabla\rp u_j(t, X(t, y; h)=\partial_tv_h(t, y).
\eq
Denoting 
\bq\label{def:qh}
q_h(x, \xi):=S_{{(j-3)}\delta}(\gamma)(x, \xi)\tph(h\xi),
\eq
let us compute  this dispersive term after the above change of variables. To this end, set
\begin{equation}\label{MHJ}
\begin{aligned}
H(y,y') &= \int_0^1 \frac{\partial X}{\partial x}(\lambda y +(1-\lambda) y') \, d\lambda, 
\quad M(y,y') =\big(\, {}\!^tH(y,y') \big)^{-1} \\
M_0(y)  &= \Big(\, {}\!^t  \Big(\frac{\partial X}{\partial x}(y)\Big)  \Big)^{-1}, 
\quad  J(y,y') = \Big\vert \det \Big(\frac{\partial X}{\partial x}(y')\Big)\Big\vert \vert \det M(y,y') \vert.
\end{aligned}
\end{equation}
Then,
\[
(\Op(q_h)u_j)\circ X(y)=(2\pi)^{-d}\iint e^{i(X(y)-x')\cdot \eta } q_h(X(y), \eta) u_j(x') dx' d\eta.
\]
Now, we make two changes of variables $x'=X(y')$ and $\eta=M(y, y')\zeta$ to obtain
\[
(\Op(q_h)u_j)\circ X(y)= (2\pi)^{-d} \iint e^{i (y-y')\cdot \zeta } q_h\big(X(y), M(y,y')\zeta\big) J(y,y') v_h(y') \,dy' \,d\zeta.
\]
Observe that the above pseudo-differential operator is still of order $\mez$. To change its order to $1$, we make another change of spatial variables
\bq
y=h^\mez z=\th z,\quad y'=\th z',\quad ~w_h(z')=v_h(\th z'),\quad \xi=\th \zeta
\eq
so that
\begin{multline}\label{A1}
(\Op(q_h)u_j)\circ X(y)  = (2\pi)^{-d} \iint e^{i (z-z')\cdot \xi } 
 q_h\big(X(\th z), M\big(\th z,\th z'\big)\th ^{-1}\xi\big)\times \\
 \times J\big(\th z,\th z'\big) w_h(z') \,dz' \,d\xi.
 \end{multline}
Summing up, with 
\bq\label{def:ph}
p_h(z, z', \xi):= q_h\big(X(\th z), M\big(\th z,\th z'\big)\th ^{-1}\xi\big)
 J\big(\th z,\th z'\big), \quad w_h(t, z)=u_j(t, X(t, \th z))
\eq
it holds that
\[
(\Op(q_h)u_j)\circ X(\th z)  =\Op(p_h)w_h(z),
\]
which, combing with \eqref{dt:vh} and \eqref{Lu}, yields
\bq\label{eq:order1}
\lp\mathcal{L}_ju_j\rp(t, X(t, \th z))=\lp\partial_t +i\Op(p_h)\rp w_h(t, z),\quad  \quad w_h(t, z)=u_j(t, X(t, \th z)).
\eq
We have transformed the operator $\mathcal{L}_j$ of order $\mez$ into the right-hand side of \eqref{eq:order1} which has order $1$.
\subsection{Approximation of the symbol $p_h$}\label{Approximation}
Observe that $p_h$  depends on $(z, z', \xi)$ which is not in the standard form to use the phase  space transform in \cite{KT}. We will approximate $p_h$ by some symbol depending only on $(z, \xi)$. A general result can be found in Proposition $0.3A$ \cite{Taylor}. However, we will inspect more carefully the smoothness of $p_h$ to obtain better estimates for the error. To do this, we write as in \eqref{toW}-\eqref{toW:remainder}
\begin{align*}
p_h(z, z', \xi)&=p_h(z, z, \xi)+\int_0^1\partial_{z'}p_h(z, z+s(z'-z), \xi)ds (z'-z)\\
&:=p_h^0( z,\xi)+r_h^0(z, z' ,\xi)(z'-z),
\end{align*}
where 
\bq\label{def:p0}
p_h^0( z, \xi)=p_h(z,z,\xi)= q_h\big(X(\th z), M_0(\th z)\th ^{-1}\xi\big).
 \eq
On the other hand,
\[
\Op(r_h^0.(z'-z))w(z)=-i\Op(r_h) w(z)
\]
with 
\[
r_h(z, z', \xi)=\int_0^1\partial_\xi\partial_{z'} p_h(z, z+s(z'-z), \xi)ds.
\]
 To simplify notations, we denote $[z,z']_s=z+s(z'-z)$ so that 
\bq\label{def:rh}
 r_h(z, z', \xi)=\int_0^1\partial_\xi \partial_{z'}q_h\big(X(\th z), M\big(\th z,\th [z,z']_s\big)\th ^{-1}\xi\big) J\big(\th z,\th [z,z']_s\big)ds.
\eq
In conclusion, $\Op(p_h)=\Op(p_h^0)-i\Op(r_h)$.
\subsubsection{The symbol $p^0_h$}
First,  Lemma \ref{est:X} implies directly estimates for $M$ and $J$.
\begin{lemm}\label{est:MJ} It holds for all $(\alpha, \alpha')\in (\xN^d)^2$ that
\[
\vert \partial_z^\alpha\partial_{z'}^{\alpha'}M(z, z')\vert+\vert \partial_z^\alpha\partial_{z'}^{\alpha'}J(z, z')\vert \les_{\alpha, \alpha'}
\left\{
\begin{aligned}
1  &\quad\text{if}~|\alpha|+|\alpha'|= 0,\\
\th^{-2\delta(|\alpha|+|\alpha'|-r_0)}& \quad\text{if}~|\alpha|+|\alpha'|\ge 1.
\end{aligned}
\right.
\]
\end{lemm}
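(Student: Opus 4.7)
The plan has three steps: (1) bound derivatives of $H(y,y')$ and of $\det\partial_x X(y')$ in the unrescaled variables; (2) propagate these bounds to $M=({}^tH)^{-1}$ and $J=|\det\partial_xX(y')|\,|\det M|$; (3) pass to the rescaled variables, i.e.\ account for the fact that the notation $M(z,z')$, $J(z,z')$ in the lemma is shorthand for $M(\th z,\th z')$, $J(\th z,\th z')$ (as implicit in the definition of $p_h$ in \eqref{def:ph}), so that each $\partial_z$ or $\partial_{z'}$ produces an extra factor~$\th$.

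For Step 1 I would differentiate under the integral sign in
\[
H(y,y')=\int_0^1(\partial_xX)(\lambda y+(1-\lambda)y')\,d\lambda,
\]
which for $k:=|\alpha|+|\alpha'|\ge 1$ produces $\partial_x^{k+1}X$ in the integrand. The bound \eqref{X2} of Proposition~\ref{est:X} then yields, uniformly on a time interval of bounded length,
\[
\lA\partial_y^\alpha\partial_{y'}^{\alpha'}H\rA_{L^\infty}\les h^{-\delta(k-r_0)},
\]
with an identical computation for $\det\partial_xX(y')$. For $k=0$, \eqref{X1} shows that $H-I$ is small on a short enough time interval, so $|\det H|\ge 1/2$ and $\lA M\rA_{L^\infty}\les 1$; the length restriction is harmless thanks to the iteration scheme used in Section~\ref{Straighten}.

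For Step 2 I would write $M=(\det H)^{-1}\,\mathrm{cof}({}^tH)$ and apply Leibniz and Faà di Bruno. Each term in the resulting expansion of $\partial_y^\alpha\partial_{y'}^{\alpha'}M$ has the form $(\det H)^{-(1+q)}\prod_{i}\partial^{\beta_i}H$ with $\sum_i\beta_i=\alpha+\alpha'$; if $q'\ge 1$ of the $\beta_i$ are non-zero, the Step-1 estimate gives a bound $h^{-\delta(k-q'r_0)}$, whose maximum over $q'\ge 1$ is attained at $q'=1$ (splitting a single $\partial^{m}H$ into $\partial^{m_1}H\,\partial^{m_2}H$ multiplies the bound by $h^{\delta r_0}\le 1$). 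This yields $\lA\partial_y^\alpha\partial_{y'}^{\alpha'}M\rA_{L^\infty}\les h^{-\delta(k-r_0)}$ for $k\ge 1$, and the same scheme applied through Leibniz to $J$ gives the identical bound.

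For Step 3, each $\partial_z$ or $\partial_{z'}$ produces a $\th$, so for $k\ge 1$
\[
|\partial_z^\alpha\partial_{z'}^{\alpha'}M|+|\partial_z^\alpha\partial_{z'}^{\alpha'}J|\les \th^{k}h^{-\delta(k-r_0)}=\th^{\,k-2\delta(k-r_0)}\le\th^{-2\delta(k-r_0)},
\]
using $\th\le 1$ and $k\ge 1$; the $k=0$ case is already settled. The only genuinely subtle point I expect to encounter is the Faà di Bruno bookkeeping in Step 2, i.e.\ checking that the single-concentration arrangement dominates all the many combinatorial terms; once the monotonicity of the bound under factor-splitting is noted, the argument becomes essentially mechanical.
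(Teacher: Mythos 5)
The paper offers no explicit proof of this lemma, noting only that it follows directly from Proposition~\ref{est:X}; your Steps 1 and 2 supply that derivation correctly, and the resulting bound $h^{-\delta(k-r_0)}$ is exactly the stated $\th^{-2\delta(k-r_0)}$ since $\th=h^{1/2}$. Step 3, however, rests on a misreading: the $M(z,z')$ and $J(z,z')$ appearing in the lemma denote the functions of~\eqref{MHJ} evaluated at generic arguments, \emph{not} pre-composed with the $\th$-scaling. This is clear from the proof of Lemma~\ref{est:rh}, where the present lemma is invoked for the further-composed $J(\th z,\th[z,z']_s)$ and the chain-rule factors of $\th$ are inserted at that point — these would be double-counted if the lemma already carried them. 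Your Step 3 happens to be harmless, since the extra $\th^{k}$ you introduce is immediately discarded by your closing $\le$, but the ``shorthand'' claim should be dropped; Steps 1 and 2 already finish the proof.
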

On the other hand, by Bernstein's inequalities and the fact that on the support of $\tph(h\xi)$,~$|\xi|\sim \th^{-2}$, we can estimate the derivatives of $q_h$, given by \eqref{def:qh}, as follows.
\begin{lemm}\label{est:qh}
We have for all $(\alpha, \beta)\in (\xN^d)^2$
\[
\la \partial_x^\alpha\partial_\xi^\beta q_h(x, \xi)\ra \les_{\alpha, \beta}
\left\{
\begin{aligned} 
\th^{-1+2|\beta|},\quad\text{if}~|\alpha|=0,\\
\th^{-1-2\delta(|\alpha|-(\mez+r_1))+2|\beta|},\quad\text{if}~|\alpha|\ge 1.
\end{aligned}
\right. 
\]
\end{lemm}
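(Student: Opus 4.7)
The plan is to prove the bound by combining three ingredients: the order of $\gamma$ in $\xi$, the Bernstein-type inequality from Lemma \ref{Bernstein}, and the frequency localization imposed by the cutoff $\tph(h\xi)$, which forces $|\xi|\sim h^{-1}=\th^{-2}$. Recall that for pure gravity waves $\gamma=\sqrt{a\ld}\in \Gamma^{1/2}_{1/2+r_1}$ by assumption \eqref{defi:r0r1}, so standard symbol estimates give $|\partial_\xi^\beta \gamma(\cdot,\xi)|\lesssim (1+|\xi|)^{1/2-|\beta|}$, uniformly in $x$.

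I would start with the case $|\alpha|=0$. Since convolution of $\gamma$ in $x$ with the Littlewood--Paley mollifier $\psi(2^{-(j-3)\delta}D_x)$ preserves pointwise bounds in $\xi$, we have $|\partial_\xi^\beta S_{(j-3)\delta}(\gamma)(x,\xi)|\lesssim (1+|\xi|)^{1/2-|\beta|}$. On $\supp\tph(h\xi)$ this is of size $\th^{-1+2|\beta|}$. A Leibniz expansion distributing the $\beta$ $\xi$-derivatives between $S_{(j-3)\delta}(\gamma)$ and $\tph(h\xi)$ does not degrade the bound, since each derivative falling on $\tph(h\xi)$ produces a factor $h=\th^{2}$ while reducing the count of derivatives on the symbol by one and each derivative falling on $S_{(j-3)\delta}(\gamma)$ changes its order in $\xi$ by $-1$, i.e.\ by a factor $\sim \th^{2}$ on the support; both contributions match $\th^{-1+2|\beta|}$.

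For $|\alpha|\ge 1$ I would invoke Lemma \ref{Bernstein} with $\rho=\mez+r_1\in[\mez,1]$, which applies since $|\alpha|\ge 1\ge \rho$. This yields
\[
|\partial_x^\alpha\partial_\xi^\beta S_{(j-3)\delta}(\gamma)(x,\xi)|\lesssim 2^{(j-3)\delta(|\alpha|-(\mez+r_1))}\,\|\partial_\xi^\beta\gamma(\cdot,\xi)\|_{W^{\mez+r_1,\infty}}.
\]
Using $2^{(j-3)\delta}\sim h^{-\delta}=\th^{-2\delta}$ together with $\|\partial_\xi^\beta\gamma(\cdot,\xi)\|_{W^{\mez+r_1,\infty}}\lesssim(1+|\xi|)^{1/2-|\beta|}\sim \th^{-1+2|\beta|}$ on the support of $\tph(h\xi)$, I obtain exactly
\[
\th^{-2\delta(|\alpha|-(\mez+r_1))}\,\th^{-1+2|\beta|}=\th^{-1-2\delta(|\alpha|-(\mez+r_1))+2|\beta|}.
\]
The factor $\tph(h\xi)$ depends only on $\xi$, so the $x$-derivatives fall entirely on $S_{(j-3)\delta}(\gamma)$ and the remaining Leibniz expansion in the $\xi$-variable is handled exactly as in the case $|\alpha|=0$.

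The only mildly delicate point is making sure that the Leibniz redistribution of $\xi$-derivatives between the two factors never improves the estimate in a way that would invalidate the \emph{claimed} bound (an upper bound), and verifying that the threshold $|\alpha|\ge \rho$ required by Lemma \ref{Bernstein} is compatible with the range $r_1\in[0,\mez]$; the latter is automatic because $\mez+r_1\le 1\le|\alpha|$ whenever $|\alpha|\ge 1$. No deeper difficulty is expected, and the proof reduces to a careful bookkeeping of the $\th$-powers coming from the frequency localization and from Bernstein's inequality.
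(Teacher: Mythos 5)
Your argument is correct and is essentially what the paper has in mind (the paper leaves this as an immediate consequence of Lemma \ref{Bernstein} and the localization $|\xi|\sim\th^{-2}$ on $\supp\tph(h\xi)$): for $|\alpha|\ge1$ the $x$-derivatives land only on $S_{(j-3)\delta}(\gamma)$, Lemma \ref{Bernstein} with $\rho=\tfrac12+r_1$ gives the factor $\th^{-2\delta(|\alpha|-(\frac12+r_1))}$, and the Leibniz distribution of the $\xi$-derivatives between $S_{(j-3)\delta}(\gamma)$ (each worth $\th^{2}$ on the support) and $\tph(h\xi)$ (each worth $h=\th^{2}$) yields the stated powers. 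No gap; this matches the paper's approach.
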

We now study the regularity of the symbol $p_h^0$.
\begin{prop}\label{est:p0}
Choosing $r_0,~r_1$ satisfying
\bq\label{r0r1}
2\delta(1-r_0)\le 1,~2\delta(2-r_0)\le 2,~2\delta(\mez-r_1)\le 1,~2\delta(\tdm-r_1)\le 2
\eq
then the symbol $p_h^0$ verifies \\
$(i)$ for all $(\alpha, \beta)\in \xN^d,~|\alpha|\le 2$
\bq\label{p0:C2}
\la \partial_z^\alpha \partial_\xi^\beta p_h^0(z, \xi)\ra\les_{\alpha,\beta} \th^{-1+|\beta|}\mathbbm{1}_{\th |\xi|\sim 1}(\xi),
\eq
$(ii)$ for all $(\alpha, \beta)\in \xN^d,~|\alpha|\ge 3$
\bq\label{p0:high}
\la \partial_z^\alpha \partial_\xi^\beta p_h^0(z, \xi)\ra\les_{\alpha,\beta} \th^{-1-(2\delta-1)(|\alpha|-2)+|\beta|}\mathbbm{1}_{\th |\xi|\sim 1}(\xi). 
\eq
\end{prop}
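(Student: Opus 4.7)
My plan is to view $p_h^0$ as a composition $p_h^0(z,\xi) = q_h(\Phi(z), \Psi(z,\xi))$ where $\Phi(z) = X(\th z)$ and $\Psi(z,\xi) = M_0(\th z)\th^{-1}\xi$, and apply the chain/Faà di Bruno rule, controlling each factor via Lemma \ref{est:qh}, Proposition \ref{est:X}, and the $M_0$-analogue of Lemma \ref{est:MJ} that follows from writing $M_0 = ({}^t\partial_x X)^{-1}$. The first observation I would record is the support constraint: on $\supp \tph(h\cdot)$ composed with $\Psi$ one has $\tph(\th M_0(\th z)\xi) \neq 0$, hence $|\Psi|\sim h^{-1}$; combined with the uniform boundedness of $M_0^{\pm 1}$ (from Proposition \ref{est:X}), this forces $\th|\xi|\sim 1$, which is precisely the indicator in \eqref{p0:C2}--\eqref{p0:high}. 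A crucial consequence is that a single $z$-derivative of $\Psi$ has $L^\infty$-norm $\sim 1$, not $\sim \th^{-1}$, because the chain-rule factor $\th$ cancels the $\th^{-1}$ in $\Psi$ while $|\xi|\sim \th^{-1}$; symmetrically $\partial_z\Phi = O(\th)$ and $\partial_\xi\Psi = O(\th^{-1})$.

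For part $(i)$, I would enumerate the five (up to index labels) Faà di Bruno terms arising in $\partial_z^\alpha\partial_\xi^\beta p_h^0$ with $|\alpha|\le 2$, each of the form $(\partial_x^k\partial_{\xi'}^\ell q_h)(\Phi,\Psi)$ times a product of derivatives of $\Phi,\Psi$. Using Lemma \ref{est:qh} to bound $\partial_x^k\partial_{\xi'}^\ell q_h$ by $\th^{-1-2\delta(k-(\mez+r_1))_+ + 2\ell}$ and using Proposition \ref{est:X} together with the scaling $\partial_z = \th\partial_x$ to bound $\partial_z^m \Phi$ and $\partial_z^m \Psi$, each of the five terms is dominated by $\th^{-1+|\beta|}$ exactly under one of the four inequalities in \eqref{r0r1}: the cases $(k,\ell) = (1,0)$ and $(2,0)$ require the $r_1$-inequalities, the cases where $\partial_z^2\Phi$ or $\partial_z^2\Psi$ appear require the $r_0$-inequalities, and the mixed term carrying both $\partial_x q_h$ and $\partial_x^2 X$ is controlled by the sum of the $2\delta(1-r_0)\le 1$ and $2\delta(\mez-r_1)\le 1$ bounds.

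For part $(ii)$, I would argue by induction on $|\alpha|$: adding one more $z$-derivative to any of the terms above either (a) increases by one the number of $x$-derivatives falling on $q_h$ past the threshold $\mez+r_1$, which by Lemma \ref{Bernstein} costs at most $\th^{-2\delta}$ after absorbing the extra $\th$ from $\partial_z = \th\partial_x$, net $\th^{1-2\delta}$; or (b) increases by one the order of a derivative of $X$ or $M_0$ past $1+r_0$, which by Proposition \ref{est:X} costs $\th^{-2\delta}$ per extra derivative, again netting $\th^{1-2\delta}$ after the chain-rule factor. Iterating $(|\alpha|-2)$ times yields the extra factor $\th^{-(2\delta-1)(|\alpha|-2)}$ of \eqref{p0:high}. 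The main obstacle is the bookkeeping in this last step: one must verify that in every Faà di Bruno pattern the combined cost accumulates linearly (rather than super-linearly) in $|\alpha|-2$, which is precisely the content of the constraint $\sum |K_i|(|L_i|-1) + (|\beta|-1) = |\alpha|-1$ implicit in the expansion formula \eqref{rec1}, so that the total number of "excess" derivatives charged at the rate $\th^{-2\delta}$ is bounded by $|\alpha|-2$ and is exactly balanced by the $\th^{|\alpha|}$ from the rescaling.
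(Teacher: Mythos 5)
Your approach is essentially the paper's: expand the composition $q_h(X(\th z), M_0(\th z)\th^{-1}\xi)$ by Fa\`a di Bruno, bound the outer derivatives of $q_h$ via Lemma \ref{est:qh}, bound the inner maps via Proposition \ref{est:X} and Lemma \ref{est:MJ}, and then carry out the exponent bookkeeping for $|\alpha|\ge 3$; the paper writes out $\partial_z p_h^0$ and $\partial_z^2 p_h^0$ explicitly and then runs Fa\`a di Bruno on the resulting five factors $A_1,\dots,A_5$ and four factors $B_1,\dots,B_4$, which is the same computation you describe in inductive form. Your identification of which inequality in \eqref{r0r1} controls which of the five patterns is also correct.

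One intermediate claim is wrong, however, and is worth fixing since it would give a strictly better (and false) estimate if taken literally. You assert that $\partial_z\Psi$ has $L^\infty$-norm $\sim 1$. In fact $\partial_z\Psi = \th\,(\partial_y M_0)(\th z)\,\th^{-1}\xi = (\partial_y M_0)(\th z)\,\xi$, and since $|\xi|\sim\th^{-1}$ on the support and $|\partial_y M_0|\lesssim\th^{-2\delta(1-r_0)}$, one only gets $|\partial_z\Psi|\lesssim\th^{-1-2\delta(1-r_0)}$, i.e. at best $O(\th^{-1})$ and in general $O(\th^{-2})$ under \eqref{r0r1}. The point the paper actually uses is that each $\xi$-derivative of $q_h$ gains $\th^2$ (Lemma \ref{est:qh}), so the pairing $q_\xi\cdot\partial_z\Psi = O(\th)\cdot O(\th^{-1-2\delta(1-r_0)}) = O(\th^{-2\delta(1-r_0)})\le O(\th^{-1})$ only closes up after the inner and outer losses are combined; there is no standalone $O(1)$ bound on $\partial_z\Psi$. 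If your heuristic were correct you would get $|\partial_z p_h^0|\lesssim\th$, contradicting \eqref{p0:C2}. Since the rest of your proposal tracks the $r_0,r_1$ dependencies correctly term by term, this misstatement does not appear to propagate, but it should be removed or corrected.
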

\begin{proof}
 To simplify notations, we shall denote in this proof $q\equiv q_h$.\\
$(i)$ Observe that \eqref{p0:C2} is trivial when $\alpha=0$. The argument below is independent of the dimension, however let us further simplify the notations by writing as if $d=1$. For $|\alpha|=1$, we compute
\bq\label{dxp}
\begin{aligned}
\partial_z^\alpha p^0(z, \xi)&=q_x\big(X(\th z), M_0(\th z)\th ^{-1}\xi\big)\th X'(\th z)\\
& \quad+q_\xi\big(X(\th z), M_0(\th z)\th ^{-1}\xi\big)(\th^{-1}\xi)\th M'_0(\th z).
\end{aligned}
\eq
 For $|\alpha|=2$, we have 
\bq\label{d2xp} 
\begin{aligned}
\partial_z^\alpha p_h^0(z, \xi)&=q_{xx}(\cdots)\th^2(X')^2+2q_{x\xi}(\cdots)X'M'_0\th \xi+q_{\xi\xi}(\cdots)(M_0')^2\xi^2,\\
&\quad +q_x(\cdots)\th^2X''+q_\xi(\cdots)(\th \xi)M_0''.
\end{aligned}
\eq
Remark that on the support of $p^0(z, \xi)$,~$\th |\xi|\sim 1$, using Proposition \ref{est:X}, Lemmas \ref{est:MJ} and \ref{est:qh} one deduces easily that 
\begin{align*}
&\la \partial^\alpha_z p^0(z, \xi)\ra \les_{\alpha}  \th^{-1-2\delta(\mez-r_1)+1}+\th^{-1+2-1-2\delta(1-r_0)},\quad|\alpha|=1,\\
&\la \partial^\alpha_z p^0(z, \xi)\ra \les_{\alpha} \th^{-1-2\delta(\tdm-r_1)+2}+\th^{-1-2\delta(\mez-r_1)+2-2\delta(1-r_0)}+\th^{-1+4-4\delta(1-r_0)-2}\\
&\quad +\th^{-1-2\delta(\mez-r_1)+2-2\delta(1-r_0)}+\th^{-1+2-2\delta(2-r_0)},\quad|\alpha|=2.
\end{align*}
Under conditions \eqref{r0r1}, we get
\[
\la \partial^\alpha_z p^0(z, \xi)\ra \les_{\alpha} \th^{-1}\mathbbm{1}_{\th |\xi|\sim 1}(\xi),\quad|\alpha|\le 2. 
\]
To obtain  $(i)$ it remains to estimate $\partial_\xi^\beta(\partial_z^\alpha p_0^h)$ for $|\alpha|\le 2$ and $\beta\in \xN^d$. From the explicit expressions \eqref{dxp}, \eqref{d2xp} of $\partial_z^\alpha p^0_h$, we see that there are two possibilities when differentiating once in $\xi$. One possibility is that the derivative falls down to $q$. This makes  appear the factor $M_0(\th z)\th^{-1}$ while we gain $\th^2$ when differentiating $q$ in $\xi$ (by Lemma \ref{est:qh}), we thus gain $\th$. Another possibility is that the derivative falls down to $\xi^\nu,~\nu=1, 2$, which  results in $\nu\xi^{\nu-1}$. Since $\xi\sim \th^{-1}$ on the support of $p^0_h$ one deduces that $\xi^{\nu-1}\sim \xi^\nu \th$, which means that we still gain  $\th$. Therefore, in both cases we gain $\th$ when differentiating once in $\xi$ and thus \eqref{p0:C2} follows.\\
$(ii)$ As just explained above, it suffices to prove \eqref{p0:high} with $\beta=0$. From the formula  \eqref{d2xp}, the proof of \eqref{p0:high} reduces to showing for $|\alpha|\ge 0$
\[
\la \partial_z^\alpha A_j(z, \xi, h)\ra\les_{\alpha,\beta} \th^{-1-(2\delta-1)|\alpha|}\mathbbm{1}_{\th |\xi|\sim 1}(\xi),~j=\overline{1,5}
\]
with 
\[
\left\{
\begin{aligned}
&A_1=q_{x\xi}\big(X(\th z), M_0(\th z)\th ^{-1}\xi\big)\th^{-1},\\
&A_2=q_{xx}\big(X(\th z), M_0(\th z)\th ^{-1}\xi\big)\th^2,\\
&A_3=q_{\xi\xi}\big(X(\th z), M_0(\th z)\th ^{-1}\xi\big)\th^{-4},\\
&A_4=q_x\big(X(\th z), M_0(\th z)\th ^{-1}\xi\big)\th,\\
&A_5=q_{\xi}\big(X(\th z), M_0(\th z)\th ^{-1}\xi\big)\th^{-2}
\end{aligned}
\right.
\]
and 
\[
\la \partial_z^\alpha B_j(z, h)\ra\les_{\alpha,\beta} \th^{-(2\delta-1)|\alpha|},~j=\overline{1,4}
\]
with
\[
\left\{
\begin{aligned}
&B_1=X'(\th z),\\
&B_2=\th M'_0(\th z),\\
&B_3=\th X''(\th z),\\
&B_4=\th^2M''(\th z).
\end{aligned}
\right.
\]
1. $B_j$. By Lemma \ref{X1},
\[
\vert \partial^\alpha_zB_1\vert=\vert \partial^\alpha_zX'(\th z)\vert =\th^{|\alpha|}\vert (\partial^{\alpha+1}_xX)(\th z)\vert\les \th^{|\alpha|-2\delta|\alpha|}\les\th^{-1-(2\delta-1)|\alpha|}.
\]
On the other hand, \eqref{X2} and the condition $2\delta(1-r_0)\le 1$ imply
\[
\vert \partial^\alpha_zB_3\vert=\th \vert \partial^\alpha_zX''(\th z)\vert =\th^{1+|\alpha|}\vert (\partial^{\alpha+2}_xX)(\th z)\vert\les \th^{1+|\alpha|-2\delta(|\alpha|+1-r_0)}\les\th^{-(2\delta-1)|\alpha|}.
\]
Remark that $M_0'(\th z)$ is as smooth as $X''(\th z)$, the preceding estimate also holds for $B_2$. For $B_4$, we  apply \eqref{X2} and use the condition $2\delta(2-r_0)\le 2$ to estimate
\[
\vert \partial^\alpha_zB_4\vert=\th^2 \vert \partial^\alpha_zM_0''(\th z)\vert =\th^{2+|\alpha|}\vert (\partial^{\alpha+2}_xM_0)(\th z)\vert\les \th^{|\alpha|+2-2\delta(|\alpha|+2-r_0)}\les\th^{-(2\delta-1)|\alpha|}.
\]
2. $A_1$. For $\alpha=0$, Lemma \ref{est:qh} gives
\[
|A_1|\les \th^{-1-2\delta(\mez-r_1)+2-1}\les \th^{-1}
\]
since $2\delta(\mez-r_1)\le 1$. Considering now $|\alpha|\ge 1$,  using the Faa-di-Bruno formula we see that $\partial_z^\alpha A_1$ is a linear combination of terms of the form
\[
C_1=\th^{|\alpha|-1}\big(\partial_x^{a+1}\partial_\xi^{b+1}q\big)(\cdots)\prod_{j=1}^r\big((\partial_x^{L_j}X)(\th z)\big)^{P_j}\big((\partial^{L_j}_xM_0)(\th z)\th^{-1}\xi\big)^{Q_j}
\]
where $1\le |a|+|b|\le |\alpha|$, $|L_j|\ge 1$ $\forall j=\overline{1,r}$ and 
\[
\sum_{j=1}^r P_j=a,~\sum_{j=1}^rQ_j=b,~\sum_{j=1}^r(|P_j|+|Q_j|)L_j=\alpha.
\]
According to Lemma \ref{est:qh},
\[
\vert \big(\partial_x^{a+1}\partial_\xi^{b+1}q\big)(\cdots)\vert\les \th^{-1-2\delta(|a|+\mez-r_1)+2(|b|+1)}.
\]
On the other hand, since $|L_j|\ge 1$ Lemmas \ref{est:qh}, \ref{est:MJ} allow us to estimate the product appearing in $C_1$ as follows
\begin{align*}
\vert \prod_{j=1}^r\vert\les \th^K,~K&= \sum_{j=1}^r\Big(-2\delta(|L_j|-1)|P_j|-2\delta(|L_j|-r_0)|P_j|\Big)-2\sum_{j=1}^r|Q_j|\\
&=-2\delta|\alpha|+2\delta |a|+2\delta r_0|b|-2|b|.
\end{align*}
Therefore, $|C_1|\les \th^L$ with 
\begin{align*}
L&=|\alpha|-1-1-2\delta(|a|+\mez-r_1)+2(|b|+1)-2\delta|\alpha|+2\delta |a|+2\delta r_0|b|-2|b|\\
&\ge -1-(2\delta-1)|\alpha|+1-2\delta(\mez-r_1) 
\\
& \ge-1-(2\delta-1)|\alpha|,
\end{align*}
where we have used again the condition that $2\delta(\mez-r_1) \le 1$. The proof for $A_1$ is complete.\\
3. $A_2,~A_3,~A_4,~A_5$. The estimate  for these terms can be derived along the same lines as for $A_3$, where one need to make use of the condition $2\delta (\tdm-r_1)\le 2$ for $A_2$ and the condition $2\delta (\mez-r_1)\le 1$ for $A_4$.
\end{proof}

From now on, we always assume condition \eqref{r0r1} for $r_0$ and $r_1$.
\subsubsection{The symbol $r_h$}
The next lemma provides the order of $r_h$ and shows that it decays in $\xi$ faster than in $(z, z')$ which shall be important in our \enquote{pseudo-dispersive estimates} in paragraph \ref{p:pde}.
\begin{lemm}\label{est:rh}
For all $(\alpha, \alpha', \xi)\in (\xN^d)^3$, we have
\[
\la \partial_z^\alpha \partial_{z'}^{\alpha'} \partial_\xi^\beta r_h(z, z', \xi)\ra \les_{\alpha, \alpha',\beta} \th^{1-2\delta(1-r_0)-(2\delta-1)(|\alpha|+|\alpha'|)+|\beta|}\mathbbm{1}_{\{\th |\xi|\sim 1\}}(\xi).
\]
Consequently, $r_h\in S^{-1+2\delta(1-r_0)}_{1, (2\delta-1),(2\delta-1)}$.
\end{lemm}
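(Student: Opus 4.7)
The plan is to treat $r_h$ as a $\partial_\xi\partial_{z'}$-derivative of essentially the same composite that defined $p_h^0$, and to rerun the Faa-di-Bruno and Leibniz bookkeeping of Proposition \ref{est:p0}, this time carefully tracking both the additional $\xi$-derivative and the fact that $z'$ enters only through the second slots of $M$ and $J$ (while $X(\th z)$ is $z'$-independent). Concretely, starting from the representation
\[
r_h(z,z',\xi)=\int_0^1\partial_\xi\partial_{z'}\Big[q_h\big(X(\th z),M(\th z,\th[z,z']_s)\,\th^{-1}\xi\big)\,J\big(\th z,\th[z,z']_s\big)\Big]\,ds,
\]
I would expand $\partial_\xi\partial_{z'}$ by chain and product rules and estimate each resulting term using Lemma \ref{est:qh} for $q_h$, Proposition \ref{est:X} for derivatives of $X$, and Lemma \ref{est:MJ} for derivatives of $M$ and $J$.

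First I would establish the zero-order bound $|r_h|\les\th^{1-2\delta(1-r_0)}$ on the support $\th|\xi|\sim 1$. The derivative $\partial_\xi$ reaches $q_h$ only through its second argument $M\th^{-1}\xi$, producing the factor $M\th^{-1}=O(\th^{-1})$ together with $\partial_\xi q_h=O(\th)$ by Lemma \ref{est:qh}; the net effect is a gain of one power of $\th$ compared with $q_h$ itself, exactly as used implicitly in Proposition \ref{est:p0}(i). The derivative $\partial_{z'}$ sees $M$ and $J$ only through $\th[z,z']_s$, so the chain rule pulls out the factor $s\th$ together with a derivative in the second slot, which by Lemma \ref{est:MJ} is bounded by $\th^{-2\delta(1-r_0)}$. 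Multiplying gives $\th^{-1}\cdot\th\cdot\th\cdot\th^{-2\delta(1-r_0)}=\th^{1-2\delta(1-r_0)}$, matching the claim at $\alpha=\alpha'=\beta=0$.

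For the higher derivatives the strategy is identical to the second part of the proof of Proposition \ref{est:p0}. Each additional $\partial_z$ or $\partial_{z'}$ generates a Faa-di-Bruno expansion whose worst term costs $\th^{-(2\delta-1)}$: this is exactly the mechanism verified for the terms $A_1,\ldots,A_5$ and $B_1,\ldots,B_4$ there, now applied to the differentiated integrand. Each additional $\partial_\xi$ again gains $\th$: either it lands on $q_h$ (giving $M\th^{-1}$ times one more $\partial_\xi q_h$, net $\th$), or it lands on a factor $\xi$ previously pulled out by a $\partial_z/\partial_{z'}$, in which case $|\xi|\sim \th^{-1}$ on the support provides the gain. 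Combining everything yields the pointwise bound
\[
|\partial_z^\alpha\partial_{z'}^{\alpha'}\partial_\xi^\beta r_h(z,z',\xi)|\les\th^{\,1-2\delta(1-r_0)-(2\delta-1)(|\alpha|+|\alpha'|)+|\beta|}\,\mathbbm{1}_{\{\th|\xi|\sim 1\}}(\xi),
\]
and substituting $\th\sim|\xi|^{-1}$ on the support converts this into the symbol-class statement $r_h\in S^{-1+2\delta(1-r_0)}_{1,(2\delta-1),(2\delta-1)}$.

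The main obstacle I anticipate is the combinatorial bookkeeping of Faa-di-Bruno: one has to check that, among all the terms produced by repeated $z,z'$-differentiation, the worst is still controlled by $\th^{-(2\delta-1)(|\alpha|+|\alpha'|)}$ at fixed $\beta$. As in Proposition \ref{est:p0} this relies precisely on the four inequalities \eqref{r0r1}, which are tuned so that the first two space derivatives of $X$, $M$ and the corresponding $x$-derivatives of $q_h$ fit within the $2\delta-1$ budget; once these anchor estimates are in place, the induction on $|\alpha|+|\alpha'|$ proceeds verbatim, and the extra factor $\th^{1-2\delta(1-r_0)}$ present throughout comes from the single $\partial_{z'}$ that distinguishes $r_h$ from $p_h^0$.
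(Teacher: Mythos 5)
Your proposal is correct and follows essentially the same route as the paper: a chain-rule/Faa-di-Bruno computation tracking powers of $\th$, anchored on Lemma \ref{est:qh} for $q_h$, Proposition \ref{est:X} for $X$, Lemma \ref{est:MJ} for $M,J$, and the constraints \eqref{r0r1}. The paper organizes the bookkeeping slightly differently by first rewriting $q_h\big(X(\th z), M\th^{-1}\xi\big) = \th^{-1}\widetilde q_h\big(X(\th z), M\th\xi\big)$ (using the degree-$\mez$ homogeneity of $\gamma$ and $h=\th^2$), so that $\widetilde q_h$ is estimated at unit frequency scale and all $\th$-powers come from the prefactor, the chain rule, and Lemma \ref{est:MJ}; your direct computation tracks the same powers via Lemma \ref{est:qh} explicitly and arrives at the identical exponent.
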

\begin{proof}
Recall the definition \eqref{def:rh} of $r_h$. On the support of this symbol, $|\xi|\sim \th^{-1}$. In this proof, all the estimates are uniform in $s\in [0, 1]$. It follows from Lemma \ref{est:MJ} that 
\[
\forall (\alpha, \alpha')\in (\xN^d)^2, \quad\la \partial_z^\alpha \partial_{z'}^{\alpha'} J\big(\th z,\th [z,z']_s\big)\ra \les_{\alpha, \alpha'} \th^{-(2\delta-1)(|\alpha|+|\alpha'|)}.
\]
Next, setting
\[
\widetilde q_h(x, \xi)=S_{{(j-3)}\delta}(\gamma)(x, \xi)\tph(\xi)
\]
we see that 
\[
q_h\big(X(\th z), M\big(\th z,\th [z,z']_s\big)\th ^{-1}\xi\big)
=\th^{-1}\widetilde q_h\big(X(\th z), M\big(\th z,\th [z,z']_s\big)\th \xi\big).
\]
The proof of this lemma then boils down to showing for all $(\alpha, \alpha', \beta)\in (\xN^d)^3$, 
\bq\label{est:r:1}
\la  \partial_\xi^\beta \partial_z^\alpha \partial_{z'}^{\alpha'}\partial_\xi \partial_{z'}\widetilde q_h(X( z), M( z, [z,z']_s)\xi) \ra 
\les_{\alpha, \alpha',\beta} \th^{-2\delta(|\alpha|+|\alpha'|)-2\delta(1-r_0)}.
\eq
We compute 
\[
\Xi:=\partial_\xi \partial_{z'}\widetilde q(X(z), M( z, [z,z']_s)\xi)=s\widetilde q_{\xi\xi}(\cdots) M_{z'}\xi M+s\widetilde q_\xi(\cdots) M_{z'}
\]
which is bounded by $\th^{-2\delta(1-r_0)}$ in view of Lemma \ref{est:MJ} and the fact that $|\xi|\sim 1$ on the support of $\widetilde q$. For the same reason we see that taking $\xi$-derivatives of $\Xi$ is harmless (notice that $M$ is bounded), so we only need to prove \eqref{est:r:1} for $|\beta|=0$. Indeed,  by Lemma \ref{est:MJ},
\bq\label{est:MM'}
\la  \lp\partial_z^\alpha \partial_{z'}^{\alpha'}M\rp (\cdot)\ra+\la  \lp\partial_z^\alpha \partial_{z'}^{\alpha'}M_{z'}\rp (\cdot)\ra \les \th^{-2\delta(|\alpha|+|\alpha'|)-2\delta(1-r_0)}
\eq
 On the other hand, using the Faa-di-Bruno formula (as in the proof of Proposition \ref{est:p0}) we can prove that
\[
\la \partial_z^\alpha \partial_{z'}^{\alpha'}(\partial^\gamma_\xi \widetilde q)\big(X(z), M( z, [z,z']_s)\xi\big)\ra\les \th^{-2\delta(|\alpha|+|\alpha'|)},
\]
from which we conclude the proof.
\end{proof}
In view of equation \eqref{eq:order1} we have proved  that 
\bq\label{eq:wh}
\lp\mathcal{L}_ju_j\rp(t, X(t, \th z)=\lp\partial_t +i\Op(p^0_h)\rp w_h(t, z)-i\Op(r_h)w_h(t,z)
\eq
via the relation $w_h(t, z)=u_j(t, X(t, \th z))$.\\

\subsection{A \enquote{pseudo dispersive estimate} for $\mathcal{L}_j$}\label{p:pde}
In this step, we shall show that Theorem \ref{theo:KT} can be applied to the evolution operator
\[
L_h:=D_t+\Op^w(p^0_h).
\]
Henceforth, we set
\[
\delta=\frac{3}{4},\quad \ld=\th^{-1}.
\]
Proposition \ref{est:p0} shows that $p^0_h$ belongs to $\ld S^2_\ld$. Using Lemma \ref{toW:improve} to replace $\Op(p_h^0)$ in \eqref{eq:wh} with $\Op^w(p_h)$ we have
\bq\label{p:pw}
\Op^w(p_h^0)=\Op(p_h^0)+\Op(r'_h)
\eq
with $r'_h\in S^0_{1, \mez, \mez}$. On the other hand, since $2\delta(1-r_0)\le 1$, Lemma \ref{est:rh} combining with \eqref{eq:wh}, \eqref{p:pw} leads to the following.
\begin{prop} There holds for some symbol $r^1_h\in S^0_{1, \mez, \mez}$ that
\bq\label{eq:whw}
\frac{1}{i}\lp\mathcal{L}_ju_j\rp(t, X(t, \th z)=\lp D_t+\Op^w(p^0_h)\rp w_h(t, z)+\Op(r^1_h) w_h(t,z).
\eq
\end{prop}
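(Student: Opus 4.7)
The plan is simply to assemble the result from the three identities already established in the previous paragraphs. Starting from \eqref{eq:wh},
\[
(\mathcal{L}_j u_j)(t, X(t,\th z)) \;=\; (\partial_t + i\Op(p_h^0))w_h(t,z) \;-\; i\Op(r_h)w_h(t,z),
\]
I would first use \eqref{p:pw} to swap the usual quantization $\Op(p_h^0)$ for the Weyl quantization $\Op^w(p_h^0)$, picking up the error symbol $r_h' \in S^0_{1,\mez,\mez}$. This gives
\[
(\mathcal{L}_j u_j)(t, X(t,\th z)) \;=\; \bigl(\partial_t + i\Op^w(p_h^0)\bigr)w_h \;-\; i\Op(r_h')w_h \;-\; i\Op(r_h)w_h.
\]
Dividing through by $i$ (and interpreting $D_t = \frac{1}{i}\partial_t$ as usual) then produces the desired identity with
\[
r^1_h \;:=\; -\bigl(r_h' + r_h\bigr).
\]

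The only substantive point to verify is that $r^1_h$ indeed lies in $S^0_{1,\mez,\mez}$. For $r_h'$ this is given directly by Lemma \ref{toW:improve} applied to $p_h^0 \in \ld S^2_\ld$ (Proposition \ref{est:p0}). For $r_h$, I would invoke Lemma \ref{est:rh}, which puts $r_h$ in $S^{-1+2\delta(1-r_0)}_{1,(2\delta-1),(2\delta-1)}$. Inserting the choice $\delta = \tfrac{3}{4}$ made at the beginning of Section~\ref{p:pde} gives $(2\delta-1) = \mez$, matching the spatial regularity indices of $r_h'$, and the constraint $2\delta(1-r_0) \le 1$ in \eqref{r0r1} forces the order $-1 + 2\delta(1-r_0)$ to be at most $0$. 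Hence $r_h \in S^0_{1,\mez,\mez}$ and the sum $r_h + r_h'$ remains in this class.

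Since there are no estimates left to work out beyond this bookkeeping, I do not anticipate any obstacle: the result is a consolidation step packaging the conjugated equation \eqref{eq:wh} in a form suitable for applying the dispersive estimate of Koch--Tataru to the Weyl-quantized principal symbol $p_h^0$, with all lower-order contributions absorbed into the single remainder $\Op(r^1_h)$.
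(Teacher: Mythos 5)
Your proof is correct and follows essentially the same assembly the paper has in mind: substitute the Weyl/classical relation \eqref{p:pw} into \eqref{eq:wh}, divide by $i$, collect the remainders into $r^1_h = -(r'_h + r_h)$, and verify the symbol class using Lemma \ref{toW:improve} for $r'_h$ and Lemma \ref{est:rh} together with $\delta = \tfrac{3}{4}$ and $2\delta(1-r_0)\le 1$ for $r_h$.
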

Next, we recall the following proposition which says that the  the characteristic set of $p^0_h$ has $d$ nonvanishing principal curvatures. 
\begin{prop}[\protect{\cite[Proposition 4.16]{ABZ4}}]\label{Hessp0}
Let $\Cr$ be a fixed annulus in $\xR^d$. For any $0<\delta<1$ there exist $m_0>0,~h_0>0$   such that
\[
		      	\sup_{(t, x, \xi, h)\in I\times \xR^d\times (\Cr\times(0, h_0]}\la\det\partial_\xi^2S_{\delta j}(\gamma)(t,x,\xi)\ra\geq m_0.	
\]
\end{prop}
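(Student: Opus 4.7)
The plan is to deduce this from the analogous (non-regularized) statement for $\gamma$ itself, which is already available in the excerpt as \eqref{Hessgamma}. The key point is that $S_{\delta j}(\gamma)$ only regularizes in the $x$-variable, so $\partial_\xi^2$ commutes with the smoothing:
\[
\partial_\xi^2 S_{\delta j}(\gamma)(t,x,\xi) = S_{\delta j}\bigl(\partial_\xi^2 \gamma(t,\cdot,\xi)\bigr)(x).
\]
Since $\gamma = \bigl((|\xi|^2(1+|\nabla\eta|^2)-(\nabla\eta\cdot\xi)^2)/(1+|\nabla\eta|^2)\bigr)^{3/4}$ is $C^\infty$ in $\xi\neq 0$ and has coefficients that are Hölder-continuous in $(t,x)$ (by Sobolev embedding, since $\nabla\eta\in C([0,T];C^{r}_*(\mathbb R^d))$ for some $r>0$ under our regularity assumption on $(\eta,\psi)$), the map $(t,x,\xi)\mapsto \partial_\xi^2\gamma(t,x,\xi)$ is uniformly continuous on $I\times\mathbb R^d\times\mathcal C$ and bounded.

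The first step is therefore to invoke \eqref{Hessgamma} to pick, for any $\varepsilon>0$, a point $(t_\ast,x_\ast,\xi_\ast)\in I\times\mathbb R^d\times\mathcal C$ with $|\det\partial_\xi^2\gamma(t_\ast,x_\ast,\xi_\ast)|\ge c_0-\varepsilon$. The second step is a standard Littlewood--Paley approximation argument: because $\psi(2^{-j\delta}D_x)$ is a convolution with $2^{j\delta d}\check\psi(2^{j\delta}\cdot)$ whose symbol tends pointwise to $1$, and $\partial_\xi^2\gamma$ has a uniform modulus of continuity in $x$, one has
\[
\sup_{(t,x,\xi)\in I\times\mathbb R^d\times\mathcal C}\bigl|S_{\delta j}(\partial_\xi^2\gamma)(t,x,\xi)-\partial_\xi^2\gamma(t,x,\xi)\bigr|\xrightarrow[j\to\infty]{}0,
\]
with a quantitative rate $O(2^{-\delta j r})$ coming from the Hölder regularity (this is exactly the type of estimate recorded in Proposition~4.5 of \cite{ABZ4}). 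The third step is then to combine these: at the fixed point $(t_\ast,x_\ast,\xi_\ast)$, continuity of the determinant (as a polynomial in the entries of a $d\times d$ matrix, hence locally Lipschitz on bounded sets) yields
\[
\bigl|\det\partial_\xi^2 S_{\delta j}(\gamma)(t_\ast,x_\ast,\xi_\ast)-\det\partial_\xi^2\gamma(t_\ast,x_\ast,\xi_\ast)\bigr|\xrightarrow[j\to\infty]{}0.
\]
Choosing $h_0=2^{-j_0}$ with $j_0$ large enough that the error is $\le c_0/4$, and setting $m_0:=c_0/2$, gives the claimed lower bound for every $h\le h_0$.

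The main (and really the only) technical point is the uniform-in-$(t,x,\xi)$ convergence $S_{\delta j}(\partial_\xi^2\gamma)\to\partial_\xi^2\gamma$. Uniformity in $\xi\in\mathcal C$ is automatic since $\mathcal C$ is a fixed compact annulus and $\partial_\xi^2\gamma$ depends smoothly on $\xi$; uniformity in $t\in I$ follows from the continuity $\eta\in C([0,T];H^{s+\frac12})$ plus the fact that $I$ is compact; uniformity in $x$ is exactly the mollification property recalled above. Everything else is soft, so I do not expect any serious obstacle beyond quoting the right estimate from \cite{ABZ4}.
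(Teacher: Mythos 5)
The paper gives no proof of this statement — it is cited directly as \cite[Proposition 4.16]{ABZ4}. However, the proof strategy you propose (treat $S_{\delta j}(\gamma)$ as a small perturbation of $\gamma$, using the mollification estimate of \cite[Proposition 4.5]{ABZ4}) is exactly what the paper uses for the closely related unnumbered Proposition proving \eqref{Hessgamma} and \eqref{HessGamma}, so you have correctly identified the intended route.

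There is, however, one issue you should be aware of: the ``$\sup$'' appearing both in \eqref{Hessgamma} and in the present Proposition is almost certainly a typo for an infimum (or ``for every''). This is forced by the way the result is used: in Lemma \ref{check:a} and Proposition \ref{disp:w}, this statement is cited as verifying condition $({\bf A})$ of Proposition \ref{theo:KT}, which demands that $\left| \det\partial_\xi^2 p \right| \gtrsim \lambda^{-d}$ hold \emph{for each} $(\sigma,x,\xi)$ on the relevant set — a pointwise-everywhere lower bound, not a bound at one point. (Corollary 4.7 of \cite{ABZ4}, which is cited for \eqref{Hessgamma}, is likewise a uniform lower bound.) Your argument, as literally written, proves only the weak $\sup$ statement: in your first and third steps you fix a single point $(t_*,x_*,\xi_*)$ and conclude there. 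The uniform-in-$(t,x,\xi)$ convergence $S_{\delta j}(\partial_\xi^2\gamma)\to\partial_\xi^2\gamma$ that you establish in step two is precisely the estimate needed for the corrected (infimum) version; you should combine it with the everywhere lower bound $\left|\det\partial_\xi^2\gamma(t,x,\xi)\right|\geq c_0$ and the uniform local Lipschitz bound for $\det$ on bounded matrix sets, applied at \emph{every} $(t,x,\xi)$ rather than at a selected point. With that modification the argument is complete; the remaining ingredients (commuting $\partial_\xi^2$ with $\psi(2^{-j\delta}D_x)$, the H\"older-in-$x$ regularity of $\partial_\xi^2\gamma$, the rate $O(2^{-\delta j r})$) are all sound.
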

Let $\mathcal{S}_j$ and $S_h$ denote the propagator of $\mathcal{L}_j$ and $L_h$, respectively. We are now in position to apply Theorem \ref{theo:KT} to derive dispersive estimates for $S_h$. 
\begin{prop}\label{disp:w}  For any symbol $\chi\in S^0_\ld$ satisfying for all $z\in \xR^d$ $\supp\chi(z,\cdot)\subset \ld\Cr^2$, we have
 \bq\label{disp:w}
		\lA S_h(t, t_0)\big( \chi(z, D_z)f\big)\rA_{L^\infty}\les  \th^{-\frac{d}{2}}
		  \la t-t_0\ra^{-\frac{d}{2}}\lA f\rA_{L^1}
\eq
for all~$t,~ t_0\in [0, 1]$ and~$0<\th\leq \th_0$. \\
If in addition, $\chi(z, D_z):L^2\to L^2$ then for any $r\in [2, \infty]$ there holds by interpolation
\bq\label{disp:w:inter}
		\lA S_h(t, t_0)\big( \chi(z, D_z)f\big)\rA_{L^r}\les  \th^{-d(\mez-\frac{1}{r})}
		  \la t-t_0\ra^{-d(\mez-\frac{1}{r})}\lA f\rA_{L^{r'}},
\eq
where $r'$ is the conjugate exponent of $r$, {\it i.e.}, $\frac{1}{r}+\frac{1}{r'}=1$.
\end{prop}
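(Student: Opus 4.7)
The plan is to apply Theorem \ref{theo:KT} to the evolution operator $L_h = D_t + \Op^w(p_h^0)$ to obtain the $L^1\to L^\infty$ estimate \eqref{disp:w}, and then upgrade to \eqref{disp:w:inter} by Riesz--Thorin interpolation against the $L^2\to L^2$ boundedness of $S_h$. Two hypotheses of Theorem \ref{theo:KT} require verification: membership $p_h^0\in\ld S^2_\ld$, and the curvature condition $|\det\partial_\xi^2 p_h^0|\gtrsim\ld^{-d}$ on an annulus containing $\ld\Cr^2$.

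The first hypothesis follows from Proposition \ref{est:p0}: comparing the bounds $\th^{-1+|\beta|}$ for $|\alpha|\le 2$ and $\th^{-1-(2\delta-1)(|\alpha|-2)+|\beta|}$ for $|\alpha|\ge 3$ with Definition \ref{defi:symbolclass} (taking $k=2$, $m=1$, $\ld=\th^{-1}$, and noting $2\delta - 1 = \mez$ at $\delta = \tq$) identifies $p_h^0 \in \ld S^2_\ld$.

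For the curvature condition I would compute, via chain rule, $\partial_\xi^2 p_h^0(z,\xi) = \th^{-2}\, M_0(\th z)^T(\partial_\xi^2 q_h)(X(\th z), M_0(\th z)\th^{-1}\xi)\, M_0(\th z)$. Proposition \ref{est:X} yields $|\det M_0|\sim 1$, so the problem reduces to a lower bound on $|\det\partial_\xi^2 q_h|$ at the scaled frequency $\eta = M_0(\th z)\th^{-1}\xi$ with $|\eta|\sim\th^{-2}$. On this support $\tph(h\eta)\equiv 1$, hence $q_h$ coincides with $S_{(j-3)\delta}(\gamma)$, which is homogeneous of degree $\mez$ in $\eta$. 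Rescaling by homogeneity onto the unit sphere and invoking the non-degeneracy from Proposition \ref{Hessp0} yields $|\det\partial_\xi^2 q_h|\gtrsim \th^{3d}$, whence $|\det\partial_\xi^2 p_h^0|\gtrsim \th^{-2d}\cdot\th^{3d} = \ld^{-d}$. This homogeneous rescaling is the delicate step; the rest is bookkeeping with constants.

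With both hypotheses checked, Theorem \ref{theo:KT} applies directly with $\Cr'_\ld = \ld\Cr^2$ and $\Cr_\ld = \ld\Cr^3$ (the inequality $c_2 < c_3$ ensures $\supp\chi(z,\cdot)\subset\Cr'_\ld$), producing \eqref{disp:w}. For \eqref{disp:w:inter}, since $p_h^0$ is real, $\Op^w(p_h^0)$ is formally self-adjoint and $S_h(t,t_0)$ is an $L^2$-isometry; the hypothesis that $\chi(z, D_z)$ maps $L^2$ to $L^2$ then yields the $L^2\to L^2$ endpoint with constant $O(1)$. Interpolating this against the $L^1\to L^\infty$ bound via Riesz--Thorin with parameter $2/r$ gives the full $L^{r'}\to L^r$ family for $r\in[2,\infty]$.
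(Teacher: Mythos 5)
Your proposal is correct and follows essentially the same route as the paper: verify $p_h^0\in\ld S^2_\ld$ via Proposition \ref{est:p0}, verify the curvature condition $(\bf A)$ on $\ld\Cr^3$ via the chain rule, the near-identity bound on $M_0$ from Proposition \ref{est:X} (after shrinking $T$), and the nondegeneracy from Proposition \ref{Hessp0}, then apply Theorem \ref{theo:KT} with $c'=2c_2$, $c=2c_3$ and conclude the $L^{r'}\to L^r$ family by Riesz--Thorin against the $L^2$-isometry coming from self-adjointness of $\Op^w(p_h^0)$. You make explicit the homogeneity rescaling $|\det\partial_\xi^2 q_h|\gtrsim\th^{3d}$ and the factorization $\det\partial_\xi^2 p_h^0 = \th^{-2d}(\det M_0)^2\det\partial_\xi^2 q_h$ that the paper leaves implicit in the phrase \enquote{if $c_3<c_4$ is chosen appropriately}; this is the same computation spelled out in more detail.
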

\begin{proof}
We have seen that $p^0_h\in \ld S^2_\ld$. On the other hand,  $\tph\equiv 1$ in $\Cr^4$. Proposition \ref{Hessp0} then gives 
\[
		 	\sup_{(t, x, \xi, h)\in I\times \xR^d\times \Cr^4\times(0, h_0]}\la\det\lp \partial_\xi^2S_{\delta j}(\gamma)(t,x,\xi)\tph(\xi)\rp\ra\gs 1.	
\]
Remark that  \eqref{X1} implies $|M_0(y)|\ge c_0$ for all $y\in \xR^d$ (by choosing $T$ small enough as explained after Proposition \ref{est:X}). Consequently,
\[
		 	\sup_{(t, x, \xi, h)\in I\times \xR^d\times( \ld\Cr^3)\times(0, h_0]}\la\det\partial_\xi^2p_h^0\ra\gs \ld^{-d}	
\]
if $c_3<c_4$ is chosen appropriately. In other words, condition $(\bf A)$ in Theorem \ref{theo:KT} is fulfilled with $c=c_3$ and thus the Proposition follows.
\end{proof}
Let $\varphi_1$ be a smooth function verifying 
\[
\supp \varphi_1\subset \{ (2c_2)^{-1}\le |\xi|\le 2c_2\},\quad \varphi_1\equiv 1~\text{in}~\{ (2c_1)^{-1}\le |\xi|\le 2c_1\}.
\]
\begin{lemm} For $f(t, z)=g(t, X(t, \th z))$ we have
\bq\label{change:phi}
\lp\varphi_1(hD_x)g\rp (t, X(t, \th z))=\varphi_h^*(z, D_z)f(t, z)-i\Op(r_h^2)f(t, z),
\eq
with 
\begin{align}
& \varphi^*_h(z, \xi)= \varphi_1\big( M_0(\th z)\th \xi\big),\label{phi*}\\
&r^2_h(z, z', \xi)=\int_0^1\partial_\xi\partial_{z'}\varphi_1\big( M(\th z, \th[z, z']_s)\th \xi\big) J\big( \th z, \th[z, z']_s \big) ds. \label{rh2}
\end{align}
Moreover, for every $(\alpha, \alpha', \xi)\in (\xN^d)^3$ there hold
\begin{align}
&\la \partial_z^\alpha\partial_\xi^\beta  \varphi^*_h(z, \xi)\ra \les_{\alpha, \beta} \th^{-(2\delta-1)|\alpha|+|\beta|}1_{\{\th |\xi|\sim 1 \}},\label{est:phi*}\\
&\la \partial_z^\alpha\partial_{z'}^{\alpha'}\partial_\xi^\beta  r^2_h(z, z',\xi)\ra \les_{\alpha, \alpha',\beta} \th^{2-2\delta(1-r_0)-(2\delta-1)(|\alpha|+|\alpha'|)+|\beta|}1_{\{\th |\xi|\sim 1 \}}.\label{est:rh2}
\end{align}
\end{lemm}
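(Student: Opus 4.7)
The plan is to repeat verbatim the derivation of \eqref{A1} and the subsequent splitting $\Op(p_h)=\Op(p_h^0)-i\Op(r_h)$, with the cutoff symbol $\varphi_1(h\eta)$ playing the role that $q_h$ played there. First I write
\[
\varphi_1(hD_x)g(x) = (2\pi)^{-d} \iint e^{i(x-x')\cdot \eta} \varphi_1(h\eta) g(x')\,dx'\,d\eta,
\]
and successively substitute $x=X(t,\th z)$, $x'=X(t,\th z')$, $\eta=M(\th z,\th z')\zeta$, $\zeta=\th^{-1}\xi$. The combined Jacobian of the last two substitutions is exactly $J(\th z,\th z')$, as the product of $|\det \partial_x X(\th z')|$ and $|\det M(\th z,\th z')|$. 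This produces an oscillatory integral in $(z,z',\xi)$ with amplitude $\varphi_1(M(\th z,\th z')\th\xi)\,J(\th z,\th z')$ acting on $f$. Taylor-expanding this amplitude in $z'$ around $z'=z$, the diagonal term yields the symbol $\varphi_h^*(z,\xi)$ of \eqref{phi*}, since $M(\th z,\th z)=M_0(\th z)$ and $J(\th z,\th z)=1$; the first-order remainder is linear in $(z'-z)$ and, after integration by parts in $\xi$ using $(z'_k-z_k)e^{i(z-z')\cdot\xi}=-i\partial_{\xi_k}e^{i(z-z')\cdot\xi}$, produces $-i\Op(r_h^2)f$ with $r_h^2$ as in \eqref{rh2}.

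For \eqref{est:phi*}, the frequency localization $\th|\xi|\sim 1$ follows from $\supp\varphi_1$ being a fixed annulus together with the upper and lower bounds on $M_0$ coming from \eqref{X1}. Each $\partial_\xi$ acting on $\varphi_1(M_0(\th z)\th\xi)$ brings out a factor $M_0(\th z)\th$, hence contributes $\th$. For the $z$-derivatives I apply Faa-di-Bruno. The key estimate is $\lA \partial_y^\alpha M_0\rA_{L^\infty}\lesssim \th^{-2\delta(|\alpha|-r_0)}$ for $|\alpha|\ge 1$, which follows from Proposition \ref{est:X} (via $M_0=({}^t\partial X)^{-1}$) and is consistent with Lemma \ref{est:MJ}. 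The worst term in the Faa-di-Bruno expansion is then of order $\th^{|\alpha|}\cdot \th^{-2\delta(|\alpha|-r_0)}\cdot \th|\xi|$, giving $\th^{-(2\delta-1)|\alpha|+2\delta r_0}$, which is dominated by $\th^{-(2\delta-1)|\alpha|}$ under the standing assumption $2\delta(1-r_0)\le 1$ of \eqref{r0r1}.

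For \eqref{est:rh2} I work from the explicit formula \eqref{rh2}. The derivative $\partial_{z'}$ lands on either $M$ or $J$ (evaluated at $(\th z,\th[z,z']_s)$), each time bringing a factor $s\th$ together with $(\partial_{y'}M)$ or $(\partial_{y'}J)$ of size $\th^{-2\delta(1-r_0)}$ by Lemma \ref{est:MJ}, yielding a gain of $\th^{1-2\delta(1-r_0)}$. The subsequent $\partial_\xi$ contributes an additional $\th$ exactly as in the previous paragraph, producing the $\th^{2-2\delta(1-r_0)}$ leading order. Further $(z,z')$-derivatives are bounded by the same Faa-di-Bruno count as for $\varphi_h^*$, accounting for the $\th^{-(2\delta-1)(|\alpha|+|\alpha'|)}$ factor; extra $\xi$-derivatives give the $\th^{|\beta|}$ factor; the uniform bounds survive integration in $s\in[0,1]$.

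The main obstacle is the multilinear Faa-di-Bruno bookkeeping needed to ensure that no composition of derivatives of $M$ or $J$ evaluated at $(\th z,\th[z,z']_s)$ produces a worse power of $\th$ than the one claimed. This is entirely parallel to the term-by-term analysis carried out in the proof of Proposition \ref{est:p0}, and it is precisely at this step that the constraints \eqref{r0r1} on $r_0$ are used.
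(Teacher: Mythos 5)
Your proof is correct and takes essentially the same route as the paper: the formulas \eqref{change:phi}--\eqref{rh2} are obtained by repeating the change of variables from paragraphs \ref{Straighten}--\ref{Approximation} with $\varphi_1(h\eta)$ playing the role of $q_h$, and the estimates \eqref{est:phi*}, \eqref{est:rh2} follow from Lemma \ref{est:MJ} together with the Faa-di-Bruno bookkeeping already carried out in Proposition \ref{est:p0} and Lemma \ref{est:rh}. (A minor remark: the domination $\th^{-(2\delta-1)|\alpha|+2\delta r_0}\le\th^{-(2\delta-1)|\alpha|}$ in your argument for \eqref{est:phi*} needs only $r_0\ge 0$, not the full constraint $2\delta(1-r_0)\le 1$, though invoking the latter is harmless.)
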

\begin{proof}
The formulas \eqref{change:phi} and \eqref{phi*}, \eqref{rh2} are derived along the same lines as in paragraphs \ref{Straighten}, \ref{Approximation} where we performed the change of variables $x=X(t, \th z)$ to derive \eqref{eq:wh}.\\
1. Proof of \eqref{est:phi*}.\\
Observe first that 
\[
\partial_\xi^\beta  \varphi^*_h(z, \xi)=(\partial^\gamma \varphi_1)\big( M_0(\th z)\th \xi\big)\big(M_0(\th z)\big)^\gamma \th^{|\beta|}
\]
where $|\gamma|=|\beta|$. Next, Lemma \ref{est:MJ} implies for all $\alpha\in \xN^d$,
\[
\la \partial_z^\alpha (\partial^\gamma \varphi_1)\big( M_0(\th z)\th \xi\big)\ra+\la \partial_z^\alpha \big(M_0(\th z)\big)^\gamma\ra \les \th^{-(2\delta-1)|\alpha|}
\]
and thus \eqref{est:phi*} follows.\\
2. For \eqref{est:rh2} one proceeds exactly as in the proof of Lemma \ref{est:rh}.
\end{proof}
\begin{coro}\label{disp:w1}
If  $g$ is spectrally supported in the annulus $\frac{1}{h}\Cr^1$ then for all $r\in [2, \infty]$ we have
 \begin{multline}
		\lA S_h(t, t_0)\big(g\circ X(t_0,\th z)\big)\rA_{L^r}\les  \th^{-d(\frac{2}{r'}-\mez)}
		   \la t-t_0\ra^{-d(\mez-\frac{1}{r})}\lA g\rA_{L^{r'}}\\
+\lA S(t, t_0)\Op(r_h^2)\big(g\circ X(t_0,\th z)\big)\rA_{L^r}
\end{multline}
	for all~$t,~ t_0\in [0, 1]$ and~$0<\th\leq \th_0$.
\end{coro}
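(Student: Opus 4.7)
\textbf{Proof proposal for Corollary \ref{disp:w1}.}

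The starting point is the previous lemma applied at time $t=t_0$. Setting $f(z)\defn g\bigl(X(t_0,\widetilde h z)\bigr)$ and noting that, since $g$ is spectrally localized in $\frac{1}{h}\Cr^1$ and $\varphi_1\equiv 1$ on $\Cr^1$, one has $\varphi_1(hD_x)g=g$. Identity \eqref{change:phi} then reads
\[
g\circ X(t_0,\widetilde h z)=\varphi_h^*(z,D_z)f(z)-i\Op(r_h^2)f(z).
\]
Applying $S_h(t,t_0)$ to both sides, the second term is exactly the error term appearing on the right-hand side of the corollary, so it suffices to estimate $S_h(t,t_0)\bigl[\varphi_h^*(z,D_z)f\bigr]$ in $L^r$.

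For this I intend to apply the interpolated dispersive estimate \eqref{disp:w:inter} with $\chi=\varphi_h^*$. Two hypotheses must be checked: first, that $\varphi_h^*(z,\cdot)$ is supported in $\lambda\Cr^2$; this follows from \eqref{phi*}, the support condition on $\varphi_1$, and the bound $|M_0-\mathrm{Id}|\ll 1$ coming from \eqref{X1} (for $T$ small), after choosing $c_2$ slightly larger than the natural constant. Second, that $\varphi_h^*(z,D_z):L^2\to L^2$ uniformly in $h$; the estimate \eqref{est:phi*} with $\delta=\tfrac34$ reads $|\partial_z^\alpha\partial_\xi^\beta\varphi_h^*|\lesssim\widetilde h^{\,-|\alpha|/2+|\beta|}\mathbbm{1}_{\widetilde h|\xi|\sim 1}$, which on the support $|\xi|\sim\lambda=\widetilde h^{-1}$ is precisely the symbol class $S^0_{1,1/2}$ uniformly in $h$; the Calder\'on--Vaillancourt theorem gives the required $L^2$-boundedness. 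With these verified, \eqref{disp:w:inter} yields
\[
\lA S_h(t,t_0)\bigl[\varphi_h^*(z,D_z)f\bigr]\rA_{L^r}\lesssim \widetilde h^{\,-d(\frac12-\frac1r)}\L{t-t_0}^{-d(\frac12-\frac1r)}\lA f\rA_{L^{r'}}.
\]

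It remains to convert $\lA f\rA_{L^{r'}(dz)}$ into $\lA g\rA_{L^{r'}(dx)}$. The successive changes of variable $y=\widetilde h z$, $x=X(t_0,y)$, together with the bound $|\det\partial_y X(t_0,\cdot)|\sim 1$ guaranteed by \eqref{X1}, give
\[
\lA f\rA_{L^{r'}}\lesssim \widetilde h^{\,-d/r'}\lA g\rA_{L^{r'}}.
\]
Combining the two displays produces the claimed exponent, since a direct computation shows
\[
-d\Bigl(\tfrac12-\tfrac1r\Bigr)-\tfrac{d}{r'}=-d\Bigl(\tfrac{2}{r'}-\tfrac12\Bigr),
\]
which finishes the proof.

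The main obstacle is the bookkeeping around the symbol $\varphi_h^*$: one has to argue that although $\Op(\varphi_h^*)$ looks like a semi-classical frequency cut-off, its $z$-regularity is only of class $S^0_{1,1/2}$ because the change of coordinates $x=X(t,\widetilde h z)$ degrades smoothness by $\widetilde h^{-1/2}$ per derivative. All the other steps are essentially routine once \eqref{change:phi}--\eqref{est:rh2} are available.
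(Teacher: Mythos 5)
Your proof follows the paper's argument essentially line by line: apply identity \eqref{change:phi} at $t=t_0$ (using that $\varphi_1\equiv 1$ on $\Cr^1$), invoke the interpolated dispersive estimate \eqref{disp:w:inter} with $\chi=\varphi_h^*$ after noting $\varphi_h^*\in S^0_\lambda\cap S^0_{1,1/2}$, and convert $\|g\circ X(t_0,\widetilde h\,\cdot)\|_{L^{r'}}$ to $\widetilde h^{-d/r'}\|g\|_{L^{r'}}$ via the Lipschitz bound on $X$, followed by the same exponent bookkeeping. The extra verifications you spell out (support of $\varphi_h^*$ via $M_0$ near the identity, $L^2$-boundedness from $S^0_{1,1/2}$) are exactly the facts the paper subsumes in the remark that $\varphi_h^*\in S^0_\lambda$, so the two proofs coincide in substance.
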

\begin{proof}
We first apply the identity \eqref{change:phi} at $t=t_0$ and notice that $\varphi_1(h\xi)=1$ if $\xi\in \frac{1}{h}\Cr^1$ to have
\[
g\circ X(t_0, \th z)=\varphi_h^*(z, D_z)\big( g\circ X(t_0, \th z)\big)(z)-i\Op(r_h^2)\big( g\circ X(t_0, \th z)\big)(z).
\]
 The estimate \eqref{est:phi*} implies that $\varphi_h^*\in S^0_\ld\cap S^0_{1,\mez}$ ($\ld=\th^{-1})$, so the estimate \eqref{disp:w:inter} applied to $\chi=\varphi_h^*$ and $f(z)=g\circ  X(t_0,\th z)$ gives for all $r\in [2, \infty]$
 \begin{multline*}
		\lA S_h(t, t_0)\big(g\circ X(t_0, \th z)\big)\rA_{L^r}\les \th^{-d(\mez-\frac{1}{r})}
		  \la t-t_0\ra^{-d(\mez-\frac{1}{r})}\lA g\circ X(t_0, \th z)\rA_{L^{r'}}\\
+\lA S(t, t_0)\Op(r_h^2)\big(g\circ X(t_0, \th z)\big)\rA_{L^r}.
\end{multline*}
Finally, since $X$ is Lipschitz we have
\[
\lA g\circ X(t_0, \th z)\rA_{L^{r'}}\les \th^{-\frac{d}{r'}} \lA g\rA_{L^{r'}}
\]
from which we conclude the proof.
\end{proof}
To control the right-hand side of the estimate in the preceding Corollary, we use the following Lemma, whose proof is identical to that of Lemma \ref{energy:0}.
\begin{lemm}\label{energy}
For any $\mu \in \xR$, the operators $S_h(t,s)$, $\mathcal{S}_j(t,s)$ are bounded on $H^\mu(\xR^d)$ uniformly in $t, s\in I$ .
\end{lemm}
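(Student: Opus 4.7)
The plan is to mirror the proof of Lemma \ref{energy:0} for both propagators, bridging $\mathcal{S}_j$ to $S_h$ via the change of variables already worked out in \eqref{eq:whw}.

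For $S_h$, the key input is that $p_h^0 \in \lambda S^2_\lambda$ has regularity index $k = 2 \geq 1$ and is supported in $\{\th|\xi|\sim 1\}$ by Proposition \ref{est:p0}. Therefore Lemma \ref{toW:improve} applies and gives $\Op^w(p_h^0) = \Op(p_h^0) + \Op(\rho_h)$ with $\rho_h \in S^0_{1,\mez,\mez}$. If $f$ solves $(D_t + \Op^w(p_h^0))f = 0$, I set $f^\mu := \langle D_x\rangle^\mu f$ and compute $\frac{d}{dt}\|f^\mu\|_{L^2}^2$. The resulting expression contains the self-adjointness defect $\Op(p_h^0) - \Op^*(p_h^0)$, which lies in $S^0_{1,\mez}$ by Lemma \ref{adjoint:improve} because $p_h^0$ is real (since $\gamma$, $\tph$, and the coordinate changes $X$, $M_0$ are all real), together with the commutator $[\langle D_x\rangle^\mu, \Op(p_h^0)]$, which lies in $S^\mu_{1,\mez}$ by Lemma \ref{compose:improve}. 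All terms are therefore $O(\|f\|_{H^\mu}^2)$, and Gronwall yields the uniform bound $\|f(t)\|_{H^\mu} \lesssim \|f(s)\|_{H^\mu}$.

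For $\mathcal{S}_j$, given $u$ with $\mathcal{L}_j u = 0$, I introduce $w_h(t,z) := u(t, X(t, \th z))$. By \eqref{eq:whw}, $w_h$ solves $(D_t + \Op^w(p_h^0))w_h + \Op(r_h^1)w_h = 0$ with $r_h^1 \in S^0_{1,\mez,\mez}$. The energy argument above goes through verbatim: the extra term $\Op(r_h^1)w_h$ is a bounded perturbation on every $H^\mu$ and only adds an $O(\|w_h\|_{H^\mu})$ contribution absorbed by Gronwall. It then remains to transfer the $H^\mu$ bound for $w_h$ back to $u$; by Proposition \ref{est:X}, the map $y \mapsto X(t, y)$ is a $C^\infty$ diffeomorphism close to the identity on a small time interval $[0, T_0]$ (iterating if $T > T_0$), with uniform bi-Lipschitz constants and controlled higher derivatives. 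Composition with $X(t, \th\cdot)$ is therefore bounded on $H^\mu$ with $\th$-dependent constants that are uniform in $t \in I$, so they cancel between the two endpoints in the final comparison $\|u(t)\|_{H^\mu} \lesssim \|u(s)\|_{H^\mu}$.

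The main obstacle is the reliance on the enhanced symbolic calculus for the class $\lambda^m S^k_\lambda$ with $k \geq 1$, namely Lemmas \ref{toW:improve}, \ref{compose:improve}, \ref{adjoint:improve}. What matters is that these produce remainders of order $0$ rather than the naive order $\mez$ that would arise from treating $p_h^0$ merely as an element of $S^2_{1,\mez}$; otherwise the energy estimate would lose $\mez$ derivative at each commutation step and fail. That $p_h^0$ indeed sits in $\lambda S^2_\lambda$ with $k = 2$ is exactly what Proposition \ref{est:p0} delivers, provided the compatibility conditions \eqref{r0r1} on $r_0$, $r_1$, $\delta$ are in force.
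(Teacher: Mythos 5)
Your proposal is essentially the same as the paper's: the paper states that the proof of Lemma \ref{energy} is ``identical'' to that of Lemma \ref{energy:0}, and for $S_h$ your argument reproduces that proof exactly — split off the Weyl remainder via Lemma \ref{toW:improve}, control the self-adjointness defect via Lemma \ref{adjoint:improve}, control the commutator with $\langle D_x\rangle^\mu$ via Lemma \ref{compose:improve}, and apply Gronwall. You also correctly pinpoint the essential point, namely that the $k\geq 1$ improvement in the $\lambda^m S^k_\lambda$ calculus is what keeps all remainders at order $0$ rather than order $\mez$.

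For $\mathcal{S}_j$, which the paper handles only implicitly, your route through the change of variables $w_h(t,z) = u_j(t,X(t,\th z))$ and equation \eqref{eq:whw} is sound: the identity \eqref{eq:whw} is a pure change-of-variables statement that does not require frequency localization of $u_j$, the extra perturbation $\Op(r_h^1)$ with $r_h^1\in S^0_{1,\mez,\mez}$ is indeed of order $0$ and is absorbed by Gronwall, and the conjugating map (the scaling by $\th$ composed with the diffeomorphism $X(t,\cdot)$) has bi-Lipschitz and higher-derivative bounds uniform in $t$ on $[0,T_0]$ by Proposition \ref{est:X}. You are a bit terse on the transfer-back step: the comparison $\|w_h(t)\|_{H^\mu}\approx C(h,\mu)\|u_j(t)\|_{H^\mu}$ has an $h$-dependent (and for large $\mu$, $h$-singular) constant coming from the $\th$-rescaling and the growing higher derivatives of $X$, and the point is precisely that this constant is independent of $t$ and hence cancels between the two endpoints; it is worth stating that cancellation explicitly rather than leaving it as an assertion, but the idea is right. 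Overall this is a correct reconstruction.
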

Henceforth, we choose $\eps_0>0$ arbitrarily small and
\bq\label{choose:r0r1}
r_1=\frac{1}{6};\quad r_0=\frac{2}{3}+\eps_0 ~\text{when}~d=1,\quad r_0=1~\text{when}~d=2.
\eq
so that 
\begin{align*}
&2\delta(1-r_0)=\mez-\tdm\eps_0,\quad 2\delta(2-r_0)=2-\tdm \eps_0\quad \text{when}~d=1,\\
&2\delta(1-r_0)=0,\quad 2\delta(2-r_0)=\tdm \quad \text{when}~d=2
\end{align*}
and \eqref{r0r1} is fulfilled.\\
The next proposition proves what we called \enquote{pseudo-dispersive estimates} above.
\begin{prop}\label{pseudo:disp}
If $\mathcal{L}_ju_j(t, x)=0$ and $u_j(t),~t\in [0, T]$ are spectrally supported in  $\frac{1}{h}\Cr^1$  then for any $t_0\in [0, T]$ and $r\in [2,\infty]$ we have
\[\lA u_j(t)\rA_{L^r}\les\th^{-d(\frac{2}{r'}-\mez)}
		   \la t-t_0\ra^{-d(\mez-\frac{1}{r})}\lA u_j(t_0)\rA_{L^{r'}}+h^{-\frac{d}{4}} \lA u_j(t_0)\rA_{L^2}
\]
where $r=\infty$ when $d=1$ and $r\in [2,\infty)$ when $d=2$.
\end{prop}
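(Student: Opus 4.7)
The natural approach is to pass from $u_j$ to the rescaled unknown $w_h(t,z)=u_j(t,X(t,\th z))$ introduced in \S\ref{Straighten}. Thanks to identity \eqref{eq:whw} and the assumption $\mathcal{L}_j u_j=0$, the function $w_h$ obeys $(D_t+\Op^w(p_h^0))w_h=-\Op(r_h^1)w_h$, hence Duhamel's formula gives $w_h(t)=S_h(t,t_0)w_h(t_0)-i\int_{t_0}^{t}S_h(t,\tau)\Op(r_h^1)w_h(\tau)\,d\tau$. The change of variables $x=X(t,\th z)$ together with Proposition \ref{est:X} yields $\|u_j(t)\|_{L^r}\sim \th^{d/r}\|w_h(t)\|_{L^r_z}$ and $\|w_h(t_0)\|_{L^{r'}_z}\sim \th^{-d/r'}\|u_j(t_0)\|_{L^{r'}}$, so the proposition reduces to an $L^r_z$-bound for the right-hand side of this Duhamel identity, followed by undoing the rescaling.

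For the principal term $S_h(t,t_0)w_h(t_0)=S_h(t,t_0)\bigl(g\circ X(t_0,\th\,\cdot)\bigr)$ with $g=u_j(t_0)$, Corollary \ref{disp:w1} supplies $\th^{-d(2/r'-\mez)}\la t-t_0\ra^{-d(\mez-1/r)}\|g\|_{L^{r'}}$ together with the $\Op(r_h^2)$-error $\|S_h(t,t_0)\Op(r_h^2)(g\circ X)\|_{L^r}$. After multiplying by $\th^{d/r}$, the first piece gives precisely the dispersive term appearing in the statement. For the error I would invoke the Sobolev embedding $H^{d/2+\eta}\hookrightarrow L^r$ (which is why the hypothesis requires $r<\infty$ in $d=2$ and only $r=\infty$ in $d=1$) combined with Lemma \ref{energy} to commute $S_h$ past the norm, and then appeal to the estimate \eqref{est:rh2}: the operator $\Op(r_h^2)$ gains a factor $\th^{2-2\delta(1-r_0)}$ in $L^2$ at spectral scale $\th^{-1}$, which together with $\|g\circ X\|_{L^2}\sim\th^{-d/2}\|g\|_{L^2}$ and the Bernstein-type $H^{d/2+\eta}$ loss of size $\th^{-(d/2+\eta)}$ stays strictly below the $h^{-d/4}\|u_j(t_0)\|_{L^2}$ budget under the choice \eqref{choose:r0r1} of $r_0$ and $r_1$.

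The Duhamel integral is treated in the same spirit. By Lemma \ref{est:rh}, with $\delta=\tq$ and $r_0,r_1$ fixed as in \eqref{choose:r0r1}, $r_h^1\in S^{-1+2\delta(1-r_0)}_{1,\mez,\mez}$ is localized at frequency $\th^{-1}$, so $\Op(r_h^1)$ maps $L^2\to L^2$ with operator norm $O(\th^{1-2\delta(1-r_0)})$. Composing with the $L^2$-isometry $S_h(t,\tau)$, then using Bernstein/Sobolev to move from $L^2$ to $L^r$ at scale $\th^{-1}$, integrating over the bounded interval $[t_0,t]$, and applying Lemma \ref{energy} to the full propagator $\mathcal{S}_j$ to bound $\|u_j\|_{L^\infty_t L^2}$ by $\|u_j(t_0)\|_{L^2}$, gives a contribution that fits comfortably inside $h^{-d/4}\|u_j(t_0)\|_{L^2}$. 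Multiplying through by $\th^{d/r}$ closes the proof.

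The principal difficulty I anticipate is the delicate bookkeeping of $\th$-powers in both remainder terms and the verification that the constraints \eqref{r0r1} — in particular the tight saturated choice $r_1=\tfrac{1}{6}$, $r_0=\frac{2}{3}+\eps_0$ in dimension $d=1$ — leave just enough margin to keep $\Op(r_h^1)$ and $\Op(r_h^2)$ strictly subdominant to the dispersive main term. The endpoint $r=\infty$ in $d=1$ is especially delicate since two of the four constraints in \eqref{r0r1} are then saturated up to the arbitrarily small parameter $\eps_0>0$, which is effectively the only source of slack in the argument.
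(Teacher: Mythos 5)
Your strategy is essentially identical to the paper's: pass to $w_h(t,z)=u_j(t,X(t,\th z))$, apply Duhamel to the equation \eqref{eq:whw}, estimate the principal term via Corollary \ref{disp:w1}, and absorb the $\Op(r_h^2)$ error and the Duhamel integral involving $\Op(r_h^1)$ by combining Sobolev embedding, the energy bound of Lemma \ref{energy}, and the symbol estimates. The $\th$-power bookkeeping you carry out for $\Op(r_h^2)$ matches the paper's, and your closing remark about the saturated constraints in $d=1$ is accurate. One harmless imprecision: multiplying the dispersive bound by the Jacobian factor $\th^{d/r}$ does not give "precisely" the stated term but something smaller by $\th^{d/r}$; the paper simply uses $\|u_j\|_{L^r}\les\|w_h\|_{L^r}$ since $\th^{d/r}\le1$, which is what the estimate actually requires.

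The one step worth scrutiny — and it is a point where the paper itself is internally inconsistent — is your (and the paper's) assertion that $r_h^1\in S^{-1+2\delta(1-r_0)}_{1,\mez,\mez}$. You cite Lemma \ref{est:rh}, but that lemma only controls $r_h$ (the remainder from freezing $z'=z$ in \S\ref{Approximation}); $r_h^1$ also contains the Weyl correction $r'_h$ from \eqref{p:pw}, which the proposition preceding \eqref{eq:whw} places only in $S^0_{1,\mez,\mez}$. Inspecting $r'_h\sim\partial_\xi\partial_z p_h^0$ via \eqref{dxp} and Lemma \ref{est:qh}, the $q_{x\xi}$ contribution yields $|r'_h|\les\th^{1-2\delta(\mez-r_1)}=\th^{\mez}$ with the paper's fixed choice $r_1=\tfrac16$, i.e.\ $r'_h\in S^{-\mez}$ — which agrees with $S^{-1+2\delta(1-r_0)}$ up to $\th^{\tdm\eps_0}$ when $d=1$, but is short by $\th^{\mez}$ when $d=2$ (where $-1+2\delta(1-r_0)=-1$), enough to break the budget for $r$ large. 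The natural repair is to take $r_1$ larger (say $r_1\ge\mez$), consistent with the regularity $s>s(d)$ available; then $q_{x\xi}$ contributes $\th$ and $r'_h\in S^{-1+2\delta(1-r_0)}$ does hold. Since you are reproducing the paper's argument faithfully, I don't regard this as a flaw you introduced, but any complete write-up should either sharpen Lemma \ref{toW:improve} for $p_h^0$ specifically or adjust $r_1$ as indicated.
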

\begin{proof}
First, equation \eqref{eq:whw} implies that $w_h(t,z):=u_j(t, X(t, \th z))$ satisfies 
\[
w_h(t)=S_h(t,t_0)w_h(t_0)-\int_{t_0}^t S(t,s) \lp \Op_h(r_h^1)\rp w_h(s)ds=:(1)+(2).
\]
Applying Corollary \ref{disp:w1} to $g(x)=u_j(t_0, x)$ and then using the Sobolev embeddings
\[
H^{\frac{d}{2}+\eps}\hookrightarrow L^\infty,\quad H^{\frac{d}{2}}\hookrightarrow L^r~\forall r\in [2,\infty)
\]
together with Lemma \ref{energy} one gets
\[
\lA (1)\rA_{L^r}\les \th^{-d(\frac{2}{r'}-\mez)}
		   \la t-t_0\ra^{-d(\mez-\frac{1}{r})}\lA u_j(t_0)\rA_{L^1}+\lA \Op(r_h^2)w_h(t_0)\rA_{H^{\frac{d}{2}+\eps_r}}
\]
where 
\[
\eps_r=0~\text{if}~r\in [2,\infty),\quad \eps_r=\tdm \eps_0~\text{if}~r=\infty.
\]
The estimate \eqref{est:rh2} gives $r_h^2\in S^{-2+2\delta(1-r_0)}_{1,\mez, \mez}$, hence 
\[
\lA \Op(r_h^2)w_h(t_0)\rA_{H^{\frac{d}{2}+\eps_r}}\les \lA w_h(t_0)\rA_{H^{\frac{d}{2}+\eps_r-2+2\delta(1-r_0)}}.
\]
Similarly, since $r^1_h\in S^{-1+2\delta(1-r_0)}_{1, \mez, \mez}$ one deduces with the aid of Lemma \ref{energy}
\[
\lA (2)\rA_{L^r}\les \int_{t_0}^t \lA w_h(s)\rA_{H^{\frac{d}{2}+\eps_r-1+2\delta(1-r_0)}}ds.
\]
Putting together the above estimates, one obtains 
\[
\lA w_h(t)\rA_{L^r}\les \th^{-d(\frac{2}{r'}-\mez)}
		   \la t-t_0\ra^{-d(\mez-\frac{1}{r})}\lA u_j(t_0)\rA_{L^{r'}}+ \int_{t_0}^t\lA w_h(s)\rA_{H^{\frac{d}{2}+\eps_r-1+2\delta(1-r_0)}}ds.
\]
When $d=1,~r=\infty$, $\frac{d}{2}+\eps_r-1+2\delta(1-r_0) =0$ and since $X(t,\cdot)\in W^{1,\infty}(\xR^d)$ we have for all $s\in [t_0, t]$
\[ \lA w_h(s)\rA_{H^{\frac{d}{2}+\eps_r-1+2\delta(1-r_0)}}\les  \th^{-\frac{d}{2}}\lA u_j(s)\rA_{L^2}.
\]
When $d=2,~r\in [2, \infty)$, $\frac{d}{2}+\eps_r-1+2\delta(1-r_0)=0$ and thus
\[ 
\lA w_h(s)\rA_{H^{\frac{d}{2}+\eps_r-1+2\delta(1-r_0)}}ds\les  \th^{-\frac{d}{2}}\lA u_j(s)\rA_{L^2}ds\les \th^{-\frac{d}{2}}\lA u_j(s)\rA_{L^2}\quad \forall s\in [t_0, t].
\]
On the other hand, by Lemma \ref{energy}, $\lA u_j(s)\rA_{L^2}\les \lA u_j(t_0)\rA_{L^2}$ for all $s\in [t_0, t]$. Finally, noticing that 
\[
\lA u_j(t)\rA_{L^r}\les \th^{\frac{1}{r}}\lA w_h(t)\rA_{L^r}\les \lA w_h(t)\rA_{L^r}
\]
we conclude the proof.
\end{proof}
\begin{rema}
Strictly speaking, the preceding estimate is not a standard dispersive estimate since it does not show decay in time on the right hand side. The appearance of the non-decaying term $h^{-\frac{d}{4}} \lA u_j(t_0)\rA_{L^2}$ is however harmless for the purpose of proving Strichartz estimates in the next paragraph.
\end{rema}
\subsection{Strichartz estimates}
\begin{prop}\label{Str:uj}
Suppose that $\mathcal{L}_ju_j(t, x)=0$ and $u_j(t),~t\in I:=[0, T]$ is spectrally supported in $\frac{1}{h}\Cr^1$. 
\begin{itemize}
\item[(i)] When $d=1$ we have
\bq\label{Str:uj:1d}
\lA u_j\rA_{L^4(I;L^\infty)}\les h^{-\frac{3}{8}}\lA u_j\arrowvert_{t=0}\rA_
{L^2}.
\eq
\item[(ii)] When $d=2$ we have with  $q>2,~\frac{2}{q}+\frac{2}{r}=1$
\bq\label{Str:uj:2d}
\lA u_j\rA_{L^q(I;L^r)}\les h^{\frac{1}{4}-\frac{1}{r'}}\lA u_j\arrowvert_{t=0}\rA_
{L^2}.
\eq
Consequently, for any $s_0\in \xR$ and $\eps>0$
\bq\label{Str:uj:2d:H}
\lA u_j\rA_{L^{2+\eps}(I;W^{s_0-\frac{3}{4}-\eps,\infty})}\les \lA u_j\arrowvert_{t=0}\rA_
{H^{s_0}}.
\eq
\end{itemize}
\end{prop}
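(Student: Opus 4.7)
The strategy is to run a classical $TT^*$ Strichartz argument for the propagator $\mathcal{S}_j(t,t_0)$ of $\mathcal{L}_j$, fed with the pseudo-dispersive bound of Proposition \ref{pseudo:disp}. Set $Tv_0 := \mathcal{S}_j(\cdot,0)v_0$. Since data spectrally supported in $\frac{1}{h}\Cr^1$ give solutions with the same localization, $T$ maps such $L^2$ functions to $L^q_t L^r_x$, and $TT^*F(t)=\int_0^T \mathcal{S}_j(t,s)F(s)\,ds$. The claim $\lA T\rA \lesssim h^{\mu}$ is equivalent to $\lA TT^*\rA \lesssim h^{2\mu}$, and Proposition \ref{pseudo:disp} together with Lemma \ref{energy} gives the bilateral estimate
\[
\lA \mathcal{S}_j(t,s)v\rA_{L^r_x}\lesssim h^{-d/r'+d/4}\,|t-s|^{-d(\mez-1/r)}\lA v\rA_{L^{r'}_x}+h^{-d/4}\lA v\rA_{L^2_x}
\]
for $v$ spectrally supported in $\frac{1}{h}\Cr^1$.

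Inserting this into $TT^*F$ produces two contributions. For the dispersive piece, the admissibility $\frac{2}{q}+\frac{d}{r}=\frac{d}{2}$ ensures the time-decay exponent $d(\mez-\frac{1}{r})=\frac{2}{q}$, so Hardy--Littlewood--Sobolev in $t$ yields
\[
\lA \int_0^T h^{-d/r'+d/4}|t-s|^{-2/q}\lA F(s)\rA_{L^{r'}_x}\,ds\rA_{L^q_t}\lesssim h^{-d/r'+d/4}\lA F\rA_{L^{q'}_t L^{r'}_x}.
\]
For $d=1$ the pair $(q,r)=(4,\infty)$ is admissible ($r'=1$) and the prefactor is $h^{-3/4}$; for $d=2$ any pair with $\frac{2}{q}+\frac{2}{r}=1$, $q>2$, works.

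The delicate step is the non-decaying term $h^{-d/4}\lA F(s)\rA_{L^2_x}$, which is the main obstacle. However, in the $TT^*$ framework we may restrict $F$ to be spectrally supported in $\frac{1}{h}\Cr^1$ (composing with a spectral cutoff of $T^*$ inside $TT^*$ is harmless because of the spectral support of $Tv_0$). Then Bernstein's inequality gives $\lA F(s)\rA_{L^2}\lesssim h^{-d(1/r'-\mez)}\lA F(s)\rA_{L^{r'}}$, and H\"older in $s$ turns the time-integral into $T^{2/q}h^{-d/4-d/r'+d/2}=T^{2/q}h^{-d/r'+d/4}\lA F\rA_{L^{q'}L^{r'}}$. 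This is precisely the same power of $h$ as the dispersive contribution, so
\[
\lA TT^*\rA_{L^{q'}L^{r'}\to L^q L^r}\lesssim h^{-d/r'+d/4},\qquad \lA T\rA_{L^2\to L^q L^r}\lesssim h^{d/8-d/(2r')}.
\]
For $d=1$, $(q,r)=(4,\infty)$ this is $h^{-3/8}$, proving \eqref{Str:uj:1d}; for $d=2$ this gives $h^{1/4-1/r'}$, proving \eqref{Str:uj:2d}.

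Finally, to deduce \eqref{Str:uj:2d:H} in dimension two, I choose $q=2+\eps$ and the companion $r=r_\eps:=2(2+\eps)/\eps<\infty$ so that $(q,r)$ is admissible. Applying \eqref{Str:uj:2d}, then Bernstein $\lA u_j(t)\rA_{L^\infty}\lesssim h^{-2/r_\eps}\lA u_j(t)\rA_{L^{r_\eps}}$ (since $u_j(t)$ is spectrally localized at $|\xi|\sim h^{-1}$), and the equivalences $\lA u_j(t)\rA_{W^{s_0-\frac{3}{4}-\eps,\infty}}\sim h^{-(s_0-\frac{3}{4}-\eps)}\lA u_j(t)\rA_{L^\infty}$ and $\lA u_j(0)\rA_{H^{s_0}}\sim h^{-s_0}\lA u_j(0)\rA_{L^2}$, a direct arithmetic of exponents shows that the accumulated loss is $\frac{\eps}{2(2+\eps)}<\eps$, yielding \eqref{Str:uj:2d:H}.
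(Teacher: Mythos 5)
Your proposal is correct and follows essentially the same route as the paper: a $TT^*$ argument fed by Proposition \ref{pseudo:disp}, with the Hardy--Littlewood--Sobolev inequality handling the dispersive contribution and a Bernstein/H\"older maneuver absorbing the non-decaying term $h^{-d/4}\lA\cdot\rA_{L^2}$, together with the observation that only spectrally localized inputs need to be considered. Your exponent bookkeeping checks out ($h^{M/2}$ with $M=-3/4$ for $d=1$ and $M=1/2-2/r'$ for $d=2$, and the final loss $\eps/(2(2+\eps))<\eps$ for \eqref{Str:uj:2d:H}), and the only cosmetic divergence from the paper is that you invoke Bernstein $L^{r_\eps}\to L^\infty$ directly rather than the equivalent Sobolev embedding $W^{a+b,r}\hookrightarrow W^{a,\infty}$ with $b>d/r$; for dyadically localized pieces these are the same estimate.
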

\begin{proof}
For the two estimates \eqref{Str:uj:1d}, \eqref{Str:uj:2d}, using the $TT^*$ argument, one need to show that
\[
K:=\int_IS(t,s)ds : L^{q'}(I; L^{r'})\to L^q(I, L^r)
\]
with norm bounded by $h^M$ where $M=-\frac{3}{4}$ when $d=1$ and $M=\frac{1}{2}-\frac{2}{r'}$ when $d=2$. Moreover, since $u_j$ is spectrally supported in $\frac{1}{h}\Cr^1$, it suffices to prove 
\bq\label{est:K}
\lA Kf\rA_{L^q(I, L^r)}\les h^{M} \lA f\rA_{L^{q'}(I; L^{r'})}
\eq
for every $f$ spectrally supported in $\frac{1}{h}\Cr^1$.\\
In view of the "pseudo dispersive estimate" in Proposition \ref{pseudo:disp},
\[
\lA K(t)f\rA_{L^r}\les (1)+(2) , \quad \text{with}
\]
\begin{align*}
& (1)=h^{-d(\frac{1}{r'}-\frac{1}{4})}
		  \int_I
		  \la t-s\ra^{-d(\mez-\frac{1}{r})}\lA f(s)\rA_{L^{r'}}ds,\\
& (2)=h^{-\frac{d}{4}} \int_I\lA f(s)\rA_{L^2}ds.
\end{align*}
$(i)$ $d=1,~(q, r)=(4,\infty)$. By the Hardy-Littlewood-Sobolev inequality, $\lA (1)\rA_{L^q_t}$ is bounded by the right-hand side of \eqref{est:K}. On the other hand, $(2)$ can be estimated using Sobolev embedding as
\[
(2)\les h^{-\frac{d}{4}}h^{-\frac{d}{2}}\lA f(s)\rA_{L^1}ds \les h^{-\frac{3d}{4}} \lA f\rA_{L^1 L^1}
\]
which concludes the proof of \eqref{Str:uj:1d}.\\
$(ii)$ $d=2$, $q>2,~\frac{2}{q}+\frac{2}{r}=1$. Again, the Hardy-Littlewood-Sobolev inequality  yields
\[
\lA (1)\rA_{L^{q}}\les h^{-d(\frac{1}{r'}-\frac{1}{4})}\lA f\rA_{L^{q'}(I; L^{r'})}.
\]
For $(2)$ one uses the embedding $L^{r'}\hookrightarrow L^2,~r'\in [1, 2)$ 
\[
\lA f(s)\rA_{L^2}\les h^{\frac{d}{2}-\frac{d}{r'}}\lA f(s)\rA_{L^{r'}}
\]
to get 
\[
(2)\les h^{\frac{d}{4}-\frac{d}{r'}}\lA f\rA_{L^1L^{r'}}.
\]
The estimate \eqref{Str:uj:2d} then follows.\\
 Now, for any $\eps>0$, let $q=2+\eps$ then 
\[
\frac{2}{r}=\frac{\eps}{\eps+2},\quad 
\frac{1}{4}-\frac{1}{r'}=-\frac{3}{4}+\frac{\eps}{2(\eps+2)}.
\]
 Multiplying both sides of \eqref{Str:uj:2d} by $h^{-s_0}$ yields
\bq\label{proof:2d}
\lA u_j\rA_{L^{q}(I;W^{{s_0}-\frac{3}{4}+\frac{\eps}{2(\eps+2)}, r})}\les \lA u_j\arrowvert_{t=0}\rA_
{H^{s_0}}.
\eq
 Writing ${s_0}-\frac{3}{4}+\frac{\eps}{2(\eps+2)}=a+b,~a={s_0}-\frac{3}{4}-\eps,~b=\eps+\frac{\eps}{2(\eps+2)}>\frac{d}{r}$ we obtain \eqref{Str:uj:2d:H} from \eqref{proof:2d} and the Sobolev embedding $W^{a+b,r}(\xR^d)\hookrightarrow W^{a, \infty}$ with $b>\frac{d}{r}$.
\end{proof}
Recall from \eqref{defi:r0r1} and \eqref{choose:r0r1} that we have required
\bq\label{condition:V,gamma}
V\in L^\infty(I; W^{\rho,\infty}(\xR^d)),~\gamma(\cdot, \xi)\in L^\infty(I; W^{\frac{2}{3},\infty}(\xR^d))
\eq
where $\rho=\frac{5}{3}+\eps_0,~\eps_0>0$ when $d=1$ and $\rho=2$ when $d=2$.
\begin{theo}
Let ${s_0}\in \xR$, $I=[0, T],~T\in (0, +\infty)$ and $u\in L^\infty(I, H^{s_0}(\xR^d))$ be a solution to the problem 
\[
\partial_tu+ T_V\cdot\nabla  u+iT_\gamma u=f,\quad u\arrowvert_{t=0}=u^0
\]
where $\gamma=\sqrt{\ld a}$ as defined in Theorem \ref{theo:sym}.  \\
1. When $d=1$, if for some $\eps_0>0$, $V\in L^\infty(I, W^{\frac{5}{3}+\eps_0,\infty}(\xR^d)),~\eta\in L^\infty(I, W^{\frac{5}{3},\infty}(\xR^d))$ then 
\[
\lA u\rA_{L^4(I;W^{{s_0}-\frac{3}{8},\infty})}\les \lA u^0\rA_{H^{s_0}}+\lA f\rA_{L^1(I, H^{s_0})}.
\]
2. When $d=2$, if $V\in L^\infty(I, W^{2,\infty}(\xR^d)),~\eta\in L^\infty(I, W^{\frac{5}{3},\infty}(\xR^d))$ then for every $\eps>0$
\[
\lA u\rA_{L^{2+\eps}(I;W^{{s_0}-\frac{3}{4}-\eps,\infty})}\les_\eps \lA u^0\rA_{H^{s_0}}+\lA f\rA_{L^1(I, H^{s_0})}.
\]
In the above estimates, the dependent constants depend on a finite number of semi-norms of the symbols $V$ and $\gamma$.
\end{theo}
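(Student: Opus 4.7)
The plan is to mirror the dyadic-plus-Duhamel strategy used for Theorem \ref{main:theo:0}, but with Proposition \ref{Str:uj} playing the role of Corollary \ref{Str:long}. First I would Littlewood-Paley decompose as in Section 4.1: each block $u_j:=\Delta_j u$ solves $\mathcal{L}_j u_j = F_j$ with $F_j=F_j^1+R_j+F_j^3$ given by \eqref{Fj1}--\eqref{Fj}, and the spectrum of $u_j$ (and of $F_j$ after a harmless spectral cut) sits in $\frac{1}{h}\mathcal{C}^1$. Letting $\mathcal{S}_j$ be the propagator of the homogeneous problem, Duhamel gives
\[
u_j(t)=\mathcal{S}_j(t,0)\Delta_ju^0+\int_0^t\mathcal{S}_j(t,s)F_j(s)\,ds,
\]
so Minkowski and Proposition \ref{Str:uj} yield, after multiplication by $h^{-s_0}$,
\[
\lA u_j\rA_{L^4(I;W^{s_0-\frac{3}{8},\infty})}\les\lA \Delta_ju^0\rA_{H^{s_0}}+\lA F_j\rA_{L^1(I;H^{s_0})}\qquad(d=1),
\]
and the analogue \eqref{Str:uj:2d:H} when $d=2$.

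The next step is to control $\lA F_j\rA_{L^1 H^{s_0}}$ by $\lA u\rA_{L^\infty H^{s_0}}+\lA f\rA_{L^1 H^{s_0}}$ uniformly in $j$. The piece $\Delta_j f$ is direct, the commutators $[T_\gamma,\Delta_j]$ and $[T_V,\Delta_j]\cdot\nabla$ in $F_j^1$ are handled by the standard paradifferential symbolic calculus (Theorem \ref{theo:sc}) using the hypotheses on $V$ and $\gamma$, and the remainder $R_j$ from Proposition \ref{para:pseudo} is controlled by \eqref{bound:Rj}. For the smoothing error $F_j^3$ — namely $(S_{(j-3)\delta}\gamma - S_{j-3}\gamma)(x,D_x)\Delta_ju$ and its $V$ analogue — I would argue exactly as in \eqref{Sja:a}: writing a spherical decomposition of the symbol reduces the task to a purely spatial Bernstein estimate, and with the choice $\delta=\frac{3}{4}$ and $(r_0,r_1)$ as in \eqref{choose:r0r1} the induced loss $2^{-j\delta r}\cdot 2^{j\cdot(\text{symbol order})}$ is absorbed by the $H^{s_0}$ norm of $u$, exactly matching the hypotheses \eqref{condition:V,gamma} on $V$ and $\gamma$ (equivalently on $\eta$, via $\gamma=\sqrt{\lambda a}$).

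Finally I would sum in $j$. For $d=2$ the small $\eps$-loss in the Hölder index is built into \eqref{Str:uj:2d:H}, so a factor $2^{-j\eps}$ is available for the summation (as in \eqref{Str:u}); for $d=1$ I would similarly absorb half of the available gain into an $\eps$-loss and use Minkowski in the $\ell^2$ direction on $L^4$. This yields the stated bounds for $u$; passing back to $(\eta,\psi,B,V)$ — but the present theorem is phrased directly in terms of $u$, so that step is unnecessary here. The \textbf{main obstacle} is verifying that the symbol-smoothing losses in $F_j^3$ are truly absorbed: this is precisely where the exponent $\delta=\frac{3}{4}$, the choice of $(r_0,r_1)$ in \eqref{choose:r0r1}, and the regularity requirements \eqref{condition:V,gamma} are pinned down, so the proof works only under these balanced hypotheses, which is why the thresholds $W^{5/3+\eps_0,\infty}$ and $W^{5/3,\infty}$ (respectively $W^{2,\infty}$ and $W^{5/3,\infty}$) appear in the statement.
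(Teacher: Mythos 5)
Your proposal follows the paper's own argument: Littlewood--Paley reduction so that each $\Delta_ju$ solves $\mathcal{L}_j\Delta_ju=F_j$ with $F_j=F_j^1+R_j+F_j^3$, Duhamel and Proposition \ref{Str:uj} to obtain the dyadic estimate, control of $F_j$ in $L^1H^{s_0}$ via Theorem \ref{theo:sc}, Proposition \ref{para:pseudo} and \eqref{Sja:a}, and summation in $j$. That is exactly what the paper does, and you correctly identify \eqref{condition:V,gamma} together with the choice $\delta=\tfrac{3}{4}$ and $(r_0,r_1)$ in \eqref{choose:r0r1} as the place where the hypotheses on $V$ and $\eta$ are pinned down. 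The paper also inserts, at the very end, the energy estimate $\lA u\rA_{L^\infty H^{s_0}}\les\lA u^0\rA_{H^{s_0}}+\lA f\rA_{L^1 H^{s_0}}$ to absorb the $\lA u\rA_{L^1 H^{s_0}}$ term produced by the $F_j$ bounds; you should make that step explicit.

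One small imprecision: for $d=1$ you propose to ``absorb half of the available gain into an $\eps$-loss'' before summing. That move is neither necessary nor desirable here, since the claimed bound $\lA u\rA_{L^4(I;W^{s_0-3/8,\infty})}$ has no $\eps$-loss. What suffices (and what your final remark about ``Minkowski in the $\ell^2$ direction on $L^4$'' is really getting at) is to use the Zygmund-space characterization $\lA w\rA_{W^{\mu,\infty}}\sim\sup_j 2^{j\mu}\lA\Delta_jw\rA_{L^\infty}$ together with Minkowski's integral inequality to bound $\lA u\rA_{L^4W^{s_0-3/8,\infty}}$ by $(\sum_j\lA\Delta_ju\rA_{L^4W^{s_0-3/8,\infty}}^2)^{1/2}$, and then $\ell^2$-sum the dyadic estimates: the data contribute $(\sum_j\lA\Delta_ju^0\rA_{H^{s_0}}^2)^{1/2}\sim\lA u^0\rA_{H^{s_0}}$ and the source terms contribute $\lA u\rA_{L^1H^{s_0}}+\lA f\rA_{L^1 H^{s_0}}$ by Minkowski, with no $\eps$-loss. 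With this clarification your argument is complete and matches the paper's.
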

\begin{proof}
If $u$ is a solution to \eqref{ww:reduce} with data $u^0$ then by \eqref{eq:reg}, the dyadic piece $\Delta_j u$ is a solution to $\mathcal{L}_j\Delta_j u=F_j$ with $F_j$ given by \eqref{Fj} spectrally supported in $\frac{1}{h}\Cr^1$ with $c_1$ sufficiently large. Under the regularity assumptions of $V,~\gamma$ in $1.$ and $2.$, condition \eqref{condition:V,gamma} is fulfilled. Using Duhamel's formula and applying the Strichartz estimates in Proposition \ref{Str:uj} we deduce that
\bq\label{Str:dyadic}
\lA \Delta_ju\rA_{L^q(I;W^{{s_0}-\frac{d}{2}+\mu,\infty})}\les \lA \Delta_ju^0\rA_{ H^{s_0}}+\lA F_j\rA_{L^1(I; H^{s_0})}, 
\eq
where 
\[
 q=4,~\mu=\frac{1}{8}~\text{when}~d=1;\quad q=2+\eps,~\mu=\frac{1}{4}-\eps~\text{when}~d=2.
\]
We are left with the estimate for $F_j=F_j^1+R_j+F_j^3$ where $F_j^k$ are given by \eqref{Fj1},~\eqref{Fj}. Defining
\[
 \widetilde\Delta_j=\sum_{|k-j|\le 3} \Delta_k,
\]
it follows from \eqref{Fj2} that 
\[
\lA R_j\rA_{H^{s_0}}\les \Vert \widetilde\Delta_ju\Vert_{H^{s_0}}.
\]
Using the symbolic calculus Theorem \ref{theo:sc} one obtains without any difficulty that
\[
\lA F_j^1\rA_{L^1(I; H^{s_0})}\les \Vert \widetilde\Delta_ju\Vert_{L^1(I; H^{s_0})}.
\]
For  $F_j^3$ we use \eqref{Sja:a} to obtain that: if
\[
V\in L^\infty(I, W^{\frac{4}{3},\infty}(\xR^d)),~\eta\in L^\infty(I, W^{\frac{5}{3},\infty}(\xR^d))
\]
then 
\bq\label{est:F3}
\lA F_j^3\rA_{L^1(I; H^{s_0})}\les\lA \Delta_ju\rA_{L^1(I; H^{s_0})}+\lA \Delta_j f\rA_{L^1(I; H^{s_0})}.
\eq
Finally, putting together the above estimates we conclude from \eqref{Str:dyadic} that
\begin{align*}
\lA u\rA_{L^q(I;W^{{s_0}-\frac{d}{2}+\mu,\infty})}&\le \sum_j\lA \Delta_ju\rA_{L^q(I;W^{{s_0}-\frac{d}{2}+\mu,\infty})}\\
&\les \lA u^0\rA_{ H^{s_0}}+\lA u\rA_{L^1(I; H^{s_0})}+\lA f\rA_{L^1(I; H^{s_0})} \\
&\les \lA u^0\rA_{ H^{s_0}}+\lA f\rA_{L^1(I; H^{s_0})},
\end{align*}
where in the last inequality, we have used the energy estimate 
\[
\lA u\rA_{L^\infty(I; H^{s_0})}\les  \lA u^0\rA_{ H^{s_0}}+\lA f\rA_{L^1(I; H^{s_0})}.
\]
The proof is complete.
\end{proof}
According to  Remark \ref{inverse}, after having the estimate for $u$-solution to \eqref{ww:reduce} one can use the symbolic calculus  to obtain the desired estimates for the original solution $(\eta, \psi, B, V)$ as stated in Theorem \ref{main:theo}.\\ \\
{\bf Acknowledgment.}~
This work was partially supported by the labex LMH through the grant no ANR-11-LABX-0056-LMH in the "Programme des Investissements d'Avenir". I sincerely thank  Prof. Nicolas Burq and Prof. Claude Zuily for many helpful discussions and encouragements. I thank Prof. Daniel Tataru for suggesting me  his joint work \cite{KT}.
\appendix
\section{}\label{appendix}
\begin{defi}\label{functionalspaces}
1. (Littlewood-Paley decomposition) Let~$\psi \in C^\infty_0({\mathbf{R}}^d)$ be such that
\bq\label{defi:psi}
\psi(\theta)=1\quad \text{for }\la \theta\ra\le 1,\qquad 
\psi(\theta)=0\quad \text{for }\la\theta\ra>2.
\eq
Define
\begin{equation*}
\psi_k(\theta)=\kappa(2^{-k}\theta)\quad\text{for }k\in \xZ,
\qquad \varphi_0=\kappa_0,\quad\text{ and } 
\quad \varphi_k=\psi_k-\psi_{k-1} \quad\text{for }k\ge 1.
\end{equation*}
Given a temperate distribution $u$ and an integer $k$, we introduce 
$S_k u=\psi_k(D_x)u$, $\Delta_k u=S_k u-S_{k-1}u$ for $k\ge 1$ and $\Delta_0u=S_0u$. Then we have the formal decomposition 
\bq\label{partition}
u=\sum_{k=0}^{\infty}\Delta_k u.
\eq
2. (H\"older spaces) For~$k\in\xN$, we denote by $W^{k,\infty}({\mathbf{R}}^d)$ the usual Sobolev spaces.
For $\rho= k + \sigma$, $k\in \xN, \sigma \in (0,1)$ denote 
by~$W^{\rho,\infty}({\mathbf{R}}^d)$ 
the space of functions whose derivatives up to order~$k$ are bounded and uniformly H\"older continuous with 
exponent~$\sigma$. 
\end{defi}
\begin{defi}\label{spectral}
Let  $u$ be a tempered distribution in $\xR^d$. We define the spectrum of $u$ to be the support of the Fourier transform of $ u$. Then $u$ is said to be spectrally supported in a set $A\subset \xR^d$ if the spectrum of $u$ is contained in $A$.
\end{defi}
Let us review notations and results about Bony's paradifferential calculus (cf. \cite{Bony}, \cite{MePise}).
\begin{defi}\label{defi:para}
1. (Symbols) Given~$\rho\in [0, \infty)$ and~$m\in\xR$,~$\Gamma_{\rho}^{m}({\mathbf{R}}^d)$ denotes the space of
locally bounded functions~$a(x,\xi)$
on~${\mathbf{R}}^d\times({\mathbf{R}}^d\setminus 0)$,
which are~$C^\infty$ with respect to~$\xi$ for~$\xi\neq 0$ and
such that, for all~$\alpha\in\xN^d$ and all~$\xi\neq 0$, the function
$x\mapsto \partial_\xi^\alpha a(x,\xi)$ belongs to~$W^{\rho,\infty}({\mathbf{R}}^d)$ and there exists a constant
$C_\alpha$ such that,
\bq\label{para:symbol}
\forall\la \xi\ra\ge \mez,\quad 
\lA \partial_\xi^\alpha a(\cdot,\xi)\rA_{W^{\rho,\infty}(\xR^d)}\le C_\alpha
(1+\la\xi\ra)^{m-\la\alpha\ra}.
\eq
Let $a\in \Gamma_{\rho}^{m}({\mathbf{R}}^d)$, we define the semi-norm
\begin{equation}\label{defi:semi-norm}
M_{\rho}^{m}(a)= 
\sup_{\la\alpha\ra\le 2(d+2) +\rho ~}\sup_{\la\xi\ra \ge 1/2~}
\lA (1+\la\xi\ra)^{\la\alpha\ra-m}\partial_\xi^\alpha a(\cdot,\xi)\rA_{W^{\rho,\infty}({\mathbf{R}}^d)}.
\end{equation}
2. (Paradifferential operators) Given a symbol~$a$, we define
the paradifferential operator~$T_a$ by
\begin{equation}\label{eq.para}
\widehat{T_a u}(\xi)=(2\pi)^{-d}\int \chi(\xi-\eta,\eta)\widehat{a}(\xi-\eta,\eta)\rho(\eta)\widehat{u}(\eta)
\, d\eta,
\end{equation}
where
$\widehat{a}(\theta,\xi)=\int e^{-ix\cdot\theta}a(x,\xi)\, dx$
is the Fourier transform of~$a$ with respect to the first variable; 
$\chi$ and~$\rho$ are two fixed~$C^\infty$ functions such that:
\begin{equation}\label{cond.psi}
\rho(\eta)=0\quad \text{for } \la\eta\ra\le \frac{1}{5},\qquad
\rho(\eta)=1\quad \text{for }\la\eta\ra\geq \frac{1}{4},
\end{equation}
and~$\chi(\theta,\eta)$  is defined by
$
\chi(\theta,\eta)=\sum_{k=0}^{+\infty} \kappa_{k-3}(\theta) \varphi_k(\eta).
$
\end{defi}
We remark that the cut-off function $\chi$ in the preceding definition has the following properties for some $0<\eps_1<\eps_2<1$
\bq\label{chi:prop}
\begin{cases}
\chi(\eta, \xi)=1& \text{for}~|\eta|\le \eps_1(1+|\xi),\\
\chi(\eta, \xi)=0&\text{for}~|\eta|\ge \eps_2(1+|\xi).
\end{cases}
\eq
\begin{defi}\label{defi:order}
Let~$m\in\xR$.
An operator~$T$ is said to be of  order~$\leo m$ if, for all~$\mu\in\xR$,
it is bounded from~$H^{\mu}$ to~$H^{\mu-m}$. 
\end{defi}
Symbolic calculus for paradifferential operators is summarized in the following theorem.
\begin{theo}\label{theo:sc}(Symbolic calculus)
Let~$m\in\xR$ and~$\rho\in [0, \infty)$. \\
$(i)$ If~$a \in \Gamma^m_0({\mathbf{R}}^d)$, then~$T_a$ is of order~$\leo m$. 
Moreover, for all~$\mu\in\xR$ there exists a constant~$K$ such that
\begin{equation}\label{esti:quant1}\lA T_a \rA_{H^{\mu}\rightarrow H^{\mu-m}}\le K M_{0}^{m}(a).
\end{equation}
$(ii)$ If~$a\in \Gamma^{m}_{\rho}({\mathbf{R}}^d), b\in \Gamma^{m'}_{\rho}({\mathbf{R}}^d)$ then 
$T_a T_b -T_{a\sharp  b}$ is of order~$\leo m+m'-\rho$ with
\[
a\sharp b:=\sum_{|\alpha|<\rho}\frac{(-i)^{\alpha}}{\alpha !}\partial_{\xi}^{\alpha}a(x, \xi)\partial_x^{\alpha}b(x, \xi).
\] 
Moreover, for all~$\mu\in\xR$ there exists a constant~$K$ such that
\begin{equation}\label{esti:quant2}
\lA T_a T_b  - T_{a \sharp b}   \rA_{H^{\mu}\rightarrow H^{\mu-m-m'+\rho}}
\le 
K M_{\rho}^{m}(a)M_{0}^{m'}(b)+K M_{0}^{m}(a)M_{\rho}^{m'}(b).
\end{equation}
$(iii)$ Let~$a\in \Gamma^{m}_{\rho}({\mathbf{R}}^d)$. Denote by 
$(T_a)^*$ the adjoint operator of~$T_a$ and by~$\overline{a}$ the complex conjugate of~$a$. Then $(T_a)^* -T_{b}$ is of order~$\leo m-\rho$ with
\[
b:=\sum_{|\alpha|<\rho}\frac{(-i)^{\alpha}}{\alpha !}\partial_{\xi}^{\alpha}\partial_x^\alpha \bar a(x, \xi).
\]
Moreover, for all~$\mu$ there exists a constant~$K$ such that
\begin{equation}\label{esti:quant3}
\lA (T_a)^*   - T_b   \rA_{H^{\mu}\rightarrow H^{\mu-m+\rho}}\le 
K M_{\rho}^{m}(a).
\end{equation}
\end{theo}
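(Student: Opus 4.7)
The plan is to prove this via a standard Littlewood-Paley patching argument that reduces the full paradifferential Strichartz estimate to the dyadic homogeneous estimate already established in Proposition \ref{Str:uj}. Concretely, the proof unfolds in four steps: dyadic reduction, verification of regularity hypotheses, Duhamel plus error control, and summation.

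First, I would apply the Littlewood-Paley decomposition to $u$, noting that $\Delta_j u$ solves $\mathcal{L}_j\Delta_j u=F_j$ with $F_j=F_j^1+R_j+F_j^3$ as defined in \eqref{Fj1}, \eqref{Fj2}, \eqref{Fj}. By a judicious choice of $c_1$, each $F_j$ is spectrally localized in the annulus $\frac{1}{h}\Cr^1$, which is the frequency support required to invoke the dyadic Strichartz estimate. Next, I would verify that the regularity assumptions of the theorem imply condition \eqref{condition:V,gamma}: the hypothesis $V\in L^\infty(I;W^{5/3+\eps_0,\infty})$ (resp.\ $W^{2,\infty}$) corresponds exactly to $1+r_0$ with the choice \eqref{choose:r0r1}; and $\eta\in L^\infty(I;W^{5/3,\infty})$ transfers to $\gamma(\cdot,\xi)\in L^\infty(I;W^{2/3,\infty})$ because $\gamma=\sqrt{a\lambda}$ is essentially a smooth function of $\nabla\eta$ (so one derivative is consumed in the transfer).

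Third, I would apply Duhamel's formula in conjunction with Proposition \ref{Str:uj} to each dyadic piece, obtaining
\[
\lA \Delta_j u\rA_{L^q(I;W^{s_0-\frac{d}{2}+\mu,\infty})}\les \lA \Delta_j u^0\rA_{H^{s_0}}+\lA F_j\rA_{L^1(I;H^{s_0})}
\]
with $(q,\mu)=(4,\frac{1}{8})$ when $d=1$ and $(q,\mu)=(2+\eps,\frac{1}{4}-\eps)$ when $d=2$. Then I would bound each piece of $F_j$ at the $H^{s_0}$ level: the commutator terms in $F_j^1$ are controlled by Theorem \ref{theo:sc} by $\lA \widetilde\Delta_j u\rA_{H^{s_0}}$; the term $R_j$ is controlled by \eqref{bound:Rj}; and the smoothing-difference term $F_j^3$ is handled via \eqref{Sja:a}, where the loss at scale $\delta j=\frac{3j}{4}$ is compensated precisely by the $W^{2/3,\infty}$ regularity of $\gamma$ (for the $\gamma$-piece) and the $W^{4/3,\infty}$ regularity of $V$ (for the $V$-piece), yielding a bound by $\lA \Delta_j u\rA_{H^{s_0}}+\lA \Delta_j f\rA_{H^{s_0}}$.

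Finally, I would sum the dyadic estimates using the triangle inequality in the $W^{s_0-d/2+\mu,\infty}$ norm (which is a Besov-type norm summed over $j$), absorbing the $\lA u\rA_{L^1 H^{s_0}}$ contribution by the standard paradifferential energy estimate $\lA u\rA_{L^\infty H^{s_0}}\les \lA u^0\rA_{H^{s_0}}+\lA f\rA_{L^1 H^{s_0}}$. The main obstacle is the bookkeeping in step three: one must confirm that the derivative loss from the smoothing parameter $\delta=\frac{3}{4}$ in $F_j^3$ matches exactly the Hölder thresholds $\frac{5}{3}+\eps_0$, $2$, $\frac{5}{3}$ assumed in the statement, so that the argument closes with no residual loss of derivatives beyond $s_0$ on the right-hand side.
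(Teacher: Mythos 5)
Your proposal does not address the statement at hand. Theorem \ref{theo:sc} is the symbolic calculus for paradifferential operators: the Sobolev boundedness of $T_a$, the composition rule $T_aT_b-T_{a\sharp b}$, and the adjoint rule $(T_a)^*-T_b$, with operator norms controlled by the semi-norms $M^m_\rho$. What you have written is instead a sketch of the proof of the final theorem of Section 4 (the Strichartz estimate for solutions of $\partial_t u+T_V\cdot\nabla u+iT_\gamma u=f$): Littlewood--Paley reduction to $\mathcal{L}_j\Delta_ju=F_j$, verification of \eqref{condition:V,gamma}, Duhamel plus Proposition \ref{Str:uj}, and dyadic summation. None of that machinery (dispersive estimates, flow straightening, the choice $\delta=\tfrac34$) is relevant to proving mapping properties of $T_a$. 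Worse, your third step explicitly invokes Theorem \ref{theo:sc} to control the commutators in $F_j^1$, so read as a proof of Theorem \ref{theo:sc} the argument is circular.

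A genuine proof of Theorem \ref{theo:sc} runs along entirely different lines (this is the classical Bony/M\'etivier argument, which the paper recalls without proof). For $(i)$ one shows that the quantized symbol $\sigma_a(x,\xi)=\chi(D_x,\xi)a(x,\xi)$ lies in $S^m_{1,1}$ and is spectrally localized in $\lB\vert\eta\vert\le\eps_2(1+\vert\xi\vert)\rB$ by \eqref{chi:prop}; the Sobolev boundedness of such spectrally admissible $S^m_{1,1}$ operators (e.g.\ Theorem 4.3.5 in \cite{MePise}, already used in the proof of Proposition \ref{para:pseudo}) then gives \eqref{esti:quant1}. For $(ii)$ and $(iii)$ one Taylor-expands the symbol of the composition (resp.\ adjoint) to order $[\rho]$ and estimates the remainder: the $W^{\rho,\infty}$ regularity in $x$ of $a$ and $b$ is exactly what yields the gain of $\rho$ derivatives, with the remainder again a spectrally admissible symbol of order $m+m'-\rho$ (resp.\ $m-\rho$). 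If you intend to rely on this theorem as a black box, that is legitimate --- the paper does the same --- but then you must say so rather than substitute an unrelated argument.
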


\end{document}